\theoremstyle{plain}
\newtheorem{thm}{Theorem}[section]
\newtheorem{prop}[thm]{Proposition}
\newtheorem{lem}[thm]{Lemma}
\newtheorem{cor}[thm]{Corollary}
\newtheorem{rmk}[thm]{Remark}
\begin{document}
\newcommand{\lip}{\textnormal{Lip}(S^1)}
\newcommand{\Lip}{\textnormal{Lip}(\sfe)}
\newcommand{\conv}{\textnormal{conv}}
\newcommand{\supp}{\textnormal{supp}}
\newcommand{\sfe}{{S^{n-1}}}
\newcommand{\R}{{\mathbb R}}
\newcommand{\N}{{\mathbb N}}
\renewcommand{\O}{{\mathbf O}}
\newcommand{\K}{{\mathcal K}}

\let\tmp\oddsidemargin
\let\oddsidemargin\evensidemargin
\let\evensidemargin\tmp
\reversemarginpar
\makeatletter
\def\cleardoublepage{\clearpage\if@twoside\ifodd\c@page\else
\hbox{}
\thispagestyle{empty}
\newpage
\if@twocolumn\hbox{}\newpage\fi\fi\fi}
\makeatother

\title{Dot product invariant valuations on $ \Lip $}
\date{}
\author{Andrea Colesanti, Daniele Pagnini, Pedro Tradacete, Ignacio Villanueva}
\maketitle
\section*{Abstract}
We provide an integral representation for continuous, rotation invariant and dot product invariant
valuations defined on the space $ \Lip $ of Lipschitz continuous functions on the unit $ n-$sphere.
\tableofcontents
\numberwithin{equation}{section}
\section{Introduction}

A \textit{valuation} on a family $ \mathcal{F} $ of sets is a function $ \nu:\mathcal{F}\rightarrow
\mathbb{R} $ such that
\begin{equation}\label{intro 1}
\nu(A\cup B)+\nu(A\cap B)=\nu(A)+\nu(B), 
\end{equation}
for every $ A,B\in\mathcal{F} $ with $ A\cup B,A\cap B\in\mathcal{F}$. The previous relation has a clear geometric meaning, being in fact a finite additivity condition. In particular, roughly speaking, every measure is a valuation.
Of particular interest (in general and for the purposes of the present paper), is the theory of valuations on the family
${\mathcal K}^n$ of {\em convex bodies} of $\R^n$ (a convex body is a compact convex subset of $\R^n$). The importance of this theory is due not only to its role in the solution of Hilbert's third problem, but also to the beauty and the profundity of some of the results that it comprises. We mention, as instances, the Hadwiger characterisation of rigid motion invariant valuations, the McMullen homogeneous decomposition for translation invariant valuations, and the Alesker irreducibility theorem, along with its consequences. For details concerning these results we refer to Chapter 6 of the monograph \cite{Schneider} by Schneider, which contains an up-to-date account on the state of the art of this theory.

The notion of valuation can be transferred to a different context, where the domain is a family of functions. Let $X$ be a space of real-valued functions; a {\em valuation} on $ X $ is a functional 
$\mu:X\rightarrow\mathbb{R}$ such that
\begin{equation}\label{intro 2}
\mu(u\vee v)+\mu(u\wedge v)=\mu(u)+\mu(v), 
\end{equation}
for every $ u,v\in X $ such that $ u\vee v,u\wedge v\in X $, where $ \vee $ and $ \wedge $ denote the pointwise maximum and pointwise minimum, respectively. The fact that \eqref{intro 2} is the natural functional counterpart of \eqref{intro 1}
can be motivated, for example, observing that if $I_A$ denotes the characteristic function of a general set $A$, then
$$
I_{A_1\cup A_2}=I_{A_1}\vee I_{A_2}\quad\mbox{and}\quad
I_{A_1\cap A_2}=I_{A_1}\wedge I_{A_2}.
$$

The study of valuations on spaces of functions started quite recently, mainly under the impulse of the rich theory of valuations on convex bodies. A branch of this area of research is focused on spaces of functions related to convexity,
such as convex functions (see \cite{Alesker-17, Cavallina/Colesanti, Colesanti-Ludwig-Mussnig-1, Colesanti-Ludwig-Mussnig-2, Colesanti-Ludwig-Mussnig-3}), log-concave functions (see \cite{Mussnig, Mussnig-2}), quasi-concave functions (see \cite{ColesantiLombardi, Colesanti-Lombardi-Parapatits}). 

A different line of investigation in the theory of valuations on function spaces traces back to the papers \cite{Klain-1, Klain-2} by Klain, concerning valuations on star-shaped sets. A subset $S$ of $\R^n$ is {\it star-shaped} (with respect to the origin) if for every $x\in S$, the segment joining $x$ and the origin is contained in $S$. To a star-shaped set $S$ we can associate its {\em radial function} $\rho_S\colon\sfe\to[0,+\infty)$ defined by:
$$
\rho_S(u)=\sup\{\lambda\ge0\colon \lambda u\in S\}.
$$
Note that for every star-shaped sets $S_1$ and $S_2$ we have
\begin{equation}\label{intro 3}
\rho_{S_1\cup S_2}=\rho_{S_1}\vee\rho_{S_2}\quad\mbox{and}\quad
\rho_{S_1\cap S_2}=\rho_{S_1}\wedge\rho_{S_2}.
\end{equation}

In \cite{Klain-1, Klain-2}, Klain considered the family $\mathcal F$ of star-shaped sets having radial functions of class $L^n(\sfe)$, endowed with the topology induced by the $L^n(\sfe)$ norm. He obtained a complete characterisation of continuous and rotation invariant valuations $\nu$ on $\mathcal F$, proving that they are all of the form
$$
\nu(S)=\int_{\sfe}F(\rho_S)dH^{n-1},\quad\forall\, S\in{\mathcal F},
$$
where $F$ is a function in $C(\R)$ verifying a suitable growth condition at infinity. 

The family $\mathcal F$ can be identified with $L^n_+(\sfe)$ (non-negative functions in $L^n(\sfe)$) and, in view of \eqref{intro 3}, to every valuation on $\mathcal F$ it corresponds a valuation on $L^n_+(\sfe)$.  Therefore the result of Klain can be rephrased as a characterisation of continuous and rotation invariant valuations on $L^n_+(\sfe)$. In this spirit, Tsang in \cite{Tsang} extended the result of \cite{Klain-1, Klain-2} to the spaces $L^p(\sfe)$, with $1\le p<\infty$ 
(see also \cite{Tradacete/Villanueva3}). 

As a natural continuation of the previous results, the third and fourth author established in \cite{Villanueva} and 
\cite{Tradacete/Villanueva1} a characterisation of continuous and rotation invariant valuations defined on star-shaped sets with continuous radial function (see also \cite{Tradacete/Villanueva2} for further extensions). The functional counterpart of their results reads as follows.

\begin{thm}[{\bf Tradacete, Villanueva}] A function $\mu\colon C(\sfe)\to\R$ is a continuous (w.r.t. uniform convergence) and rotation invariant valuation if and only if there exists $F\in C(\R)$ such that 
$$
\mu(u)=\int_{\sfe} F(u)dH^{n-1},
$$
for every $u\in C(\sfe)$.  
\end{thm}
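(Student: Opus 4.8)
The plan is as follows. The ``if'' direction is routine: if $\mu(u)=\int_{\sfe}F(u)\,dH^{n-1}$ with $F\in C(\R)$, then at every point $x$ the unordered pair $\{(u\vee v)(x),(u\wedge v)(x)\}$ coincides with $\{u(x),v(x)\}$, so $F(u\vee v)+F(u\wedge v)=F(u)+F(v)$ pointwise and integration gives \eqref{intro 2}; rotation invariance is inherited from $H^{n-1}$, and continuity follows because a uniformly convergent sequence takes values in a fixed compact interval on which $F$ is uniformly continuous. For the converse, I would first reduce to the case $\mu(0)=0$ (the functional $\mu-\mu(0)$ is still a continuous rotation invariant valuation, the constant cancelling in \eqref{intro 2}, and recovering $\mu$ only adds a constant to $F$), set $L:=H^{n-1}(\sfe)$, and \emph{define} the candidate $F\in C(\R)$ by $F(t):=\mu(t\mathbf 1)/L$ ($t\mathbf 1$ the constant function); continuity of $\mu$ makes $F$ continuous and $F(0)=0$. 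It then remains to prove $\mu(u)=\int_{\sfe}F(u)\,dH^{n-1}$ for all $u\in C(\sfe)$.

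The first main step is to reduce to the \emph{distribution} of $u$. For a hyperplane $H$ through the origin let $\sigma_H$ be the induced isometry of $\sfe$ and $u^H$ the polarization of $u$ with respect to $H$; writing $u_\sigma:=u\circ\sigma_H$, one verifies the pointwise identities $u^H\vee(u^H)_\sigma=u\vee u_\sigma$ and $u^H\wedge(u^H)_\sigma=u\wedge u_\sigma$. Applying \eqref{intro 2} to the pairs $(u,u_\sigma)$ and $(u^H,(u^H)_\sigma)$ and using rotation invariance to cancel $\mu(u_\sigma)=\mu(u)$ and $\mu((u^H)_\sigma)=\mu(u^H)$ yields $2\mu(u^H)=\mu(u\vee u_\sigma)+\mu(u\wedge u_\sigma)=2\mu(u)$, i.e.\ $\mu$ is invariant under every polarization. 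Since for each continuous $u$ some sequence of polarizations has iterates converging \emph{uniformly} to the spherically symmetric non-increasing rearrangement $u^{*}$ (the iterates form an equicontinuous family, because polarization preserves moduli of continuity, so a known $L^p$-approximation result upgrades to uniform convergence), continuity forces $\mu(u)=\mu(u^{*})$. Hence $\mu(u)$ depends only on the non-increasing rearrangement $\psi_u\in C([0,L])$ of $u$ (which is continuous, since a jump of $\psi_u$ would make $u^{-1}((t_1,t_2))$ nonempty and $H^{n-1}$-null for a suitable subinterval of the range of $u$), and by equimeasurability $\int_{\sfe}F(u)\,dH^{n-1}=\int_0^L F(\psi_u)\,ds$; so it suffices to show $\mu(u)=\int_0^L F(\psi_u)\,ds$.

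The second, and substantive, step is to establish this integral formula. Because $\mu(u)=\mu(u^{*})$, the valuation identity can be fed equidistributed functions at will; in particular, if $u,v\ge c$ agree with a constant $c$ off two disjoint spherical caps (placed by a rotation), then $u\vee v$ is their ``merge'', $u\wedge v=c\mathbf 1$, and \eqref{intro 2} gives an exact additivity over such disjoint spatial pieces, while rotation invariance transports and permutes them. Combining this with continuity and solving the resulting Cauchy-type functional equation --- in the measure of a ``plateau'' and in its height --- I would show that a plateau of height $a$ over background $c$ contributes, relative to $\mu(c\mathbf 1)$, exactly $(F(a)-F(c))$ times its measure, whence $\mu$ agrees with $\int_{\sfe}F(u)\,dH^{n-1}$ on a uniformly dense class of functions; since both sides are uniformly continuous they agree on all of $C(\sfe)$. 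Testing on constants shows the extracted density is precisely the $F$ fixed at the outset, which completes the argument.

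The hard part will be the second step: turning the modular identity plus uniform continuity into a bona fide integral representation in one variable. The delicate point is that the uniform topology is rigid --- the most natural approximating families (plateaus with vanishing transition zones, or many small bumps) do \emph{not} converge uniformly --- so the Cauchy/density extraction has to be carried out inside equicontinuous families, for instance functions piecewise linear with respect to a triangulation of $\sfe$, which are uniformly dense and, up to refinement, stable under $\vee$ and $\wedge$; one must control the contribution of the transition regions carefully so that the accumulated errors vanish in the limit, and only then can the density be extracted and identified with $F$. A secondary input deserving care is the uniform (rather than merely $L^p$) convergence of iterated polarizations used in the first step, which is what makes that step compatible with the continuity hypothesis.
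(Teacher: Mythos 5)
First, a point of order: the paper does not prove this theorem. It is quoted in the introduction as a known result from the cited works of Villanueva and Tradacete--Villanueva, so there is no in-paper proof to compare yours against; I am assessing your argument on its own terms.

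Your first reduction is correct and genuinely elegant. The pointwise identities $u^H\vee(u^H)_\sigma=u\vee u_\sigma$ and $u^H\wedge(u^H)_\sigma=u\wedge u_\sigma$ do hold, reflections belong to $\O(n)$, polarization does not increase the modulus of continuity (so the upgrade from $L^p$- to uniform convergence of iterated polarizations via equicontinuity is legitimate), and hence $\mu(u)=\mu(u^*)$: the valuation factors through the decreasing rearrangement. The "if" direction and the choice $F(t)=\mu(t\mathbf 1)/H^{n-1}(\sfe)$ are also fine. The genuine gap is your second step, which is where essentially all of the content of the theorem lives and which you only outline. The Cauchy-equation mechanism you propose requires comparing $\mu$ on a "merged" two-plateau function with $\mu$ on a single plateau of the combined measure; these two functions are equimeasurable only up to the transition annuli, and on those annuli they differ by a quantity of order $|a-c|$, which is \emph{not} small in the uniform norm, only supported on a set of small measure. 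Since $\mu$ is assumed continuous only with respect to uniform convergence, you have no a priori bound on $|\mu(u)-\mu(v)|$ for uniformly bounded $u,v$ that agree off a set of small $H^{n-1}$-measure, and without such a bound the "accumulated errors" you mention cannot be shown to vanish, nor can the approximate additivity in the plateau measure be converted into an exact Cauchy equation. Producing exactly this kind of quantitative control (variation of the valuation over functions coinciding outside a set of small measure, within a fixed sup-norm ball) is the central technical achievement of the cited papers; it does not follow from continuity plus the valuation property by any soft argument, and in its absence your step two remains a plan rather than a proof. You have correctly diagnosed where the difficulty sits, but diagnosing it is not the same as resolving it.
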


In the present paper we consider valuations defined on a smaller space, namely the space $\Lip$ consisting of Lipschitz functions on $\sfe$ (equipped with a topology $\tau$ defined in section \ref{section Preliminaries}). The main novelty of this space is that its elements are differentiable $H^{n-1}$-a.e. on $\sfe$ (by Rademacher's theorem), and therefore we expect that derivatives will come into play. Given $u\in\Lip$, we denote by $\nabla u(x)$ the spherical gradient of $u$ at a point $x\in\sfe$, if $u$ is differentiable at $x$ (see section \ref{section Preliminaries} for definitions).
  
It can be shown that if $F\colon\R\times\R^n\times\sfe\to\R$ is continuous, then the application $\mu\colon\Lip\to\R$ defined by
$$
\mu(u)=\int_{\sfe}F(u(x),\nabla u(x),x)dH^{n-1}(x),\quad\forall\,u\in\Lip,
$$
is a continuous valuation. If we consider, as a special case, functionals of the form
\begin{equation}\label{intro 5}
\mu(u)=\int_{\sfe}F(u(x),\|\nabla u(x)\|)dH^{n-1}(x),\quad\forall\,u\in\Lip,
\end{equation}
with $F\in C(\R\times[0,\infty))$, then we have a whole family of continuous and rotation invariant valuations on $\Lip$. 
We are then led to the problem of characterising all possible continuous and rotation invariant valuations on $\Lip$.

Here we solve a special case of this problem, namely we characterise valuations $\mu$ which are additionally {\em dot product invariant}, i.e. invariant under the addition of linear functions: 
\begin{equation}\label{intro 6}
\mu(u+l)=\mu(u)
\end{equation}
for every $u\in\Lip$ and for every $l\colon\sfe\to\R$ which is the restriction to $\sfe$ of a linear function on $\R^n$.  

We can give a geometric interpretation of the previous assumption. Note that $\Lip$ contains the family ${\mathcal H}(\sfe)$ of {\em support functions} of convex bodies  (see section \ref{section Preliminaries} for definitions). If $h\in{\mathcal H}(\sfe)$ is the support function of a convex body $K$, and $l$ is the restriction to $\sfe$ of a linear function, then $h+l$ is the support function of a translated copy of $K$. Hence if $\mu$ is dot product invariant, its  restriction to support functions is ``translation invariant''. This observation is crucial for the proof of our characterisation result; indeed, dot product and rotation invariance will allow us to apply the McMullen homogeneous decomposition and the Hadwiger characterisation theorem to the restriction to ${\mathcal H}(\sfe)$ of the valuations under consideration. In particular we may say that our argument relies heavily on the theory of valuations on convex bodies. 

The main result of this paper is the following theorem.

\begin{thm}\label{characterization result}
A functional $ \mu:\Lip\rightarrow\mathbb{R} $ is a continuous, rotation invariant and dot
product invariant valuation if and only if there exist constants $ c_0,c_1,c_2\in\mathbb{R} $ such that
\begin{equation}\label{representation formula}
\mu(u)=c_0+c_1\int_{\sfe}u(x)dH^{n-1}(x)+c_2\int_{\sfe}\left[(n-1)u^2(x)-\|\nabla u(x)\|^2\right]
dH^{n-1}(x),
\end{equation}
for every $ u\in\Lip $.
\end{thm}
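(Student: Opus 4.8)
The "if" direction is straightforward: one checks that each of the three terms in \eqref{representation formula} is a continuous valuation (the integral functionals are valuations because $(u\vee v)+(u\wedge v)=u+v$ pointwise and the integrand splits accordingly on the sets $\{u\ge v\}$ and $\{u<v\}$, using that $\nabla u=\nabla v$ a.e.\ on $\{u=v\}$), that each is rotation invariant (the Hausdorff measure and $\|\nabla u\|$ are rotation-equivariant), and that each is dot product invariant. For the last point, $c_0$ and the constant-in-$u$ structure are trivial, $\int_{\sfe}l\,dH^{n-1}=0$ for $l$ linear by symmetry, and the quadratic term is dot product invariant because for $h\in{\mathcal H}(\sfe)$ the functional $h\mapsto\int_{\sfe}[(n-1)h^2-\|\nabla h\|^2]dH^{n-1}$ is (up to a constant) the mean width squared minus a multiple of a quermassintegral — more concretely, a direct computation using integration by parts on $\sfe$ shows that adding a linear $l$ changes the integrand by terms that integrate to zero. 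This direction I expect to be essentially routine.

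The substance is the "only if" direction. The plan is to exploit the hint given in the introduction: restrict $\mu$ to the cone ${\mathcal H}(\sfe)$ of support functions of convex bodies. Via the identification $h_K\leftrightarrow K$, a dot product invariant, rotation invariant, continuous valuation on $\Lip$ restricts to a translation and rotation invariant, continuous valuation on ${\mathcal K}^n$ (one must check that the lattice operations match up — $h_K\vee h_L=h_{\conv(K\cup L)}$, and $h_K\wedge h_L$ need not be a support function, but the identity \eqref{intro 2} only needs to be used when both $u\vee v$ and $u\wedge v$ lie in the class, so one works with those pairs $K,L$ for which $h_K\wedge h_L\in{\mathcal H}(\sfe)$; a standard argument shows Hadwiger still applies). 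Hadwiger's characterisation then gives that $\mu|_{{\mathcal H}(\sfe)}$ is a linear combination of the intrinsic volumes $V_0,\dots,V_n$. Dot product invariance combined with McMullen's homogeneous decomposition, together with the continuity and the behaviour under the degenerate rescalings $h\mapsto \lambda h$ available inside $\Lip$, should force all but the terms of degree $0,1,2$ to vanish: indeed a valuation on $\Lip$ that is continuous and whose restriction to ${\mathcal H}(\sfe)$ is homogeneous of degree $k$ must, by a limiting/approximation argument letting convex bodies degenerate, be killed unless $k\le 2$ — this is where the specific space $\Lip$ (rather than all of $C(\sfe)$) and the role of the gradient enter. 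So on ${\mathcal H}(\sfe)$ we get $\mu(h)=c_0 V_0+c_1' V_1(K)+c_2' V_2(K)$, and $V_1,V_2$ have exactly the integral representations appearing in \eqref{representation formula} (mean width and the integral of $(n-1)h^2-\|\nabla h\|^2$, which is a constant multiple of $V_2$).

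The final and hardest step is to propagate the formula from ${\mathcal H}(\sfe)$ to all of $\Lip$. The strategy I would follow: first show that differences of support functions, i.e.\ ${\mathcal H}(\sfe)-{\mathcal H}(\sfe)$, are dense in $\Lip$ with respect to the topology $\tau$ (restrictions of $C^2$ functions are of this form, being differences of support functions of bodies with large curvature, and $C^2$ functions are $\tau$-dense). Then one extends the candidate functional
$$
\mu_0(u)=c_0+c_1\int_{\sfe}u\,dH^{n-1}+c_2\int_{\sfe}\bigl[(n-1)u^2-\|\nabla u\|^2\bigr]dH^{n-1}
$$
and shows $\mu-\mu_0$ is a continuous, rotation and dot product invariant valuation vanishing on ${\mathcal H}(\sfe)$; by valuation additivity and the inclusion-exclusion expansion along chains of maxima/minima of support functions, it vanishes on ${\mathcal H}(\sfe)-{\mathcal H}(\sfe)$, hence on all of $\Lip$ by continuity and density. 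The main obstacle I anticipate is precisely this density/extension step: controlling the valuation on the full Lipschitz space from its values on (differences of) support functions requires a careful approximation argument compatible with the topology $\tau$ and with the $\vee,\wedge$ operations — one cannot simply invoke linearity since $\mu$ is only a valuation, not linear, so one needs an inclusion-exclusion representation of a general Lipschitz function as an alternating combination of maxima of support functions together with a continuity estimate. Handling the quadratic (degree-two) term correctly under these manipulations, and ensuring the gradient terms behave well under the lattice operations on a set of measure zero, is the delicate point.
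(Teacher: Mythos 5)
Your overall architecture matches the paper's: restrict to $\mathcal H(\sfe)$, apply Hadwiger, kill the homogeneity degrees $\ge 3$, and propagate back to $\Lip$ by density. However, the two steps you flag as "where the substance is" are precisely the ones you have not actually supplied, and in one of them your proposed route would not work as stated.

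First, the assertion that a continuous, rotation and dot product invariant, $k$-homogeneous valuation on $\Lip$ must vanish for $k\ge3$ is not something that follows from "letting convex bodies degenerate" in any soft way; it is the technical heart of the paper (Proposition \ref{homogeneous valuations on Lip}). The actual argument requires an explicit construction: one takes the support functions of $(k-1)$-dimensional disks $D_{\overline\xi}$, forms $w_{\lambda,\overline\xi}=(\lambda h_{D_{\overline\xi}}-\langle\cdot,\overline\xi\rangle)\wedge\mathbb O$, packs $N_\nu=\nu^{k-1}$ such functions with disjoint supports on the sphere, and rescales by $\nu^{-p}$ with $1<p<(2k-2)/k$ so that the resulting minima $\psi_\nu$ converge to $\mathbb O$ in $\tau$ (this needs a uniform Lipschitz, hence gradient, bound) while $\mu(\psi_\nu)=-c_k\nu^{2k-2-kp}\to-\infty$. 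The existence of an exponent $p$ in that window is exactly what distinguishes $k\ge3$ from $k\le2$; without exhibiting such a mechanism your claim is unsupported. Moreover, to even apply such an argument you need the homogeneous components $\mu_i$ to be defined as continuous valuations on all of $\Lip$ (the degenerating functions are not support functions), which requires extending McMullen's decomposition from $\mathcal K^n$ to $\Lip$ — the paper does this via a Vandermonde inversion of the system $\mu(kh)=\sum_i k^i\mu_i(h)$; you do not address this.

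Second, your propagation step via the density of $\mathcal H(\sfe)-\mathcal H(\sfe)$ is the wrong notion of density for a valuation: $\mu$ is not additive, so its values on $\mathcal H(\sfe)$ give no control over its values on additive differences $h_K-h_L$, and the inclusion–exclusion principle for valuations operates on the lattice operations $\vee,\wedge$, not on subtraction. You half-recognise this ("one needs an inclusion–exclusion representation ... as an alternating combination of maxima"), but the concrete object that makes this work is missing: the paper shows that every piecewise linear function on $\R^n$ is a finite \emph{minimum} of support functions (by bounding $f-L_i$ above by a multiple of the distance function to the cone $C_i$, which is itself a support function), that agreement on $\mathcal H(\sfe)$ propagates to finite minima by inclusion–exclusion, and that piecewise linear functions are $\tau$-dense in $\Lip$ (via simplicial subdivisions of $\partial[-1,1]^n$ for $C^1$ functions, then mollification). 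Without identifying a $\tau$-dense class that is reachable from $\mathcal H(\sfe)$ by lattice operations, the final step of your plan does not close.
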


This result indicates that dot product invariance is a very strong condition. Indeed, the space of continuous, rotation and dot product invariant valuations has finite dimension, similarly to what happens for continuous and rigid motion invariant valuations on convex bodies, by Hadwiger's theorem. This is far from being true without dot product invariance, as shown by the class of examples \eqref{intro 5}. 

The right hand side of \eqref{representation formula} deserves some explanation. First note that, as
a function of $ u $, it is a continuous and rotation invariant valuation, being of the form \eqref{intro 5}. Concerning dot product invariance, the constant term $c_0$ is clearly invariant, the second addend is invariant as well, as the integral over the sphere of the restriction to $\sfe$ of a linear function is zero. Moreover, if $u$ is sufficiently regular, by the divergence theorem we have
$$
\int_{\sfe}\left[(n-1)u^2(x)-\|\nabla u(x)\|^2\right]
dH^{n-1}(x)=\int_{\sfe}u(x)[(n-1)u(x)+\Delta u(x)]dH^{n-1}(x),
$$
where $\Delta u$ denotes the spherical Laplacian of $u$. Dot product invariance of this term follows now from the fact that restrictions to $ \sfe $ of linear functions are eigenfunctions of the $\Delta$ operator, with eigenvalue $-(n-1)$.  

We also remark that if $u=h\in{\mathcal H}(\sfe)$ is the support function of a convex body $K$, then the right hand side of \eqref{representation formula} is just a linear combination of $V_0(K)$, $V_1(K)$ and $V_2(K)$, the first three intrinsic volumes of $K$. Therefore \eqref{representation formula} appears as a truncated version of Hadwiger's theorem; the reason why the remaining intrinsic volumes are not involved is that they do not admit an extension from ${\mathcal H}(\sfe)$ to $\Lip$. This will be clarified in the proof of Theorem \ref{characterization result} (see in particular Proposition \ref{homogeneous valuations on Lip}). 

The rest of the paper is devoted to the proof of Theorem \ref{characterization result}. However,
we will obtain in section \ref{section Approximation} an approximation result along the way (see Proposition
\ref{valuations are determined by their values on support functions}) which could be used to deal
also with valuations which are not (rotation invariant nor) dot product invariant, and, in section 
\ref{section The homogeneous decomposition}, a homogenous decomposition
for continuous and dot-product invariant valuations on $\Lip$.

\section{Preliminaries}\label{section Preliminaries}
\subsection{Basic notions}
For $ n\in\mathbb{N} $, $ n\geq 2 $, we denote by $ \sfe $ the unit $ n-$sphere, that is,
$$ \sfe=\left\{x\in\mathbb{R}^n:\|x\|=1\right\}, $$
where $ \|\cdot\| $ represents the Euclidean norm. We will use the $ (n-1)-$dimensional Hausdorff
measure $ H^{n-1} $ on the sphere. 

Even though we will mainly be interested in what happens on $ \sfe $, it will often be useful to
reason on the whole space $ \mathbb{R}^n $, where we will use the standard $ n-$dimensional
Lebesgue measure, so that every ``a.e.'' referred to functions defined on $ \mathbb{R}^n $ is to be
understood with respect to said measure. The standard basis of $ \mathbb{R}^n $ will be denoted by
$ \{e_1,\ldots,e_n\} $.

Let $ \Lip $ be the space of Lipschitz continuous maps defined on $ \sfe $, i.e., the set of
functions $ u:\sfe\rightarrow\mathbb{R} $ for which there exists a constant $ L\geq 0 $ such that
$$ \vert u(x)-u(y)\vert\leq L\|x-y\|, $$
for every $ x,y\in \sfe $. The smallest constant for which this inequality holds is called the
\textit{Lipschitz constant} associated with $ u $ and is denoted by $ L(u)$:
$$
L(u)=\sup\left\{
\frac{|u(x)-u(y)|}{\|x-y\|}\colon x,y\in \sfe,\, x\ne y
\right\}.
$$

Let $u\in\Lip$; as an application of the Rademacher theorem, $u$ is differentiable $H^{n-1}$-a.e on $\sfe$. Let $x\in\sfe$ be a point of differentiability; the differential of $u$ at $x$ is a linear application from $T_x(\sfe)$, the tangent space to $\sfe$ at $x$, to $\R$, and hence it can be represented by a vector $\nabla u(x)\in T_x(\sfe)$ that we will call the
{\em spherical gradient} of $u$ at $x$.

Let us consider on $ \Lip $ the topology $ \tau $ induced by the following convergence: we say that a sequence
$\{u_i\colon i\in\N\}\subseteq\Lip $ converges to $ u\in\Lip $ with respect to $ \tau $, in symbols $ u_i
\xrightarrow[\tau]{}u $, as $ i\rightarrow\infty $, if
\begin{enumerate}
\item $ \|u_i-u\|_\infty\rightarrow 0 $, that is, $ u_i\rightarrow u $ uniformly on $ \sfe $;
\item $ \nabla u_i(x)\rightarrow\nabla u(x) $ for $ H^{n-1}$-a.e. $ x\in\sfe $; 
\item there exists a suitable constant $ C>0 $ such that
$$ 
\|\nabla u_i(x)\|\leq C, 
$$
for every $ i\in\mathbb{N} $ and $ H^{n-1}$-a.e. $ x\in \sfe $.
\end{enumerate}

A functional $ \mu:\Lip\rightarrow\mathbb{R} $ is said to be a \textit{valuation} if
\begin{equation}\label{definition of valuation}
\mu(u\vee v)+\mu(u\wedge v)=\mu(u)+\mu(v),
\end{equation}
for every $ u,v\in\Lip $, where $ \vee $ and $ \wedge $ are the pointwise maximum and pointwise
minimum respectively. Note that, since $ \Lip $ is closed with respect to these operations, all the
terms in \eqref{definition of valuation} are well defined.

We will say that a valuation $ \mu:\Lip\rightarrow\mathbb{R} $ is \textit{continuous} if it is continuous
with respect to the topology $ \tau $, unless otherwise stated.

Besides continuity, we will be interested in other properties, namely the rotation invariance, dot
product invariance and homogeneity. These concepts are defined below.

A valuation $ \mu:\Lip\rightarrow\mathbb{R} $ is \textit{rotation invariant} if for every $ u\in\Lip $
and $ \varphi\in\O(n) $ we have
$$ \mu(u\circ\varphi)=\mu(u), $$
where $ \O(n) $ is the orthogonal group. 

$\mu$ is called \textit{dot product invariant} if, for every
$ u\in\Lip $ and $ x\in\mathbb{R}^n $,
$$ \mu(u+\langle\cdot,x\rangle)=\mu(u), $$
where $ \langle\cdot,\cdot\rangle $ denotes the standard scalar product in $\R^n$. In other words, $\mu$ is 
dot product invariant if it is invariant under the addition of linear functions restricted to $\sfe$. 

For $\alpha\ge0$, $\mu$ is \textit{$\alpha$-homogeneous} if
$$ 
\mu(\lambda u)=\lambda^\alpha\mu(u), 
$$
for every $ \lambda>0 $ and $ u\in\Lip $.

\bigskip

We will also work with valuations defined on the space $ \mathcal{K}^n $ of {\em convex bodies} of
$ \mathbb{R}^n $, namely compact and convex subsets of $\R^n$. We recall some definitions in
this context.

A \textit{valuation} on $ \mathcal{K}^n $ is a function $ \nu:\mathcal{K}^n\rightarrow\mathbb{R} $
such that
$$ \nu(K\cup L)+\nu(K\cap L)=\nu(K)+\nu(L), $$
for every $ K,L\in\mathcal{K}^n $ satisfying $ K\cup L\in\mathcal{K}^n $. Such $ \nu $ is
\textit{rotation invariant} if
$$ \nu(\varphi(K))=\nu(K), $$
for every $ K\in\mathcal{K}^n $ and $ \varphi\in\O(n) $. It is called \textit{translation
invariant} if
$$ \nu(K+x)=\nu(K), $$
for every $ K\in\mathcal{K}^n $ and $ x\in\mathbb{R}^n $. For $ 0\leq i\leq n $, $ \nu $ is
\textit{$ i-$homogeneous} if
$$ \nu(\lambda K)=\lambda^i\nu(K), $$
for every $ \lambda>0 $ and $ K\in\mathcal{K}^n $.

The \textit{Hausdorff metric} on $ \mathcal{K}^n $ is defined by
$$ d_H(K,L)=\max\left\{\sup_{x\in K}\inf_{y\in L}\|x-y\|,\;\sup_{y\in L}\inf_{x\in K}\|x-y\|\right\}, $$
for every non-empty $ K,L\in\mathcal{K}^n $.

The following set is dense in $ \mathcal{K}^n $ (see e.g. \cite{Schneider}):
$$ 
C^{2,+}=\left\{K\in\mathcal{K}^n:\partial K\in C^2\mbox{ and has strictly positive Gaussian
curvature at every point}\right\}. 
$$

Let us now recall the definition of support function: for every non-empty $ K\in\mathcal{K}^n $, its
\textit{support function} is $ h_K:\mathbb{R}^n\rightarrow\mathbb{R} $ defined by
$$ h_K(x)=\max_{y\in K}\langle x,y\rangle,\;x\in\mathbb{R}^n. $$
Support functions are convex and 1-homogeneous, that is, $ h_K(\lambda x)=\lambda h_K(x) $ for
every $ \lambda>0 $ and $ x\in\mathbb{R}^n $. Moreover, $ \|h_K-h_L\|_\infty=d_H(K,L) $,
for every non-empty $ K,L\in\mathcal{K}^n $.

\bigskip

The notion of piecewise linear function will also be useful. A continuous function $ f:\mathbb{R}^n
\rightarrow\mathbb{R} $ is said to be a \textit{piecewise linear function} if there exist closed convex
cones $ C_1,\ldots,C_m $ with vertex at the origin and pairwise disjoint interiors such that
$$ 
\bigcup_{i=1}^m C_i=\mathbb{R}^n, 
$$
and linear functions $L_i\colon\R^n\to\R$, $i=1,\dots, m$, such that $ f=L_i $ on $ C_i $, for $ i=1,\ldots,m $.

We denote by $ \mathcal{H}(\sfe) $ and $ \mathcal{L}(\sfe) $ the sets of the restrictions to
$ \sfe $ of support functions and piecewise linear functions respectively. When considering these
same functions defined on the whole space $ \mathbb{R}^n $ we will use the symbols
$ \mathcal{H}(\mathbb{R}^n) $ and $ \mathcal{L}(\mathbb{R}^n) $ instead.
Note that, since support functions are convex, they are locally Lipschitz continuous, hence
$ \mathcal{H}(\sfe)\subseteq\Lip $. We also have the inclusion $ \mathcal{L}(\sfe)\subseteq\Lip$.

We introduce one last notation, which will come in handy in the future, denoting by
$$ \widehat{\mathcal{H}}(\sfe)=\left\{\bigwedge_{i=1}^m h_{K_i}:m\in\mathbb{N},\;
h_{K_i}\in\mathcal{H}(\sfe)\mbox{ for }i=1,\ldots,m\right\} $$
the space of finite minima of support functions.

\subsection{The theorems of Hadwiger and McMullen for valuations on convex bodies}
In section \ref{section The homogeneous decomposition} we will prove a McMullen-type
decomposition result for continuous and dot product invariant valuations; we hereby recall the
original McMullen decomposition theorem for valuations defined on $\mathcal{K}^n $, which will be
used in the proof of our result in section \ref{section The homogeneous decomposition}.

\begin{thm}[McMullen, \cite{Schneider}]\label{McMullen}
Let $ \nu:\mathcal{K}^n\rightarrow\mathbb{R} $ be a translation invariant valuation which is 
continuous with respect to the Hausdorff metric. Then there exist continuous and translation invariant
valuations $ \nu_0,\ldots,\nu_n:\mathcal{K}^n\rightarrow\mathbb{R} $ such that $ \nu_i $ is
$ i-$homogeneous, for $ i=0,\ldots,n $, and
\begin{equation}\label{McMullen decomposition formula}
\nu(\lambda K)=\sum_{i=0}^n\lambda^i\nu_i(K),
\end{equation}
for every $ K\in\mathcal{K}^n $ and $ \lambda>0 $.
\end{thm}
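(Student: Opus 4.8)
The plan is to reduce Theorem~\ref{McMullen} to a single polynomiality statement, which is then proved first for polytopes and afterwards for general convex bodies by approximation; the whole theorem follows once one shows that for every $K\in\mathcal{K}^n$ the function $\lambda\mapsto\nu(\lambda K)$, $\lambda>0$, agrees with a polynomial of degree at most $n$.

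\textbf{The key reduction.} Granting the polynomiality above, write $\nu(\lambda K)=\sum_{i=0}^n\nu_i(K)\lambda^i$; the coefficients $\nu_i(K)$ are uniquely determined, and fixing $n+1$ distinct values $\lambda_0,\dots,\lambda_n>0$ one has $(\nu_i(K))_{i=0}^n=V^{-1}\big(\nu(\lambda_k K)\big)_{k=0}^n$, where $V=(\lambda_k^i)_{k,i}$ is the (invertible) Vandermonde matrix. Since each $K\mapsto\nu(\lambda_k K)$ is a continuous valuation — it is the composition of the Hausdorff-continuous map $K\mapsto\lambda_k K$ with $\nu$ — each $\nu_i$, being a fixed linear combination of these, is again a continuous valuation. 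Translation invariance of $\nu_i$ is immediate from $\nu(\lambda(K+t))=\nu(\lambda K+\lambda t)=\nu(\lambda K)$, and $i$-homogeneity follows by matching the coefficients of $\lambda^i$ in $\nu(\lambda(\mu K))=\nu((\lambda\mu)K)=\sum_i\nu_i(K)\mu^i\lambda^i$. This yields \eqref{McMullen decomposition formula} with all the required properties.

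\textbf{Reduction to polytopes.} Polytopes are dense in $\mathcal{K}^n$ for the Hausdorff metric, so it suffices to prove the polynomiality statement, together with the degree bound, for polytopes and then pass to the limit. Given $K$, choose polytopes $P_j\to K$; for the fixed $\lambda_0,\dots,\lambda_n$ above one has $\nu(\lambda_k P_j)\to\nu(\lambda_k K)$ by continuity, hence the coefficient vectors $\big(\nu_i(P_j)\big)_i=V^{-1}\big(\nu(\lambda_k P_j)\big)_k$ converge to some $(d_i)_{i=0}^n$; consequently, for \emph{every} $\lambda>0$, $\nu(\lambda K)=\lim_j\nu(\lambda P_j)=\lim_j\sum_i\nu_i(P_j)\lambda^i=\sum_{i=0}^n d_i\lambda^i$, a polynomial of degree $\le n$.

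\textbf{The polytope case, and the main obstacle.} It remains to show that for a polytope $P\subset\R^n$ the function $\lambda\mapsto\nu(\lambda P)$ is a polynomial of degree $\le\dim P$. The natural induction is on dimension, carried on the more flexible multivariate statement that, for polytopes $P_1,\dots,P_m$, the function $(\lambda_1,\dots,\lambda_m)\mapsto\nu(\lambda_1 P_1+\dots+\lambda_m P_m)$ coincides on the positive orthant with a polynomial of total degree $\le\dim(P_1+\dots+P_m)$. For the inductive step one fixes a direction $u\in\sfe$, uses the valuation property to split a Minkowski combination $Q=\lambda_1 P_1+\dots+\lambda_m P_m$ along its supporting structure in direction $u$ into two convex pieces and their common lower-dimensional ``waist'' — which is itself a Minkowski combination of faces of the $P_i$, hence covered by the inductive hypothesis — and from the resulting identity extracts a Cauchy-type additivity relation in the parameters $\lambda_i$; continuity of $\nu$ then upgrades additivity to polynomial dependence, and the degrees are tracked throughout. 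Specializing to $m=1$ gives the claim. I expect this inductive step to be the main obstacle: it requires organizing the face combinatorics of Minkowski sums so that the additivity of $\nu$ yields a clean functional equation, while keeping the degree under control. This is the technical core of McMullen's theorem, and I would follow the treatment in Chapter~6 of \cite{Schneider}.
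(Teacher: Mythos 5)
The paper does not actually prove Theorem~\ref{McMullen}; it is imported verbatim from Schneider's monograph \cite{Schneider} and used as a black box, so there is no ``paper's proof'' to compare yours against. Judged on its own merits, your outer reduction is correct and complete: from polynomiality of $\lambda\mapsto\nu(\lambda K)$ you use Vandermonde inversion at $n+1$ fixed scales to define $\nu_0,\dots,\nu_n$ as fixed linear combinations of the continuous valuations $K\mapsto\nu(\lambda_k K)$, and reading off coefficients in $\nu(\lambda\mu K)$ and $\nu(\lambda(K+t))$ gives homogeneity and translation invariance; the density argument passing from polytopes to general bodies is also correct, since the coefficient vectors are continuous functions of the values $\nu(\lambda_k\cdot)$.

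What you have not done is prove the one statement that actually \emph{is} McMullen's theorem: polynomiality of $(\lambda_1,\dots,\lambda_m)\mapsto\nu(\lambda_1P_1+\cdots+\lambda_mP_m)$ on polytopes with the correct degree bound. Your sketch of the induction --- cutting a Minkowski combination by a supporting hyperplane, applying the valuation identity across the two halves and the lower-dimensional waist, and feeding the waist to the inductive hypothesis --- names the right moves, but the combinatorics of how Minkowski-sum faces decompose into Minkowski sums of faces of the summands, and the passage from a Cauchy-type additivity in integer $\lambda_i$ to genuine polynomiality for all real $\lambda_i>0$ (which requires a separate extension-to-groups or rational/continuity bootstrap), is precisely the technical core of the theorem, and you explicitly defer it to \cite{Schneider}. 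So the proposal is a correct and cleanly organized reduction with an honest, acknowledged hole exactly where the substance lies; it would not stand alone as a proof, but it does not take a wrong turn.
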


The proof of Theorem \ref{characterization result} is based upon the famous Hadwiger theorem,
recalled below.

\begin{thm}[Hadwiger, \cite{Schneider}]\label{Hadwiger}
A map $ \nu:\mathcal{K}^n\rightarrow\mathbb{R} $ is a rotation and translation invariant valuation
which is continuous with respect to the Hausdorff metric if and only if there exist constants
$ c_0,\ldots,c_n\in\mathbb{R} $ such that
\begin{equation}\label{Hadwiger's formula}
\nu(K)=\sum_{i=0}^nc_iV_i(K),
\end{equation}
for every $ K\in\mathcal{K}^n $, where $ V_i $ denotes the $i^{\textnormal{th}}$ intrinsic volume.
\end{thm}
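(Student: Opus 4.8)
The plan is to obtain Hadwiger's theorem by Klain's method: reduce, via McMullen's decomposition (Theorem~\ref{McMullen}), to homogeneous components, and then induct on the dimension $n$, the deep input being a separate theorem of Klain on simple valuations. Sufficiency is routine: by the Steiner formula, for each fixed $\varepsilon\ge0$ the functional $K\mapsto V_n(K+\varepsilon B)$ (with $B$ the Euclidean unit ball) is a continuous, rigid-motion invariant valuation on $\mathcal{K}^n$ --- the valuation identity being preserved under Minkowski addition with a fixed convex body --- and expanding it in powers of $\varepsilon$ exhibits each intrinsic volume $V_i$, $0\le i\le n$, as a continuous, rotation- and translation-invariant, $i$-homogeneous valuation. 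Hence every $\sum_{i=0}^n c_iV_i$ is a continuous, rotation- and translation-invariant valuation, and the content of the theorem is the converse.

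So let $\nu\colon\mathcal{K}^n\to\R$ be a continuous, translation- and rotation-invariant valuation. By McMullen's theorem, $\nu=\sum_{i=0}^n\nu_i$ with each $\nu_i$ continuous, translation-invariant and $i$-homogeneous. Applying $\varphi\in\O(n)$ and comparing $\sum_i\lambda^i\nu_i(\varphi K)=\nu(\lambda\varphi K)=\nu(\lambda K)=\sum_i\lambda^i\nu_i(K)$ as polynomials in $\lambda>0$ shows that each $\nu_i$ is itself rotation-invariant; and since $-\operatorname{id}\in\O(n)$, each $\nu_i$ is moreover even, $\nu_i(-K)=\nu_i(K)$. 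I would then argue by induction on $n$. For $n=1$, a continuous translation-invariant valuation on $\mathcal{K}^1$ is of the form $\nu([a,b])=g(b-a)$; the valuation identity applied to overlapping segments forces $g-g(0)$ to satisfy Cauchy's equation, and continuity gives $g(t)=c_0+c_1t$, that is $\nu=c_0V_0+c_1V_1$.

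For the inductive step, assume the theorem in dimension $n-1$. First, the top component $\nu_n$ is \emph{simple}, i.e.\ it vanishes on convex bodies of dimension $<n$: restricted to the convex bodies $\mathcal{K}(H)$ contained in a hyperplane $H\cong\R^{n-1}$ through the origin it is a continuous translation-invariant valuation, hence by McMullen in $\R^{n-1}$ has homogeneity degrees $\le n-1$; matched against $n$-homogeneity this forces $\nu_n|_{\mathcal{K}(H)}\equiv0$, and translation-invariance then kills $\nu_n$ on all lower-dimensional bodies. Being simple, even, continuous and translation-invariant, $\nu_n$ equals a constant multiple of volume, $\nu_n=c_nV_n$, by Klain's theorem stated below. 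Second, for each hyperplane $H$ through the origin the restriction of $\nu$ to $\mathcal{K}(H)$ is continuous, translation-invariant inside $H$, and invariant under the orthogonal group of $H$, so by the inductive hypothesis it equals $\sum_{i=0}^{n-1}c_i(H)V_i$ (intrinsic volumes computed inside $H$ agreeing with those of $\R^n$); since any two such hyperplanes are related by a rotation and the $V_i$ are rotation-invariant, the coefficients $c_i(H)=c_i$ do not depend on $H$. Now $\psi:=\nu-\sum_{i=0}^{n}c_iV_i=\sum_{i=0}^{n-1}(\nu_i-c_iV_i)$ vanishes on every hyperplane through the origin, hence, by translation-invariance, on all lower-dimensional bodies; so $\psi$ is simple, even, continuous and translation-invariant, and Klain's theorem gives $\psi=cV_n$ for some constant $c$. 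But $\psi$ has homogeneity degrees $\le n-1$, forcing $c=0$, hence $\psi\equiv0$ and $\nu=\sum_{i=0}^nc_iV_i$, completing the induction.

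The main obstacle --- the one genuinely deep ingredient --- is the result invoked twice above, \emph{Klain's theorem on simple valuations}: a continuous, translation-invariant, simple valuation $\mu$ on $\mathcal{K}^n$ with $\mu(-K)=\mu(K)$ is a constant multiple of $V_n$. This cannot be reached by the same hyperplane induction, since every simple valuation restricts to $0$ on every hyperplane; instead one passes by continuity to polytopes and analyses even simple valuations there, using cut-and-paste dissections of simplices together with the essential uniqueness of volume among $n$-homogeneous continuous translation-invariant valuations (the volume theorem), to conclude that $\mu$ is proportional to $V_n$. I would import Klain's theorem as a known result (see \cite{Schneider}); everything else in the argument uses only the homogeneous decomposition, an elementary functional equation, and the intrinsic character of the $V_i$.
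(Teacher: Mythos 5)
The paper does not actually prove Theorem~\ref{Hadwiger}: it is imported verbatim from Schneider's monograph \cite{Schneider} and used as a black box, so there is no internal argument to compare yours against. Your proposal is a correct rendering of Klain's short proof of the Hadwiger characterisation (McMullen's decomposition, induction on dimension, the theorem on simple valuations as the one deep input), which is essentially the proof one finds in \cite{Schneider}. The steps all go through: the top component $\nu_n$ is simple by the homogeneity-degree mismatch inside a hyperplane; the coefficients $c_i(H)$ are hyperplane-independent by transitivity of $\O(n)$ on hyperplanes through the origin together with uniqueness of the homogeneous decomposition; and $\psi=\sum_{i<n}(\nu_i-c_iV_i)$ is killed by matching the degree of $cV_n$ against degrees $\le n-1$. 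The one point worth flagging: your appeal to $-\operatorname{id}\in\O(n)$ to get evenness of each $\nu_i$, and thence Klain's theorem in its \emph{even} form, works here precisely because the paper defines ``rotation invariant'' as invariance under the full orthogonal group $\O(n)$, not merely $\mathrm{SO}(n)$. For the $\mathrm{SO}(n)$-invariant version of Hadwiger's theorem (which is the classical statement) one would instead need the Klain--Schneider characterisation of simple translation-invariant valuations without the evenness hypothesis, a harder result; so state clearly which version of Klain's theorem you are invoking.
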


For the definition and properties of the intrinsic volumes, see \cite{Schneider}.

\subsection{McShane's lemma}

Let $\varphi:\mathbb{R}^n\rightarrow\mathbb{R}$, and assume that it is differentiable at a point $x\in\R^n$; we denote
by $\nabla_e\varphi(x)$ the standard Euclidean gradient of $\varphi$ at $x$. 

Given a function $u\colon\sfe\to\R$, it can often be convenient to extend it to $\R^n$ as a 1-homogeneous function. 
Let us denote by $\tilde u$ this extension:
$$
\tilde u(x)=
\left\{
\begin{array}{lll}
\|x\|\, u\left(\displaystyle{\frac x{\|x\|}}\right)&\quad\mbox{if $x\ne0$},\\
\\
0&\quad\mbox{if $x=0$.}
\end{array}
\right.
$$
Let $x\in\sfe$ be a point where $u$ is differentiable; then $\tilde u$ is differentiable at $x$ as well. If we denote by
$\nabla_e \tilde u(x)$ the standard Euclidean gradient of $\tilde u$ at $x$, by the Euler relation we obtain the following equality:
\begin{equation}\label{connection between the gradients}
\|\nabla_e\tilde u(x)\|^2=\|\nabla u(x)\|^2+u^2(x).
\end{equation}

Let $u\in\Lip$; formula \eqref{connection between the gradients} can be applied to get the following bound on 
the spherical gradient:
\begin{equation}\label{bound on the gradient of a lipschitz function}
\|\nabla u(x)\|\leq\sqrt{n}\cdot L(u),
\end{equation}
for $ H^{n-1}$-a.e. $ x\in \sfe $.


There is another way of extending Lipschitz functions defined on $\sfe$ which we will be interested in; it is stated in the
following theorem.
\begin{thm}[McShane, \cite{McShane}]\label{McShane}
Let $ S\subseteq\mathbb{R}^n $ and $ u:S\rightarrow\mathbb{R} $ be a Lipschitz function with
Lipschitz constant $ L $. Then the map $ \bar{u}:\mathbb{R}^n\rightarrow\mathbb{R} $ defined by
$$ 
\bar{u}(x)=\sup_{z\in S}[u(z)-L\|x-z\|], 
$$
for $ x\in\mathbb{R}^n $, is still Lipschitz continuous with the same Lipschitz constant $ L $.
\end{thm}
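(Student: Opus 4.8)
The plan is to check, in order, that $\bar u$ is everywhere finite, that it restricts to $u$ on $S$, and that it satisfies the Lipschitz estimate with constant $L$; only the last item is the asserted conclusion, and it reduces to a single application of the triangle inequality.

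First I would establish well-definedness, which is the only place the Lipschitz hypothesis on $u$ (rather than mere boundedness) really enters. Fix any basepoint $z_0\in S$. For every $z\in S$ and every $x\in\mathbb{R}^n$, the $L$-Lipschitz property of $u$ on $S$ together with $\|z-z_0\|\le\|x-z\|+\|x-z_0\|$ gives
$$
u(z)-L\|x-z\|\le u(z_0)+L\|z-z_0\|-L\|x-z\|\le u(z_0)+L\|x-z_0\|.
$$
Thus the set of values $\{u(z)-L\|x-z\|:z\in S\}$ is nonempty (take $z=z_0$) and bounded above, so $\bar u(x)\in\mathbb{R}$ for every $x$.

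Next I would note that $\bar u$ genuinely extends $u$: for $x\in S$, the choice $z=x$ in the supremum yields $\bar u(x)\ge u(x)$, while for any $z\in S$ the Lipschitz bound $u(z)-u(x)\le L\|x-z\|$ gives $u(z)-L\|x-z\|\le u(x)$, whence $\bar u(x)\le u(x)$; so $\bar u|_S=u$. (This fact is the point of the construction, although it is not part of the statement to be proved.) Finally, for the Lipschitz estimate, fix $x,y\in\mathbb{R}^n$; for every $z\in S$ the inequality $\|x-z\|\ge\|y-z\|-\|x-y\|$ gives
$$
u(z)-L\|x-z\|\le \bigl(u(z)-L\|y-z\|\bigr)+L\|x-y\|\le \bar u(y)+L\|x-y\|.
$$
Taking the supremum over $z\in S$ yields $\bar u(x)\le\bar u(y)+L\|x-y\|$, and exchanging $x$ and $y$ gives the reverse bound, so $|\bar u(x)-\bar u(y)|\le L\|x-y\|$ for all $x,y\in\mathbb{R}^n$. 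Since $\bar u$ restricts to $u$ on $S$, its Lipschitz constant cannot be smaller than $L$ (when $L$ is the optimal constant for $u$), so it equals $L$ exactly.

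I do not expect any genuine obstacle: the argument is entirely elementary, and the only subtlety is the finiteness step, where one must exclude $\bar u\equiv+\infty$ and where the Lipschitz (as opposed to, say, merely bounded) hypothesis is used.
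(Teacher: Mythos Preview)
Your argument is correct and is the standard proof of McShane's extension lemma. The paper itself does not supply a proof of this statement; it merely records it as a cited result from McShane's original article, so there is no in-paper proof to compare against.
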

We conclude this paragraph by recalling that the gradient of a support function possesses the
following property (see \cite{Schneider}).
\begin{prop}\label{gradient of support functions}
Let $ K\in\mathcal{K}^n $. If $ h_K $ is differentiable at $ x\in \sfe $, then $ \nabla_eh_K(x)\in
\partial K $, and $ \nabla_eh_K(x) $ is the only point of $ \partial K $ with outer normal vector $ x $.
\end{prop}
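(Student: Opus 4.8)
The plan is to argue directly from $h_K(x)=\max_{z\in K}\langle x,z\rangle$ together with the standard description of outer normals: a point $y\in\partial K$ has $x\in\sfe$ as an outer normal if and only if $\langle x,z-y\rangle\le 0$ for all $z\in K$, equivalently $y\in K$ and $\langle x,y\rangle=h_K(x)$. Writing $g=\nabla_e h_K(x)$, I will establish in turn that $g\in K$, that $\langle x,g\rangle=h_K(x)$ (so that $x$ is an outer normal to $K$ at $g$), that $g\in\partial K$, and finally uniqueness of such a point. Equivalently, the whole argument can be phrased with subdifferentials: $\partial h_K(x)$ always equals the face $\{y\in K:\langle x,y\rangle=h_K(x)\}$, and differentiability of $h_K$ at an interior point of its domain forces this set to be the singleton $\{\nabla_e h_K(x)\}$; below I carry out the hands-on version.

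To get $g\in K$, I use that $h_K$, being convex and $1$-homogeneous, is subadditive, so for every $w\in\R^n$ and every $t>0$,
$$
h_K(x+tw)\le h_K(x)+h_K(tw)=h_K(x)+t\,h_K(w).
$$
Differentiability of $h_K$ at $x$ gives $h_K(x+tw)=h_K(x)+t\langle g,w\rangle+o(t)$ as $t\to 0^+$; subtracting $h_K(x)$, dividing by $t$, and letting $t\to 0^+$ yields $\langle g,w\rangle\le h_K(w)$ for every $w\in\R^n$. Since $K=\{y\in\R^n:\langle y,w\rangle\le h_K(w)\text{ for all }w\in\R^n\}$, this means $g\in K$. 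The Euler relation for the $1$-homogeneous differentiable function $h_K$ gives $\langle x,g\rangle=\langle x,\nabla_e h_K(x)\rangle=h_K(x)$; combined with $\langle x,z\rangle\le h_K(x)$ for all $z\in K$, this says exactly that $x$ is an outer normal to $K$ at $g$. Finally $g\in\partial K$: if $g$ were interior to $K$, then $g+\varepsilon x\in K$ for small $\varepsilon>0$, contradicting $\langle x,g+\varepsilon x\rangle=h_K(x)+\varepsilon>h_K(x)$.

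For uniqueness, suppose $y\in\partial K$ also has $x$ as an outer normal, so that $y\in K$ and $\langle x,y\rangle=h_K(x)$. For every $w\in\R^n$ and $t>0$, the trivial bound $h_K(x-tw)\ge\langle x-tw,y\rangle=h_K(x)-t\langle w,y\rangle$ together with the expansion $h_K(x-tw)=h_K(x)-t\langle g,w\rangle+o(t)$ gives, after subtracting $h_K(x)$, dividing by $t$, and letting $t\to 0^+$, the inequality $\langle w,y\rangle\ge\langle w,g\rangle$; applying this to $-w$ as well forces $\langle w,y\rangle=\langle w,g\rangle$ for all $w\in\R^n$, hence $y=g$, which is the asserted uniqueness.

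There is no deep obstacle here; the only points requiring care are the repeated one-sided passages to the limit $t\to 0^+$ along fixed directions, which are legitimate because $x$ lies in the interior of the domain $\R^n$ of $h_K$, and the case of a lower-dimensional $K$, for which ``$\partial K$'' should be read as the relative boundary and the step placing $g$ in $\partial K$ is replaced by the remark that a point of $K$ admitting a nonzero outer normal cannot belong to the relative interior. Since this statement is classical, one may alternatively simply invoke \cite{Schneider}.
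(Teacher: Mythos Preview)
Your proof is correct and complete. Note, however, that the paper does not actually prove this proposition: it is stated as a recalled fact with a reference to \cite{Schneider} (see the sentence immediately preceding the statement). So there is no ``paper's own proof'' to compare against; you have supplied a self-contained argument where the authors chose to cite the literature. Your closing remark that one may simply invoke \cite{Schneider} is precisely what the paper does.

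One minor comment on your caveat about lower-dimensional $K$: if $K$ has empty interior in $\R^n$, then topologically $\partial K=K$, so the step ``$g\in\partial K$'' is trivially true in that case and no separate discussion of the relative boundary is needed for the statement as written.
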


\subsection{Some remarks}
In this paragraph we collect some technical remarks which will be useful throughout the paper.
It is convenient to study the behaviour of support functions with respect to the operators
$ \vee $ and $ \wedge $. The result hereby presented is well-known, we include the proof for completeness.

\begin{lem}\label{maximum and minimum of support functions}
Let $ K,L\in\mathcal{K}^n $. Then $ h_K\vee h_L=h_{\conv(K\cup L)} $, where $ \conv(K\cup L) $
denotes the convex hull of $ K\cup L $. Moreover, if $ K\cup L\in\mathcal{K}^n $ we have
$$ 
h_K\vee h_L=h_{K\cup L},\quad h_K\wedge h_L=h_{K\cap L}. 
$$
\end{lem}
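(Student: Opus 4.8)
The plan is to handle the maximum first, then deduce the minimum. For the maximum, I would begin from the elementary fact that for any bounded set $A\subseteq\R^n$ one has $h_{\conv A}(x)=\sup_{y\in A}\langle x,y\rangle$: the inequality ``$\ge$'' is immediate since $A\subseteq\conv A$, and ``$\le$'' follows because every point of $\conv A$ is a convex combination $\sum_i\lambda_i a_i$ with $a_i\in A$, whence $\langle x,\sum_i\lambda_i a_i\rangle=\sum_i\lambda_i\langle x,a_i\rangle\le\sup_{y\in A}\langle x,y\rangle$. Applying this with $A=K\cup L$ and noting that $\sup_{y\in K\cup L}\langle x,y\rangle=\max\{h_K(x),h_L(x)\}=(h_K\vee h_L)(x)$ gives $h_K\vee h_L=h_{\conv(K\cup L)}$. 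When $K\cup L$ is itself convex, $\conv(K\cup L)=K\cup L$, so $h_K\vee h_L=h_{K\cup L}$.

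For the minimum, assume $K\cup L\in\K^n$. Since $K\cap L\subseteq K$ and $K\cap L\subseteq L$ we get $h_{K\cap L}\le h_K$ and $h_{K\cap L}\le h_L$, hence $h_{K\cap L}\le h_K\wedge h_L$. For the reverse inequality, fix $x\in\R^n$ and, without loss of generality, suppose $h_K(x)\le h_L(x)$, so that $(h_K\wedge h_L)(x)=h_K(x)$. Choose $p\in L$ with $\langle x,p\rangle=h_L(x)$ and $q\in K$ with $\langle x,q\rangle=h_K(x)$; by convexity of $K\cup L$ the segment $[p,q]$ is contained in $K\cup L$. Set $t_0=\inf\{t\in[0,1]:(1-t)p+tq\in K\}$ and $w=(1-t_0)p+t_0q$. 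Since $K$ is closed, $w\in K$; since for $0\le t<t_0$ the point $(1-t)p+tq$ lies in $(K\cup L)\setminus K\subseteq L$, and since moreover $p\in L$, closedness of $L$ forces $w\in L$ in all cases, so $w\in K\cap L$. Finally $\langle x,w\rangle=(1-t_0)h_L(x)+t_0h_K(x)\ge h_K(x)$, while $w\in K$ gives $\langle x,w\rangle\le h_K(x)$; hence $h_{K\cap L}(x)\ge\langle x,w\rangle=h_K(x)=(h_K\wedge h_L)(x)$, completing the argument.

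As an alternative route to the minimum, one may instead invoke the classical identity $(K\cup L)+(K\cap L)=K+L$ (Minkowski sum), valid whenever $K\cup L$ is convex, which translates into $h_{K\cup L}+h_{K\cap L}=h_K+h_L$; combined with $h_{K\cup L}=h_K\vee h_L$ and the pointwise identity $a+b-\max\{a,b\}=\min\{a,b\}$, this gives $h_{K\cap L}=h_K\wedge h_L$ directly. Either way, the only genuinely nontrivial point is the inequality $h_K\wedge h_L\le h_{K\cap L}$, where the convexity of $K\cup L$ is essential; everything else is formal manipulation of the definition of support function.
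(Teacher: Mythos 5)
Your proof is correct. The first part (the identity $h_K\vee h_L=h_{\conv(K\cup L)}$) is essentially the paper's argument, just phrased through the general fact $h_{\conv A}(x)=\sup_{y\in A}\langle x,y\rangle$. For the minimum, however, you take a genuinely different route. The paper first establishes the Minkowski-sum identity $(K\cup L)+(K\cap L)=K+L$ and then concludes via the additivity $h_{M+N}=h_M+h_N$ together with $a+b=\max\{a,b\}+\min\{a,b\}$ --- exactly the ``alternative route'' you sketch at the end. Your primary argument instead works pointwise: for a fixed direction $x$ you pick maximizers $p\in L$ and $q\in K$, slide along the segment $[p,q]\subseteq K\cup L$ to the first point $w$ lying in $K$, and verify $w\in K\cap L$ with $\langle x,w\rangle=h_K(x)$. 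It is worth noting that both proofs ultimately rest on the same geometric fact --- that a segment joining a point of $K$ to a point of $L$ must meet $K\cap L$ when $K\cup L$ is convex --- which the paper uses to prove the Minkowski-sum identity and you use directly on the maximizers; your infimum construction actually makes this step more explicit than the paper's brief existence claim for the parameter $t$. Your version is more self-contained (it avoids invoking $h_{K+L}=h_K+h_L$), at the cost of a slightly more delicate topological argument (closedness of $K$ and $L$, and the degenerate case $t_0=0$, which you handle correctly); the paper's version is the standard textbook route and has the side benefit of recording the identity $(K\cup L)+(K\cap L)=K+L$, though for the purposes of this paper only the support-function identities are used later.
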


\begin{proof}
Clearly $ \conv(K\cup L) $ contains $ K $ and $ L $, thus $ h_{\conv(K\cup L)}\geq h_K $ 
and $ h_{\conv(K\cup L)}\geq h_L $. This implies the inequality
$$ 
h_{\conv(K\cup L)}(x)\geq h_K\vee h_L(x), 
$$
for every $ x\in\mathbb{R}^n $. Vice versa, if $ x\in\mathbb{R}^n $, then
$$ h_{\conv(K\cup L)}(x)=\max_{z\in\conv(K\cup L)}\langle x,z\rangle, $$
where the maximum will be attained in correspondence of a certain element $ z=\sum_{i=1}^m
\lambda_ix_i $, with $ m\in\mathbb{N} $, $ \lambda_i\geq 0 $, $ \sum_{i=1}^m\lambda_i=1 $,
$ x_i\in K\cup L $. By reordering the elements, we can assume that $ 
\{x_1,\ldots,x_l\}\subseteq K $
and $ \{x_{l+1},\ldots,x_m\}\subseteq L $. Therefore, we have
\begin{equation}\notag
\begin{split}
h_{\conv(K\cup L)}(x)&=\left\langle x,\sum_{i=1}^m\lambda_ix_i\right\rangle=
\sum_{i=1}^l\lambda_i\langle x,x_i\rangle+\sum_{i=l+1}^m\lambda_i\langle 
x,x_i\rangle\\
&\leq\sum_{i=1}^l\lambda_ih_K(x)+\sum_{i=l+1}^m\lambda_ih_L(x)\\
&\leq\left(\sum_{i=1}^l\lambda_i+\sum_{i=l+1}^m\lambda_i\right)h_K\vee h_L(x)=h_K\vee 
h_L(x).
\end{split}
\end{equation}

We now work under the hypothesis $ K\cup L\in\mathcal{K}^n $. We first prove that
\begin{equation}\label{Minkowski sum of union and intersection}
(K\cup L)+(K\cap L)=K+L.
\end{equation}

Note that
$$ (K\cup L)+(K\cap L)\subseteq K+L. $$
Vice versa, let $ x+y\in K+L $. If either $ x\in K\cap L $ or $ y\in K\cap L $, then we 
are done;
suppose now $ x\in K\backslash L $ and $ y\in L\backslash K $. Because of this 
assumption, there
exists $ t\in(0,1) $ such that
$$ z:=tx+(1-t)y\in K\cap L, $$
since $ K\cup L\in\mathcal{K}^n $. Therefore,
$$ x+y=(1-t)x+ty+z\in(K\cup L)+(K\cap L), $$
using the convexity of $ K\cup L $ again. Formula \eqref{Minkowski sum of union and 
intersection}
follows.
From the properties of support functions (see \cite[Section 1.7]{Schneider}) we obtain
\begin{eqnarray*}
h_{K\cup L}+h_{K\cap L}&=&h_{(K\cup L)+(K\cap L)}=h_{K+L}=h_K+h_L\\
&=&h_K\vee h_L+h_K\wedge h_L=h_{\conv(K\cup L)}+h_K\wedge h_L=h_{K\cup L}+h_K\wedge h_L, 
\end{eqnarray*}
where the last equality follows from the hypothesis $ K\cup L\in\mathcal{K}^n $. This 
proves the lemma.

\end{proof}

We are now going to state a topological result concerning the continuity of a functional $ \mu:
\mathcal{H}(\sfe)\rightarrow\mathbb{R} $. By definition of $ \tau $, we have that if such a
functional is continuous with respect to $ \|\cdot\|_\infty $, then it is also continuous with respect to
$ \tau $. The converse is also true.

\begin{lem}\label{continuity of a valuation on H}
Let $ \mu:\mathcal{H}(\sfe)\rightarrow\mathbb{R} $. Then $ \mu $ is continuous with respect to
$ \tau $ if and only if it is continuous with respect to $ \|\cdot\|_\infty $.
\end{lem}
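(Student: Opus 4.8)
The plan is to prove Lemma~\ref{continuity of a valuation on H} by showing that on the cone $\mathcal{H}(\sfe)$ of support functions, the topology $\tau$ and the uniform topology induce the same notion of convergence. One direction is immediate: since $\tau$-convergence includes uniform convergence of the functions as its first requirement, any $\|\cdot\|_\infty$-continuous functional on $\mathcal{H}(\sfe)$ is automatically $\tau$-continuous. So the content is the converse: if $h_i\to h$ uniformly, with all $h_i,h\in\mathcal{H}(\sfe)$, then $h_i\xrightarrow{\tau}h$.

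The key point is that convergence of the support functions is equivalent to Hausdorff convergence of the underlying convex bodies: $\|h_{K_i}-h_K\|_\infty=d_H(K_i,K)$, so $h_i\to h$ uniformly means $K_i\to K$ in the Hausdorff metric. I would then verify conditions (2) and (3) in the definition of $\tau$. For condition (3), uniform convergence gives a uniform bound on the Lipschitz constants $L(h_i)$ — indeed a $\|\cdot\|_\infty$-convergent sequence of $1$-homogeneous convex functions on $\sfe$ is uniformly Lipschitz, since $L(h_K)$ is controlled by the diameter/circumradius of $K$, and the $K_i$ are eventually contained in a fixed ball. Then the gradient bound \eqref{bound on the gradient of a lipschitz function}, $\|\nabla h_i(x)\|\le\sqrt n\,L(h_i)$, yields the required uniform bound. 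For condition (2), I would use Proposition~\ref{gradient of support functions}: at a point $x\in\sfe$ where $h_K$ is differentiable, $\nabla_e h_K(x)$ is the unique boundary point of $K$ with outer normal $x$, and then $\nabla h(x)$ is recovered from $\nabla_e\tilde h(x)$ via the decomposition behind \eqref{connection between the gradients} (tangential part of the Euclidean gradient of the $1$-homogeneous extension). So it suffices to show that for $H^{n-1}$-a.e.\ $x$, the boundary points of $K_i$ with outer normal $x$ converge to the corresponding boundary point of $K$.

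The main obstacle is precisely this a.e.\ convergence of gradients. Hausdorff convergence $K_i\to K$ does not by itself force $\nabla h_{K_i}(x)\to\nabla h_K(x)$ at every differentiability point — one must exclude a null set of directions. The standard tool is the classical fact (see Schneider, on convergence of support functions and of subdifferentials) that if $K_i\to K$ in the Hausdorff metric and $h_K$ is differentiable at $x$, then $h_{K_i}$ is eventually ``close to differentiable'' at $x$ and $\nabla_e h_{K_i}(x)\to\nabla_e h_K(x)$; more precisely, any selection from $\partial h_{K_i}(x)$ converges to $\nabla_e h_K(x)$ whenever the limit $\partial h_K(x)$ is a singleton. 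Since $h_K$ is convex it is differentiable $H^{n-1}$-a.e.\ on $\sfe$, so the exceptional set is null, and condition (2) follows. I would also remark that the $h_i$ need not be supported by the \emph{same} convex bodies a priori, but the correspondence $h_K\leftrightarrow K$ is a bijection, so no ambiguity arises. Assembling these three points gives $h_i\xrightarrow{\tau}h$, hence $\tau$-continuity of $\mu$ implies $\|\cdot\|_\infty$-continuity, completing the proof.
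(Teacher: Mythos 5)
Your proposal is correct and follows essentially the same route as the paper: the easy direction comes from condition (1) in the definition of $\tau$, the uniform gradient bound comes from the fact that Hausdorff convergence traps all the bodies in a fixed compact set (so Proposition \ref{gradient of support functions} or, equivalently, a circumradius bound on the Lipschitz constants applies), and the a.e.\ convergence of spherical gradients reduces to the stability of gradients of convex functions at differentiability points of the limit. The only difference is that you cite this last fact as standard, whereas the paper proves it directly from the subgradient inequality together with a Bolzano--Weierstrass subsequence argument.
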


\begin{proof}
Consider a $ \tau-$continuous functional $ \mu:\mathcal{H}(\sfe)\rightarrow\mathbb{R} $
and a sequence $ \{h_{K_i}\}\subseteq\mathcal{H}(\sfe) $ of support functions such that $ \|h_{K_i}-
h_K\|_\infty\rightarrow 0 $, as $ i\rightarrow\infty $, where $K\in{\mathcal K}^n$. It is enough to prove that $ h_{K_i}
\xrightarrow[\tau]{}h_K $.

Define
$$ D_i=\{x\in \sfe:h_{K_i}\mbox{ is differentiable at }x\}, $$
for $ i\in\mathbb{N} $, and
$$ D_0=\{x\in \sfe:h_K\mbox{ is differentiable at }x\}. $$
We also set
$$ D=\bigcap_{i=0}^\infty D_i. $$
Note that $ H^{n-1}(\sfe\backslash D)=0 $, because of Rademacher's theorem.

For every $ x\in D $ we have
\begin{equation}\label{convergence of gradients}
\nabla h_{K_i}(x)\rightarrow\nabla h_K(x).
\end{equation}
Indeed, consider a subsequence $ \{h_{K_{i_j}}\}\subseteq\{h_{K_i}\} $. For every $ j\in\mathbb{N} $,
the differentiability of $ h_{K_{i_j}} $ at $ x $ implies (see, e.g., \cite[Section 1.5]{Schneider})
\begin{equation}\label{subgradient inequality}
h_{K_{i_j}}(y)\geq h_{K_{i_j}}(x)+\langle\nabla_eh_{K_{i_j}}(x),y-x\rangle,
\end{equation}
for every $ y\in\mathbb{R}^n $. The condition $ \|h_{K_{i_j}}-h_K\|_\infty\rightarrow 0 $ implies
that $ K_{i_j}\rightarrow K $ with respect to the Hausdorff metric, hence there is a convex body
$ \widetilde{K} $ such that $ K_{i_j}\subseteq\widetilde{K} $, for every $ j\in\mathbb{N} $ (see
\cite[Section 1.8]{Schneider}). From Proposition \ref{gradient of support functions} we have that
$$ \nabla_eh_{K_{i_j}}(x)\in\partial K_{i_j}\subseteq K_{i_j}\subseteq\widetilde{K}, $$
thus there is a subsequence $ \{h_{K_{i_{j_l}}}\}\subseteq\{h_{K_{i_j}}\} $ such that $ \lim_{l
\rightarrow\infty}\nabla_eh_{K_{i_{j_l}}}(x) $ exists, by the Bolzano-Weierstrass theorem. Writing
\eqref{subgradient inequality} for this subsequence and letting $ l\rightarrow\infty $ we obtain
$$ h_K(y)\geq h_K(x)+\left\langle\lim_{l\rightarrow\infty}\nabla_eh_{K_{i_{j_l}}}(x),y-x\right\rangle, $$
for every $ y\in\mathbb{R}^n $. Recalling the uniqueness of the subgradient at differentiability points
for convex functions (\cite[Section 1.5]{Schneider}), the last inequality implies
$$ \lim_{l\rightarrow\infty}\nabla_eh_{K_{i_{j_l}}}(x)=\nabla_eh_K(x). $$
This, together with relation \eqref{connection between the gradients} and the arbitrariness of
$ \{h_{K_{i_j}}\}\subseteq\{h_{K_i}\} $, proves \eqref{convergence of gradients}.

Moreover, for every $ x\in D $ and $ i\in\mathbb{N} $ we have
$$
\|\nabla h_{K_i}(x)\|\le\|\nabla_e h_{K_i}(x)\|\le \max\{\|y\|\colon y\in\widetilde K\},
$$
where we have used Proposition \ref{gradient of support functions} again. Hence we have a uniform
bound on $ \|\nabla h_{K_i}\| $ in $ D $. Thus $ h_{K_i}\xrightarrow[\tau]{}h_K $, as desired.
\end{proof}

Theorems \ref{McMullen} and \ref{Hadwiger} concern valuations on convex bodies, and since we will
be interested in studying valuations on support functions using these results, it would be nice to know
that we can ``move'' valuations from $ \mathcal{H}(\sfe) $ to $ \mathcal{K}^n $ without losing any
property. This is stated precisely in the next result.
\begin{lem}\label{moving valuations}
Let $ \mu:\mathcal{H}(\sfe)\rightarrow\mathbb{R} $. Define $ \nu:\mathcal{K}^n\rightarrow
\mathbb{R} $ by setting
$$ \nu(K)=\mu(h_K), $$
for every $ K\in\mathcal{K}^n $. Then
\begin{itemize}
\item[i)] if $ \mu $ is a valuation, then so is $\nu$;
\item[ii)] if $ \mu $ is $ \tau-$continuous, then $\nu$ is continuous with respect to the Hausdorff metric;
\item[iii)] if $ \mu $ is rotation invariant, then so is $\nu$;
\item[iv)] if $ \mu $ is dot product invariant, then $\nu$ is translation invariant;
\item[v)] if $ \mu $ is $ i-$homogeneous for some $ i\in\{0,\ldots,n\} $, then so is $ \nu $.
\end{itemize}
\end{lem}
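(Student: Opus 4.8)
The plan is to check the five assertions one at a time, in each case rewriting the geometric operation on $\mathcal{K}^n$ as the corresponding lattice, algebraic or group operation on support functions, and then invoking the hypothesis on $\mu$; throughout I would use the inclusion $\mathcal{H}(\sfe)\subseteq\Lip$, so that $\mu$ may legitimately be evaluated on every function that arises.

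For (i), given $K,L\in\mathcal{K}^n$ with $K\cup L\in\mathcal{K}^n$, Lemma \ref{maximum and minimum of support functions} supplies $h_{K\cup L}=h_K\vee h_L$ and $h_{K\cap L}=h_K\wedge h_L$; applying the valuation identity \eqref{definition of valuation} for $\mu$ with $u=h_K$ and $v=h_L$ then gives $\nu(K\cup L)+\nu(K\cap L)=\mu(h_K\vee h_L)+\mu(h_K\wedge h_L)=\mu(h_K)+\mu(h_L)=\nu(K)+\nu(L)$. For (ii), if $K_i\to K$ in the Hausdorff metric then, since $\|h_{K_i}-h_K\|_\infty=d_H(K_i,K)$, we have $h_{K_i}\to h_K$ uniformly; by Lemma \ref{continuity of a valuation on H} the $\tau$-continuity of $\mu$ on $\mathcal{H}(\sfe)$ is equivalent to $\|\cdot\|_\infty$-continuity, so $\nu(K_i)=\mu(h_{K_i})\to\mu(h_K)=\nu(K)$.

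For (iii)--(v) I would use the elementary identities $h_{\varphi(K)}=h_K\circ\varphi^{-1}$ for $\varphi\in\O(n)$, $h_{K+x}=h_K+\langle\cdot,x\rangle$ for $x\in\R^n$, and $h_{\lambda K}=\lambda h_K$ for $\lambda>0$, all immediate from $h_K(y)=\max_{z\in K}\langle y,z\rangle$ and all compatible with restriction to $\sfe$ (note that $\varphi^{-1}$ maps $\sfe$ onto $\sfe$, and that $\O(n)$ is a group). Then rotation invariance of $\mu$ yields $\nu(\varphi(K))=\mu(h_K\circ\varphi^{-1})=\mu(h_K)=\nu(K)$; dot product invariance of $\mu$ yields $\nu(K+x)=\mu\bigl(h_K+\langle\cdot,x\rangle\bigr)=\mu(h_K)=\nu(K)$, i.e. $\nu$ is translation invariant; and $i$-homogeneity of $\mu$ yields $\nu(\lambda K)=\mu(\lambda h_K)=\lambda^i\mu(h_K)=\lambda^i\nu(K)$.

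I do not expect a genuine obstacle: once the dictionary between convex bodies and their support functions is in place this is essentially bookkeeping. The only two points that are not completely trivial — that $h_K\vee h_L$ and $h_K\wedge h_L$ are again support functions of convex bodies when $K\cup L$ is convex, and that $\tau$-continuity and uniform continuity agree on $\mathcal{H}(\sfe)$ — have already been isolated as Lemmas \ref{maximum and minimum of support functions} and \ref{continuity of a valuation on H}, so the proof merely assembles them.
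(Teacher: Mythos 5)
Your proof is correct and follows exactly the route the paper intends: the paper itself only remarks that (i) and (ii) follow from Lemmas \ref{maximum and minimum of support functions} and \ref{continuity of a valuation on H} and that (iii)--(v) follow from standard properties of support functions, and your write-up simply supplies the (correct) identities $h_{\varphi(K)}=h_K\circ\varphi^{-1}$, $h_{K+x}=h_K+\langle\cdot,x\rangle$ and $h_{\lambda K}=\lambda h_K$ together with $\|h_{K_i}-h_K\|_\infty=d_H(K_i,K)$ that make this precise.
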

Assertions $ i) $ and $ ii) $ are consequences of lemmas
\ref{maximum and minimum of support functions} and \ref{continuity of a valuation on H} respectively.
The rest of the statement follows from the properties of support functions, which can be found in
\cite{Schneider}.

\section{Approximation}\label{section Approximation}
The idea behind the proof of Theorem \ref{characterization result} is that of using an approximation
result to narrow down the study of our valuations from the space $ \Lip $ to its subset
$ \mathcal{H}(\sfe) $, which is in bijection with $ \mathcal{K}^n $, where Hadwiger's theorem can
be applied.

More precisely, our goal is to prove that continuous valuations on $ \Lip $ are uniquely determined by
the values they attain at support functions, as stated in the following proposition.

\begin{prop}\label{valuations are determined by their values on support functions}
Let $ \mu_1,\mu_2:\Lip\rightarrow\mathbb{R} $ be continuous valuations. If $ \mu_1=\mu_2 $ on
$ \mathcal{H}(\sfe) $, then $ \mu_1=\mu_2 $ on $ \Lip $.
\end{prop}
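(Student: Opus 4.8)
The plan is to approximate an arbitrary $u\in\Lip$ by a sequence of functions built out of support functions (more precisely, out of $\widehat{\mathcal H}(\sfe)$, finite minima of support functions), in such a way that the valuation property and $\tau$-continuity force $\mu_1$ and $\mu_2$ to agree. The strategy proceeds in three stages, working ``inwards'' from $\Lip$ towards $\mathcal H(\sfe)$.

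First I would show that $\mu_1=\mu_2$ on $\widehat{\mathcal H}(\sfe)$. The key algebraic fact is that for $h_{K_1},\dots,h_{K_m}\in\mathcal H(\sfe)$, repeated use of the inclusion--exclusion identity that a valuation satisfies — applied to the pair $(h_{K_1}\wedge\cdots\wedge h_{K_{m-1}},\,h_{K_m})$ together with Lemma~\ref{maximum and minimum of support functions} (so that a maximum of support functions is again a support function, namely $h_{\conv(K\cup L)}$) — expresses $\mu\bigl(\bigwedge_{i=1}^m h_{K_i}\bigr)$ as a finite alternating sum of values of $\mu$ on single support functions. Since $\mu_1$ and $\mu_2$ agree on $\mathcal H(\sfe)$ by hypothesis, and the combinatorial expansion is identical for both, they agree on all of $\widehat{\mathcal H}(\sfe)$. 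This is the step that genuinely uses the valuation property; everything afterward is soft.

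Next I would approximate a general $u\in\Lip$ by elements of $\widehat{\mathcal H}(\sfe)$ in the topology $\tau$. The natural construction: cover $\sfe$ by small spherical caps, and on each cap replace $u$ from below by a supporting ``tangent-type'' affine (or support) function, then take the pointwise minimum of these finitely many pieces; as the mesh of the cover shrinks, one obtains a sequence $u_j\to u$. The three conditions defining $\tau$-convergence must be checked: uniform convergence follows from the Lipschitz modulus of continuity of $u$ controlling how far below $u$ each local piece drops; the uniform gradient bound \eqref{bound on the gradient of a lipschitz function} gives condition (3) since all pieces can be taken with Lipschitz constant $\leq C\,L(u)$; and a.e.\ convergence of gradients (condition (2)) follows at each point of differentiability of $u$, because near such a point the minimum is eventually attained by the piece supporting $u$ there, whose gradient converges to $\nabla u(x)$. (Using McShane's Theorem~\ref{McShane} to extend the local Lipschitz pieces, or using that $u$ lies between two translates of large spheres, one can even arrange each piece to be a genuine support function, so that $u_j\in\widehat{\mathcal H}(\sfe)$.) This is the step I expect to be the main obstacle: producing \emph{one} approximating sequence that simultaneously satisfies all three clauses of $\tau$-convergence — in particular reconciling the \emph{uniform} gradient bound with the a.e.\ \emph{pointwise} convergence of gradients — requires a careful choice of the local building blocks and a Rademacher/Lebesgue-point argument to handle the set where several pieces meet.

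Finally, combining the two stages: given $u\in\Lip$, pick $u_j\in\widehat{\mathcal H}(\sfe)$ with $u_j\xrightarrow[\tau]{}u$. By Stage~1, $\mu_1(u_j)=\mu_2(u_j)$ for every $j$; by $\tau$-continuity of both valuations, $\mu_1(u)=\lim_j\mu_1(u_j)=\lim_j\mu_2(u_j)=\mu_2(u)$. Since $u$ was arbitrary, $\mu_1=\mu_2$ on $\Lip$, which is the assertion of Proposition~\ref{valuations are determined by their values on support functions}.
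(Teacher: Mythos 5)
Your Stage 1 (passing from $\mathcal H(\sfe)$ to $\widehat{\mathcal H}(\sfe)$ by inclusion--exclusion and Lemma \ref{maximum and minimum of support functions}) and Stage 3 (concluding by $\tau$-continuity) coincide with the paper's argument. The genuine gap is Stage 2, and it is not a technicality you have merely deferred: the construction you sketch does not yield $\tau$-convergence of the gradients. If your local pieces are McShane-type cones $u(p_i)+L\|x-p_i\|$, or indeed any pieces forced to dominate $u$ away from their cap while touching $u$ on it, then at almost every point the active piece of the minimum has gradient of norm comparable to $L(u)$, not to $\|\nabla u(x)\|$: the infimal-convolution approximant converges uniformly, but its gradient does not converge pointwise a.e.\ to $\nabla u$. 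If instead you choose pieces whose gradient at $x$ is close to $\nabla u(x)$ (your ``tangent-type'' affine pieces), these need not dominate $u$ near $x$ unless $u$ is semi-concave, so the minimum need not select them. Your proposed fix --- that near a differentiability point the minimum is eventually attained by the piece supporting $u$ there, with the right gradient --- requires the first-order expansion of $u$ at $x$ to hold with an error uniform both in $x$ and over the cap; Rademacher's theorem gives differentiability only pointwise a.e.\ with no uniformity, so no single finite cover can serve all differentiability points simultaneously.

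The paper circumvents precisely this obstacle by a two-step detour. First it mollifies to reduce to $u\in C^1(\sfe)$ (Lemma \ref{approximation of lipschitz functions}), where a.e.\ convergence of gradients is extracted from $L^1$ convergence of the convolved partial derivatives via a diagonal subsequence. Then, for $C^1$ functions, it interpolates linearly on a simplicial decomposition of $\partial([-1,1]^n)$ (Lemma \ref{approximation of C^1 functions}); there the \emph{uniform continuity of $\nabla_e\tilde u$} is exactly what controls the gradient of the interpolant on each simplex, via a Rolle-type argument along the edges --- an estimate unavailable for merely Lipschitz $u$. Finally, a separate and nontrivial lemma (Lemma \ref{piecewise linear functions are minima of support functions}) shows that piecewise linear functions belong to $\widehat{\mathcal H}(\sfe)$, by adding a large multiple of the distance function to each cone and proving by a compactness argument that the multiple can be chosen finite; your parenthetical ``one can even arrange each piece to be a genuine support function'' glosses over this step as well. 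To make your Stage 2 rigorous you would essentially have to reconstruct these three lemmas.
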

The proof is split into four main steps, which will be detailed in the next paragraphs.
\subsection{$ \mathcal{L}(\sfe)\subseteq\widehat{\mathcal{H}}(\sfe) $}
First of all, we are going to prove that piecewise linear functions can be written as finite minima of
support functions.
\begin{lem}\label{piecewise linear functions are minima of support functions}
Let $ f\in\mathcal{L}(\mathbb{R}^n) $. Then there exist $ m\in\mathbb{N} $ and $ h_{K_1},
\ldots,h_{K_m}\in\mathcal{H}(\mathbb{R}^n) $ such that
$$ 
f=\bigwedge_{i=1}^mh_{K_i}. 
$$
In particular, $ \mathcal{L}(\sfe)\subseteq\widehat{\mathcal{H}}(\sfe) $.
\end{lem}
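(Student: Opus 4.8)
The plan is to prove the statement for a single linear piece and then handle the general piecewise-linear function by combining pieces.

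First I would reduce to understanding a single linear function. If $L\colon\R^n\to\R$ is linear, then $L=\langle\cdot,a\rangle$ for some $a\in\R^n$, and $L=h_{\{a\}}$, so $L\in\mathcal H(\R^n)$ trivially (a singleton is a convex body). Thus every linear function is itself a support function, and the content of the lemma is in how the ``pieces glue.'' Given $f\in\mathcal L(\R^n)$ with cones $C_1,\dots,C_m$ and linear functions $L_1,\dots,L_m$ such that $f=L_i$ on $C_i$, I claim that
\begin{equation*}
f=\bigwedge_{i=1}^m L_i \quad\text{on all of }\R^n.
\end{equation*}
This is the crux. To see it, fix $x\in\R^n$; then $x\in C_j$ for some $j$, so $f(x)=L_j(x)\ge\bigwedge_i L_i(x)$. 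For the reverse inequality I must show $f(x)\le L_i(x)$ for every $i$, i.e. $L_j(x)\le L_i(x)$ for all $i$. This is where convexity of $f$ enters: a piecewise-linear function that agrees with distinct linear functions on cones covering $\R^n$ and is globally continuous need not be convex in general, so I expect the lemma as literally stated may implicitly restrict to the situation arising in the paper, or I should argue that $f$ being a finite \emph{minimum} of the $L_i$ forces exactly the convex (concave) case. The honest route: observe that each $L_i$ is $1$-homogeneous and $f$ is $1$-homogeneous (it agrees with $1$-homogeneous functions on cones covering space, hence is $1$-homogeneous), and that along any ray $f$ restricted is linear; continuity across the common boundary of adjacent cones, together with the fact that on each cone $f$ equals a single linear function, is what I need to leverage. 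The cleanest formulation I would adopt is: since the $C_i$ have pairwise disjoint interiors and cover $\R^n$, on the boundary hyperplane-piece between $C_j$ and $C_i$ the functions $L_j$ and $L_i$ agree, and from there I can push the comparison $L_j\le L_i$ across the fan of cones by a connectedness/path argument on $S^{n-1}$, walking from the interior of $C_j$ to the interior of $C_i$ through a chain of adjacent cones and using at each crossing that the relevant linear functions coincide on the shared face. I expect this combinatorial-geometric gluing argument to be the main obstacle; one must be careful that ``adjacent'' cones share an $(n-1)$-dimensional face and that the fan is connected, which may require subdividing the $C_i$ first.

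Once the identity $f=\bigwedge_{i=1}^m L_i$ is established with each $L_i=h_{K_i}$ for a (singleton or more general) convex body $K_i$, the first assertion follows immediately with $h_{K_i}=L_i$.

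For the ``in particular'' clause I would simply restrict everything to $\sfe$: given $g\in\mathcal L(\sfe)$, by definition $g$ is the restriction of some $f\in\mathcal L(\R^n)$, and restriction commutes with pointwise minimum, so $g=\bigwedge_{i=1}^m (h_{K_i}|_{\sfe})$ with each $h_{K_i}|_{\sfe}\in\mathcal H(\sfe)$; hence $g\in\widehat{\mathcal H}(\sfe)$ by the definition of $\widehat{\mathcal H}(\sfe)$. No growth or Lipschitz estimate is needed here beyond the already-noted inclusions $\mathcal L(\sfe)\subseteq\Lip$ and $\mathcal H(\sfe)\subseteq\Lip$. The only real work, to repeat, is the global identification of a piecewise-linear function with the minimum of its linear pieces, and making precise which piecewise-linear functions this applies to.
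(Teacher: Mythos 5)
Your central claim---that $f=\bigwedge_{i=1}^m L_i$ where the $L_i$ are the linear pieces of $f$---is false, and this sinks the whole argument. A piecewise linear function equals the \emph{minimum} of its linear pieces only when it is concave. The simplest counterexample is $f(x_1,\dots,x_n)=|x_1|$, with cones $C_1=\{x_1\ge0\}$, $C_2=\{x_1\le0\}$ and pieces $L_1(x)=x_1$, $L_2(x)=-x_1$: here $L_1\wedge L_2=-|x_1|\ne f$. Your proposed connectedness argument would have to establish $L_j(x)\le L_i(x)$ for all $i$ whenever $x\in C_j$, which is precisely the (false) assertion that every $f\in\mathcal{L}(\R^n)$ is concave; the fact that adjacent pieces agree on their shared face gives no control on how they compare away from that face. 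You half-recognise the problem but then suggest the lemma ``implicitly restricts'' to a special case---it does not, and it cannot, since the whole point is to cover all of $\mathcal{L}(\sfe)$ so that the density argument of Section 3 goes through.

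The idea you are missing is that the $K_i$ need not be singletons: one must build, for each cone $C_i$, a genuinely nonlinear support function that equals $f$ on $C_i$ and \emph{dominates} $f$ everywhere else; the minimum of these then recovers $f$. Concretely, the paper sets $\tilde f=f-L_i$ (which vanishes on $C_i$) and lets $g$ be the distance function to $C_i$; since $C_i$ is a closed convex cone with apex at the origin, $g$ is $1$-homogeneous and convex, hence $g=h_K$ for some convex body $K$. A compactness-plus-Lipschitz argument on $\sfe$ shows $cg\ge\tilde f$ for some constant $c>0$ (if not, one finds points $x_\nu$ with $g(x_\nu)<\tilde f(x_\nu)/\nu$, and comparing with the metric projection onto $C_i$ forces $\nu<L(\tilde f)$ for all $\nu$, a contradiction). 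Then $h_{K_i}:=cg+L_i=h_{cK+a_i}$ is a support function with $h_{K_i}\ge f$ on $\R^n$ and $h_{K_i}=f$ on $C_i$, and $f=\bigwedge_i h_{K_i}$. Your treatment of the ``in particular'' clause (restriction to $\sfe$ commutes with minima) is fine, but the main assertion needs this construction, not the identification of $f$ with the minimum of its own linear pieces.
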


\begin{proof}
For $ f\in\mathcal{L}(\mathbb{R}^n) $, there are closed convex cones $ C_1,\ldots,C_m $ with vertex at 
the origin and pairwise disjoint interiors such that
$$ \bigcup_{i=1}^mC_i=\mathbb{R}^n, $$
and $ f=L_i $ is linear on $ C_i $, for $ i=1,\ldots,m $.

We will now focus on the cone $ C_1 $. Consider $ \tilde{f}=f-L_1 $. Let 
$P_{C_1}\colon\R^n\to\R^n$ denote the
so called metric projection onto $C_1$: for every $x\in\R^n$, $P_{C_1}(x)$ is the unique point in $C_1$ such that
$$
\|x-P_{C_1}(x)\|=\min_{z\in C_1}\|x-z\|.
$$
As $C_1$ is closed and convex, this function is well defined.
We also define the function $ g:\mathbb{R}^n\rightarrow\mathbb{R} $ by
$$ 
g(x)=\|x-P_{C_1}(x)\|=\min_{z\in C_1}\|x-z\|, 
$$
for every $ x\in\mathbb{R}^n $; this is the distance function from the cone $ C_1 $. As $C_1$ is a convex cone with apex at the origin, $ g $ is 1-homogeneous and subadditive, hence it is also convex. These properties imply the existence of a
convex body $ K\in\mathcal{K}^n $ such that $ g=h_K $ (see \cite[Section 1.8]{Schneider}).

We prove that there exists a suitable constant $ c>0 $ such that
\begin{equation}\label{existence of the constant c}
cg(x)\geq\tilde{f}(x),
\end{equation}
for every $ x\in\mathbb{R}^n $. Suppose this to be false; then for every $ c>0 $ there exists a point
$ x_c\in\mathbb{R}^n $ such that $ cg(x_c)<\tilde{f}(x_c) $. Choosing $ c=i $, $ i\in\mathbb{N} $,
we construct a sequence $ \{x_i\}\subseteq\mathbb{R}^n $ satisfying
\begin{equation}\label{inequality for x_i}
g(x_i)<\frac{1}{i}\tilde{f}(x_i),
\end{equation}
for every $ i\in\mathbb{N} $. Because this inequality is strict, we have that $ x_i\neq 0 $ for every
$ i\in\mathbb{N} $. From the 1-homogeneity we get
$$ g\left(\frac{x_i}{\|x_i\|}\right)<\frac{1}{i}\tilde{f}\left(\frac{x_i}{\|x_i\|}\right). $$
This means that in \eqref{inequality for x_i} we may assume that
$ \{x_i\}\subseteq \sfe $ and, up to passing to a subsequence, $ x_i\rightarrow x $ as $ i\rightarrow\infty $, for some $ x\in\sfe $.

We observe that $ x\in C_1 $. In fact, if $ x\in\mathbb{R}^n\backslash C_1 $, then letting
$ i\rightarrow\infty $ in \eqref{inequality for x_i} we would have $ 0<g(x)\leq 0 $, a contradiction.

Let $ \tilde{x}_i=P_{C_1}(x_i) $; by continuity of the projection, $ \tilde{x}_i\rightarrow x $ as
$ i\rightarrow\infty $. From \eqref{inequality for x_i}, using the fact that $ \tilde{f}(\tilde{x}_i)=0 $
(since $ \tilde{x}_i\in C_1 $) and setting $ \widetilde{L}=L(\tilde{f}) $, we get
$$ \|x_i-\tilde{x}_i\|=g(x_i)<\frac{1}{i}\left[\tilde{f}(x_i)-\tilde{f}(\tilde{x}_i)\right]\leq\frac{1}{i}
\widetilde{L}\|x_i-\tilde{x}_i\|, $$
for every $ i\in\mathbb{N} $. Since $ x_i\not\in C_1 $ (because of \eqref{inequality for x_i} and the
non-negativity of $ g $), whereas $ \tilde{x}_i\in C_1 $, we have $ x_i\neq\tilde{x}_i $, and so the last
inequality yields $ i<\widetilde{L} $, for every $ i\in\mathbb{N} $; letting $ i\rightarrow\infty $ we
obtain a contradiction. Then there must be a constant $ c>0 $ such that
\eqref{existence of the constant c} holds for every $ x\in\mathbb{R}^n $.

Therefore,
$$ f_1:=cg+L_1\geq\tilde{f}+L_1=f $$
on $ \mathbb{R}^n $, with $ f_1=L_1=f $ on the cone $ C_1 $. Furthermore, if $ L_1(x)=\langle x,a_1
\rangle $, we have that
$$ f_1=cg+L_1=ch_K+h_{\{a_1\}}=h_{cK+a_1}=:h_{K_1} $$
is a support function.

We repeat the process for each cone $ C_i $, $ i=2,\ldots,m $, building support functions
$ h_{K_i}:\mathbb{R}^n\rightarrow\mathbb{R} $ such that $ h_{K_i}\geq f $ on $ \mathbb{R}^n $ and
$ h_{K_i}=f $ on $ C_i $. Thus we can write
$$ f=\bigwedge_{i=1}^m h_{K_i}. $$
\end{proof}

\subsection{Approximation of $ C^1 $ functions by piecewise linear functions}
Piecewise linear functions can be used to approximate $ C^1 $ functions with respect to the topology
$ \tau $, as stated in the following lemma.
\begin{lem}\label{approximation of C^1 functions}
Let $ u\in C^1(\sfe) $. Then there exists a sequence $ \{f_i\}\subseteq\mathcal{L}(\sfe) $ such
that $ f_i\xrightarrow[\tau]{}u $.
\end{lem}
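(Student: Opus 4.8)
\textbf{Proof strategy for Lemma \ref{approximation of C^1 functions}.}

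The plan is to work with the $1$-homogeneous extension $\tilde u$ on $\R^n$ (restricted, say, to a neighbourhood of $\sfe$ or to the annulus $\tfrac12\le\|x\|\le2$), produce a sequence of piecewise linear functions $F_i$ on $\R^n$ that are $1$-homogeneous and that converge to $\tilde u$ together with their Euclidean gradients in a controlled way, and then restrict everything back to $\sfe$ using \eqref{connection between the gradients} to translate Euclidean-gradient convergence into spherical-gradient convergence. So the first step is to set up a triangulation: since $u\in C^1(\sfe)$, the extension $\tilde u$ is $C^1$ on $\R^n\setminus\{0\}$, and one takes a sequence of successively finer triangulations of $\sfe$ (e.g.\ obtained by repeated barycentric subdivision of an initial simplicial mesh, or by projecting a fine simplicial mesh near $\sfe$), with mesh size $\delta_i\to0$. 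Coning each top-dimensional simplex of the triangulation with the origin yields a finite family of closed simplicial cones $C_1^{(i)},\dots,C_{m_i}^{(i)}$ with pairwise disjoint interiors covering $\R^n$.

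The second step is to define $F_i$ on each cone $C_j^{(i)}$ to be the linear function that agrees with $\tilde u$ at the vertices of the corresponding spherical simplex (equivalently, at the generating rays): this is the unique linear interpolant, it is automatically $1$-homogeneous, the resulting $F_i$ is globally continuous because neighbouring cones share their common boundary rays and the interpolants agree there, and hence $f_i:=F_i|_{\sfe}\in\mathcal L(\sfe)$. The third step is the quantitative estimate. Uniform convergence $\|f_i-u\|_\infty\to0$ is immediate from uniform continuity of $u$ on the compact set $\sfe$ and $\delta_i\to0$. For the gradients, one uses the standard finite-element interpolation estimate for linear interpolation of a $C^1$ function on a simplex: on each simplex of the triangulation, $\|\nabla_e F_i-\nabla_e\tilde u\|_\infty$ is bounded by the modulus of continuity of $\nabla_e\tilde u$ at scale $\delta_i$ times a constant depending only on the shape-regularity of the simplices. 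This gives both $\nabla u_i(x)\to\nabla u(x)$ at every point where $f_i$ is differentiable (all but an $H^{n-1}$-null set, namely the skeleton of the triangulations) — in fact uniformly — and the uniform bound $\|\nabla f_i\|\le C$, the three conditions defining $\tau$-convergence; here \eqref{connection between the gradients} is invoked to pass between $\nabla_e\tilde u$ and $\nabla u$, and to see $\|\nabla f_i\|\le\|\nabla_e F_i\|\le\sup_{\sfe}\|\nabla_e\tilde u\|+1$ for $i$ large.

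The main obstacle is ensuring \emph{shape-regularity} of the triangulations: the finite-element interpolation bound for $\nabla_e F_i$ degenerates if the simplices are allowed to become arbitrarily thin (the constant blows up with the inverse of the smallest angle / the aspect ratio). One must therefore choose the refinement procedure so that all simplices stay uniformly non-degenerate — e.g.\ by using a refinement scheme known to preserve a lower bound on the angles (Whitney's or longest-edge bisection with the appropriate bookkeeping), or simply by starting from a single mesh and scaling, and only then handling the fact that these are spherical simplices coned to the origin rather than flat ones, which introduces a curvature correction that is itself $O(\delta_i)$ and hence harmless. Apart from this regularity issue the argument is routine: the $1$-homogeneity of the cone construction is what guarantees $F_i\in\mathcal L(\R^n)$ (matching the definition of piecewise linear function in the Preliminaries), and combined with Lemma \ref{piecewise linear functions are minima of support functions} this will later let us squeeze $C^1$ functions — and eventually all of $\Lip$ — between finite minima of support functions.
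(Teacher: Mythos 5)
Your strategy is essentially the paper's proof: a $1$-homogeneous conical piecewise-linear interpolant of $\tilde u$ built over a shape-regular simplicial decomposition (the paper triangulates $\partial[-1,1]^n$ by replicating one fixed subdivision pattern at every mesh size, which is exactly how it secures the scale-independent constant you identify as the main obstacle), followed by the uniform and gradient estimates and a return to $\sfe$ via \eqref{connection between the gradients}. The one step you compress is the gradient bound: since the interpolation conditions live on $(n-1)$-dimensional simplices, the finite-element estimate only controls the derivatives of the error $\psi=\tilde u-f$ in directions tangent to each simplex's hyperplane (the paper gets these via Rolle's theorem along the edges), and the remaining direction must be recovered from Euler's identity $\langle\nabla_e\psi(x),x\rangle=\psi(x)$ — already small by the uniform estimate — together with the uniform transversality of the radial direction to that hyperplane, which is precisely the extra argument the paper supplies.
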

To prove this we will need a preliminary observation, which in turn requires a definition: a partition
$ \mathcal{P} $ of a set $ Q\subseteq\mathbb{R}^n $ is called a \textit{simplicial partition} if it is
made up of simplices such that for every two of them, their intersection is either empty or a face (of any dimension
between $0$ and $n-1$) of both simplices. We can now point out the following fact.
\begin{rmk}\label{simplicial partition}
Let $ Q\subseteq\mathbb{R}^n $ be an $ n-$dimensional hypercube. Then there exists a simplicial 
partition $ \mathcal{P} $ of $ Q $ in $n$-dimensional simplices which induces isometric simplicial partitions in $(n-1)$-dimensional simplices on the facets of $Q$.
\end{rmk}
\begin{proof}
We proceed by induction on the dimension $ n $. If $ n=2 $, then $ Q $ is a square, and we can choose
$ \mathcal{P} $ to be the partition made up of the four simplices obtained by connecting the center of
the square to each of the four vertices.

Let $ n\geq 3 $. Since the facets of an $ n-$cube are $ (n-1)-$cubes, we can apply the inductive 
hypothesis to a facet $ F $ of $ Q $ to obtain a simplicial partition of it. We replicate this same partition 
on each facet of $ Q $. This gives a simplicial partition of the boundary $ \partial Q $ of $ Q $. We now connect the center of $Q$ to each vertex of every simplex in the aforementioned partition of $ \partial Q $, through a segment. This gives a simplicial partition of $ Q $ with the required property.
\end{proof}
This allows us to prove the lemma stated above.
\begin{proof}[Proof of Lemma \ref{approximation of C^1 functions}]
Let $ u\in C^1(\sfe) $ and consider its 1-homogeneous extension $ \tilde{u} $ to $ \mathbb{R}^n$:
$$ 
\tilde{u}(x):=\|x\|\cdot u\left(\frac{x}{\|x\|}\right),\;x\in\mathbb{R}^n\backslash\{0\},\quad \tilde u(0)=0. 
$$
Then $ \tilde{u}\in C^1(\mathbb{R}^n\backslash\{0\}) $. In particular, $ \tilde{u}\in C^1(D) $, where
$$ 
D=\left\{x\in\mathbb{R}^n\;\middle|\;1\leq\|x\|\leq\sqrt{n}\right\}. 
$$

Fix $ \varepsilon>0 $. Since $ \tilde{u} $ and its Euclidean gradient $ \nabla_e\tilde{u} $ are uniformly
continuous on $ D $, there exists $ \delta>0 $ such that for every $ x,y\in D $ with $ \|x-y\|\leq\delta $
we have
\begin{equation}\label{uniform continuity of u tilde}
\vert\tilde{u}(x)-\tilde{u}(y)\vert<\varepsilon
\end{equation}
and
\begin{equation}\label{uniform continuity of the gradient of u tilde}
\|\nabla_e\tilde{u}(x)-\nabla_e\tilde{u}(y)\|<\varepsilon.
\end{equation}

Consider now the hypercube $ \Omega=[-1,1]^n $ centered at the origin with edge of length $ 2 $.
For each coordinate axis, we draw hyperplanes orthogonal to such axis so that $ \Omega $ is cut into
hypercubes with edges of length $ \frac{1}{N} $, where $ N=\left\lceil\frac{\sqrt{n}}{\delta}\right\rceil $
($ \lceil\cdot\rceil $ denotes the ceiling function). Note that these hypercubes all have the same
diameter $ d\leq\delta $.

We apply Remark \ref{simplicial partition} to the facets of these hypercubes (which are
$ (n-1)-$dimensional hypercubes) that are contained in $ \partial\Omega $: this determines a
simplicial partition $ \{\Delta_1,\ldots,\Delta_m\} $ in $ (n-1)-$simplices of $ \partial\Omega $.
For every $i=1,\dots,m$, let
$$
C_i=\{tx\colon t\ge0,\, x\in\Delta_i\}.
$$
Then $C_1,\dots,C_m$ are closed convex cones with pairwise disjoint interiors, which form a partition 
of the whole space $ \mathbb{R}^n $. Note that since the annulus $ D $ contains all the simplices $ \Delta_i $ and
$ d\leq\delta $, formulas \eqref{uniform continuity of u tilde} and
\eqref{uniform continuity of the gradient of u tilde} are satisfied for every $ x $ and $ y $ belonging
to the same simplex.

We consider linear maps $ L_i:C_i\rightarrow\mathbb{R} $, $ i=1,\ldots,m $, such that $ L_i $ 
coincides with $ \tilde{u} $ on each of the $ n $ vertices of $ \Delta_i $; these maps are uniquely
determined. Let $ f\in\mathcal{L}(\mathbb{R}^n) $ be the continuous function such that $ f=L_i $ on
$ C_i $, for $ i=1,\ldots,m $, and define $ \psi=\tilde{u}-f $.

For a fixed $ x\in D $, we have that $ x\in C_k $ for some $ k\in\{1,\ldots,m\} $, and we can write
$ x=\lambda x' $, with $ \lambda>0 $ and $ x'\in\Delta_k $. Choose an arbitrary vertex $ v $ of
$ \Delta_k $. Since $ \Delta_k $ is compact, there is a $ w\in\Delta_k $ such that
$$ \vert L_k(v)-L_k(w)\vert=\max_{z\in\Delta_k}\vert L_k(v)-L_k(z)\vert. $$
Since convex functions on convex polytopes attain their maximum at the vertices, $ w $ must be a
vertex of $ \Delta_k $.

Given that both $ \tilde{u} $ and $ f $ are 1-homogeneous, using the triangular inequality,
\eqref{uniform continuity of u tilde} and the fact that $ \tilde{u}=L_k $ on the vertices of $ \Delta_k $,
we get
\begin{eqnarray*}
\vert\psi(x)\vert&=&\lambda\vert\tilde{u}(x')-f(x')\vert=\lambda\vert\tilde{u}(x')-L_k(x')\vert<
\lambda\varepsilon+\lambda\vert L_k(v)-L_k(x')\vert\\
&\leq&\lambda\varepsilon+\lambda\vert L_k(v)-L_k(w)\vert=\lambda\varepsilon+\lambda\vert
\tilde{u}(v)-\tilde{u}(w)\vert<2\lambda\varepsilon=2\frac{\|x\|}{\|x'\|}\varepsilon\\
&\leq& 2\sqrt{n}
\varepsilon.
\end{eqnarray*}
Therefore,
\begin{equation}\label{uniform approximation}
\|\psi\|_{\infty,D}<2\sqrt{n}\varepsilon,
\end{equation}
where $ \|\cdot\|_{\infty,D} $ denotes the uniform norm on $ D $.

We now turn to the gradient $ \nabla_e\psi $. Fix arbitrarily $ k\in\{1,\ldots,m\} $ and $ x\in
\textnormal{relint}\left(\Delta_k\right) $, the relative interior of $ \Delta_k $ (i.e. the interior of 
$\Delta_k$ as a subset of an $(n-1)$-dimensional affine hyperplane of $\R^n$). Then $ \nabla_e\psi(x) $
exists. We choose a vertex of $ \Delta_k $ and consider the $ n-1 $ edges $ l_1,\ldots,l_{n-1} $
incident to it; since $ \Delta_k $ is a simplex, these edges lie on linearly independent directions
$ \nu_1,\ldots,\nu_{n-1} $. Note that the restriction of $ \psi $ to each $ l_i $ can be seen as a
function of one variable which is continuous on $ l_i $, differentiable in its relative interior and satisfies
$ \psi=0 $ at the ends of $ l_i $. Hence there exist points $ z_i\in l_i $ such that
$$ 
\frac{\partial\psi}{\partial\nu_i}(z_i)=0, 
$$
for every $ i=1,\ldots,n-1 $. Using the fact that $ f|_{C_k} $ is linear and the Cauchy-Schwarz
inequality, for every $ i=1,\ldots,n-1 $ we get
\begin{eqnarray}\label{small derivatives in the first system}
\Bigg|\frac{\partial\psi}{\partial\nu_i}(x)\Bigg|&=&\Bigg|\frac{\partial\psi}{\partial\nu_i}(x)-
\frac{\partial\psi}{\partial\nu_i}(z_i)\Bigg|=\Bigg|\frac{\partial\tilde{u}}{\partial\nu_i}(x)-f(\nu_i)-
\frac{\partial\tilde{u}}{\partial\nu_i}(z_i)+f(\nu_i)\Bigg|\nonumber\\
&=&\vert\langle\nabla_e\tilde{u}(x)-\nabla_e\tilde{u}(z_i),\nu_i\rangle\vert\leq
\|\nabla_e\tilde{u}(x)-\nabla_e\tilde{u}(z_i)\|<\varepsilon,
\end{eqnarray}
where the last inequality follows from \eqref{uniform continuity of the gradient of u tilde}.

Let $ H $ be the hyperplane passing through the $ n $ vertices of $ \Delta_k $. Since $ \Delta_k
\subseteq\partial\Omega $, the exterior unit normal vector $ N $ to $ H $ is of the form $ N=\pm
e_{i_k} $, for some $ i_k\in\{1,\ldots,n\} $; for the sake of simplicity, let us assume $ N=e_n $, since
the general case can be dealt with analogously. We now observe that both $ \{\nu_1,\ldots,
\nu_{n-1}\} $ and $ \{e_1,\ldots,e_{n-1}\} $ are bases of $ H $; in particular, there exist numbers
$ \alpha_{ij}\in\mathbb{R} $, $ i,j=1,\ldots,n-1 $, such that
$$ e_i=\sum_{j=1}^{n-1}\alpha_{ij}\nu_j, $$
for every $ i=1,\ldots,n-1 $. Therefore, for $ i=1,\ldots,n-1 $ we have
\begin{equation}\label{small derivatives in the second system}
\Bigg|\frac{\partial\psi}{\partial x_i}(x)\Bigg|=\vert\langle\nabla_e\psi(x),e_i\rangle\vert\leq
\sum_{j=1}^{n-1}\vert\alpha_{ij}\vert\cdot\Bigg|\frac{\partial\psi}{\partial\nu_j}(x)\Bigg|
<M\varepsilon,
\end{equation}
where we have used \eqref{small derivatives in the first system} and we have set
$$ M=\max_{i\in\{1,\ldots,n-1\}}\sum_{j=1}^{n-1}\vert\alpha_{ij}\vert. $$
Note that the $ \alpha_{ij}$'s, and thus $ M $, do not depend on $ \varepsilon $, since they are 
determined by the $ \nu_i$'s, which are in turn determined by the simplicial partitions of the
$ (n-1)-$cubes contained in $ \partial\Omega $; the length $ \frac{1}{N} $ of the edge of such cubes
depends on $ \varepsilon $, but the angles appearing in the aformentioned partitions,
hence the $ \nu_i$'s, do not.

We now write
$$ \nabla_e\psi(x)=\langle\nabla_e\psi(x),e_1\rangle e_1+\ldots+\langle\nabla_e\psi(x),e_{n-1}
\rangle e_{n-1}+\langle\nabla_e\psi(x),e_n\rangle e_n, $$
which, together with \eqref{small derivatives in the second system}, implies
\begin{equation}\label{bound for the gradient of psi}
\|\nabla_e\psi(x)\|<M(n-1)\varepsilon+\Bigg|\frac{\partial\psi}{\partial x_n}(x)\Bigg|.
\end{equation}

Let us consider the radial direction $ r_x=\frac{x}{\|x\|} $. We have
$$ \frac{\partial\psi}{\partial r_x}(x)=\langle\nabla_e\psi(x),r_x\rangle=\frac{\psi(x)}{\|x\|}, $$
thanks to Euler's formula for homogeneous functions. This yields
\begin{equation}\label{small derivative in the radial direction}
\Bigg|\frac{\partial\psi}{\partial r_x}(x)\Bigg|=\frac{\vert\psi(x)\vert}{\|x\|}\leq\vert\psi(x)\vert
<2\sqrt{n}\varepsilon,
\end{equation}
because of \eqref{uniform approximation}. On the other hand, $ r_x=\langle r_x,e_1\rangle e_1
+\ldots+\langle r_x,e_n\rangle e_n $, hence
$$ \frac{\partial\psi}{\partial r_x}(x)=\langle\nabla_e\psi(x),r_x\rangle=\sum_{i=1}^{n-1}\left[
\langle r_x,e_i\rangle\cdot\frac{\partial\psi}{\partial x_i}(x)\right]+\langle r_x,e_n\rangle\cdot
\frac{\partial\psi}{\partial x_n}(x), $$
so that
\begin{eqnarray*}
\vert\langle r_x,e_n\rangle\vert\cdot\Bigg|\frac{\partial\psi}{\partial x_n}(x)\Bigg|&\leq&\Bigg|
\frac{\partial\psi}{\partial r_x}(x)\Bigg|+\sum_{i=1}^{n-1}\vert\langle r_x,e_i\rangle\vert
\cdot\Bigg|\frac{\partial\psi}{\partial x_i}(x)\Bigg|\\
&<&2\sqrt{n}\varepsilon+\sum_{i=1}^{n-1}M\varepsilon=[2\sqrt{n}+M(n-1)]\varepsilon, 
\end{eqnarray*}
where we have used \eqref{small derivative in the radial direction},
\eqref{small derivatives in the second system} and the Cauchy-Schwarz inequality. But since
$$ \vert\langle r_x,e_n\rangle\vert=\vert\langle r_x,N\rangle\vert=\frac{1}{\|x\|}\geq
\frac{1}{\sqrt{n}}, 
$$
we obtain
$$ \Bigg|\frac{\partial\psi}{\partial x_n}(x)\Bigg|<\left[2n+M\sqrt{n}(n-1)\right]\varepsilon. $$

From \eqref{bound for the gradient of psi} we conclude that
\begin{equation}\label{small gradient on the simplices}
\|\nabla_e\psi(x)\|<C\varepsilon,
\end{equation}
where
$$ C=2n+M(\sqrt{n}+1)(n-1). $$
Formula \eqref{small gradient on the simplices} holds for every $ x\in\textnormal{relint}\left(\Delta_k
\right) $. By 0-homogeneity of $ \nabla_e\psi $, this extends to every $ x $ in the interior of $ C_k $.
Since $ k\in\{1,\ldots,m\} $ was arbitrary, using \eqref{connection between the gradients} we have that
$$ \|\nabla\psi(x)\|<C\varepsilon, $$
for $ H^{n-1}$-a.e. $ x\in \sfe $.

In particular, we have proved that for every $ i\in\mathbb{N} $ we can find a piecewise linear function
$ f_i\in\mathcal{L}(\sfe) $ such that
$$ \|u-f_i\|_\infty<\frac{2\sqrt{n}}{i} $$
and
$$ \|\nabla u(x)-\nabla f_i(x)\|<\frac{C}{i}, $$
for $ H^{n-1}$-a.e. $ x\in \sfe $. Therefore, $ f_i\rightarrow u $ uniformly on $ \sfe $ and
$ \nabla f_i\rightarrow\nabla u $ $ H^{n-1}$-a.e. in $ \sfe $. Besides, for every $ i\in\mathbb{N} $
and $ H^{n-1}$-a.e. $ x\in \sfe $ we have
$$ \|\nabla f_i(x)\|<\frac{C}{i}+\|\nabla u(x)\|\leq C+\max_{\sfe}\|\nabla u\|, $$
so that $ f_i\xrightarrow[\tau]{}u $.
\end{proof}
\subsection{Approximation of Lipschitz functions by $ C^1 $ functions}
Functions in $ C^1(\sfe) $ can in turn be used to $ \tau-$approximate Lipschitz functions
defined on the sphere.
\begin{lem}\label{approximation of lipschitz functions}
Let $ u\in\Lip $. Then there exists a sequence $ \{u_i\}\subseteq C^1(\sfe) $ such that
$ u_i\xrightarrow[\tau]{}u $.
\end{lem}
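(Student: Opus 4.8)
The plan is to build the sequence by a smoothing procedure adapted to the sphere. The obvious first attempt is to extend $u$ to a globally Lipschitz function $\bar u$ on $\R^n$ via Theorem~\ref{McShane}, convolve $\bar u$ with a standard mollifier $\rho_\varepsilon$ on $\R^n$, and restrict the result to $\sfe$. This already yields condition~(1) --- uniform convergence --- and condition~(3) --- a uniform gradient bound, since $\nabla_e(\bar u*\rho_\varepsilon)=(\nabla_e\bar u)*\rho_\varepsilon$ has $L^\infty$-norm at most $L(\bar u)=L(u)$. But it fails to deliver condition~(2): the a.e.\ convergence of gradients produced this way holds with respect to Lebesgue measure on $\R^n$, whereas $\sfe$ is Lebesgue-null. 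This incompatibility between ambient mollification and the measure $H^{n-1}$ is the main obstacle, and it forces us to smooth ``along the sphere''.

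To do this I would convolve on the rotation group. Writing $\sfe=SO(n)/SO(n-1)$, the function $u$ corresponds to a right-$SO(n-1)$-invariant Lipschitz function on $SO(n)$; fix a smooth approximate identity $\{\rho_\varepsilon\}_{\varepsilon>0}$ on $SO(n)$ (nonnegative, of total Haar mass $1$, supported in neighbourhoods of the identity $I$ shrinking to $\{I\}$ as $\varepsilon\to0$) and set
$$
u_\varepsilon(x)=\int_{SO(n)}u(\varphi^{-1}x)\,\rho_\varepsilon(\varphi)\,d\varphi,\qquad x\in\sfe .
$$
Since on a compact Lie group the convolution of a smooth function with an $L^1$ function is smooth --- the derivatives may be made to fall on $\rho_\varepsilon$ --- and since the construction preserves right-$SO(n-1)$-invariance, we get $u_\varepsilon\in C^\infty(\sfe)\subseteq C^1(\sfe)$. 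Condition~(1) is immediate: $|u_\varepsilon(x)-u(x)|\le L(u)\sup\{\|\varphi^{-1}x-x\|:\varphi\in\supp\rho_\varepsilon,\ x\in\sfe\}\to0$. For~(2) and~(3) one differentiates under the integral sign --- the difference quotients of $x\mapsto u(\varphi^{-1}x)$ are uniformly bounded by a multiple of $L(u)$, and for fixed $x$ they converge for $\rho_\varepsilon\,d\varphi$-a.e.\ $\varphi$, because the set of $\varphi\in SO(n)$ at which $u$ is not differentiable at $\varphi^{-1}x$ is Haar-null --- to obtain
$$
\nabla u_\varepsilon(x)=\int_{SO(n)}\varphi\,\nabla u(\varphi^{-1}x)\,\rho_\varepsilon(\varphi)\,d\varphi .
$$
Then condition~(3) follows at once from \eqref{bound on the gradient of a lipschitz function}, since $\|\nabla u_\varepsilon(x)\|\le\int\|\nabla u(\varphi^{-1}x)\|\,\rho_\varepsilon(\varphi)\,d\varphi\le\sqrt n\,L(u)$, uniformly in $\varepsilon$ and $x$.

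For condition~(2) I would not aim at convergence of the whole family. The last display exhibits $\nabla u_\varepsilon$ as the $\rho_\varepsilon$-average of the rotated vector fields $x\mapsto\varphi\,\nabla u(\varphi^{-1}x)$; since $SO(n)$ acts on $L^1(\sfe;\R^n)$ by isometries and this action is strongly continuous, the $L^1$-valued curve $\varphi\mapsto\bigl(x\mapsto\varphi\,\nabla u(\varphi^{-1}x)\bigr)$ is continuous and equals $\nabla u$ at $\varphi=I$, so the standard approximate-identity estimate gives $\nabla u_\varepsilon\to\nabla u$ in $L^1(\sfe;\R^n)$ as $\varepsilon\to0$. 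Choosing $\varepsilon_i\downarrow0$ and passing to a subsequence along which this $L^1$-convergence upgrades to $H^{n-1}$-a.e.\ convergence, the functions $u_i:=u_{\varepsilon_i}$ satisfy (1)--(3), that is $u_i\xrightarrow[\tau]{}u$. (Alternatively one may mollify in local charts attached to a smooth finite partition of unity of $\sfe$: in each chart the pushed-forward measure is comparable to Lebesgue measure on $\R^{n-1}$, so chartwise a.e.\ convergence of gradients transfers to $H^{n-1}$-a.e.\ convergence on $\sfe$, and the estimates are the same.) The delicate point throughout is exactly the one flagged above --- producing a.e.\ convergence of gradients with respect to $H^{n-1}$ --- which is why the smoothing must be performed within the spherical (or group) geometry rather than in $\R^n$.
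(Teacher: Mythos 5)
Your proof is correct, but it takes a genuinely different route from the paper's. The paper does exactly what you dismiss as the ``obvious first attempt'': it extends $u$ to $\R^n$ by McShane's theorem, multiplies by a smooth cutoff supported in an annulus around $\sfe$, mollifies with standard Euclidean mollifiers $\varphi_\varepsilon$, obtains $L^1(\R^n)$ convergence of each partial derivative, and extracts a subsequence converging Lebesgue-a.e.\ on $\R^n$, from which it asserts $H^{n-1}$-a.e.\ convergence of the spherical gradients via \eqref{connection between the gradients}. The passage from Lebesgue-a.e.\ convergence in the ambient space to $H^{n-1}$-a.e.\ convergence on the Lebesgue-null set $\sfe$ is precisely the point you flag, and it is indeed the tersest step of the published argument. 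Your alternative --- convolving over $\mathbf{O}(n)$ (or $SO(n)$) with a smooth approximate identity, so that $u_\varepsilon=\int u(\varphi^{-1}\cdot)\rho_\varepsilon(\varphi)\,d\varphi$ is smooth, computing $\nabla u_\varepsilon$ by dominated convergence on difference quotients (using that the pushforward of Haar measure under $\varphi\mapsto\varphi^{-1}x$ is the normalized $H^{n-1}$, so the bad set of $\varphi$ is Haar-null), and then getting $L^1(\sfe;\R^n)$ convergence of the gradients from the strong continuity of the isometric action before passing to an a.e.-convergent subsequence --- keeps all the measure theory on $(\sfe,H^{n-1})$ from the start and so avoids the transfer problem entirely. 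What you pay for this is the (standard but heavier) apparatus of Haar measure, group convolution and the identification $\sfe=SO(n)/SO(n-1)$; what the paper's route buys is elementary Euclidean mollification at the cost of a delicate restriction-to-a-null-set step. Both your main argument and your chart-plus-partition-of-unity variant are sound; conditions (1) and (3) are handled the same way in spirit in both proofs, and the divergence is entirely in how condition (2) is secured.
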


\begin{proof}
Let $ u\in\Lip $ be a Lipschitz function with Lipschitz constant $ L $. As stated in Theorem
\ref{McShane}, such a function can be extended to a map $ u:\mathbb{R}^n\rightarrow\mathbb{R} $
defined by
$$ 
\bar{u}(x)=\max_{z\in \sfe}[u(z)-L\|x-z\|], 
$$
for $ x\in\mathbb{R}^n $, which is still Lipschitz continuous with the same Lipschitz constant $ L $.
To simplify the notations, the extension will still be denoted by the same symbol $ u $.

Consider the annuli
$$ \mathcal{C}_0=\left\{x\in\mathbb{R}^n:\frac{1}{3}\leq\|x\|\leq\frac{5}{3}\right\} $$
and
$$ \mathcal{C}_1=\left\{x\in\mathbb{R}^n:\frac{2}{3}\leq\|x\|\leq\frac{4}{3}\right\},$$
and let $ \eta:\mathbb{R}^n\rightarrow[0,1] $ be a $ C^\infty $ function with $ \textnormal{supp}(\eta)
\subseteq\mathcal{C}_0 $ such that $ \eta\equiv 1 $ on $ \mathcal{C}_1 $.

The function $ \tilde{u}:=u\cdot\eta $ is Lipschitz continuous; let $ \widetilde{L} $ be its Lipschitz
constant.

We will use mollifiers to build our approximating sequence. Consider $ \varphi:\mathbb{R}^n
\rightarrow[0,+\infty) $ defined, for $ z\in\mathbb{R}^n $, by
$$ \varphi(z)=\begin{cases}c\cdot e^{\frac{1}{\|z\|^2-1}}\;\;\mbox{ if }\|z\|<1,\\
                                  0\;\;\;\;\;\;\;\;\;\;\;\;\;\;\;\,\mbox{ if }\|z\|\geq 1,\end{cases} $$
where $ c>0 $ is a constant such that
$$ \int_{\mathbb{R}^n}\varphi(z)dz=1. $$
The function $ \varphi $ is $ C^\infty $ with support $ \overline{B_1(0)} $. For $ \varepsilon>0 $, we
set
$$ \varphi_\varepsilon(z)=\frac{1}{\varepsilon^n}\varphi\left(\frac{z}{\varepsilon}\right),\;z\in
\mathbb{R}^n. $$
Note that, for every $ \varepsilon>0 $, $ \varphi_\varepsilon\in C^\infty(\mathbb{R}^n) $, its
support is the set $ \overline{B_\varepsilon(0)} $ and the following properties hold:
\begin{itemize}
\item $ \int_{\mathbb{R}^n}\varphi_\varepsilon(z)dz=1 $;
\item $ \lim_{\varepsilon\rightarrow 0^+}\int_{\mathbb{R}^n\backslash B_\delta(0)}
\varphi_\varepsilon(z)dz=0 $, for every $ \delta>0 $.
\end{itemize}

We now consider the $ C^\infty $ functions
$$ \widetilde{u}_\varepsilon(x)=\varphi_\varepsilon\ast\tilde{u}(x)=\int_{\mathbb{R}^n}\tilde{u}(x-y)
\varphi_\varepsilon(y)dy,\;x\in\mathbb{R}^n. $$

For $ x,y\in\mathbb{R}^n $, we have
$$ \vert\widetilde{u}_\varepsilon(x)-\widetilde{u}_\varepsilon(y)\vert=\Bigg|\int_{\mathbb{R}^n}
\varphi_\varepsilon(z)[\tilde{u}(x-z)-\tilde{u}(y-z)]dz\Bigg|\leq\widetilde{L}\|x-y\|. $$
Remembering \eqref{bound on the gradient of a lipschitz function}, this yields
\begin{equation}\label{the gradients are uniformly bounded}
\|\nabla\widetilde{u}_\varepsilon(x)\|\leq\sqrt{n}\cdot\widetilde{L},
\end{equation}
for every $ x\in\sfe $ and $ \varepsilon>0 $.

We also have that $ \widetilde{u}_\varepsilon\rightarrow u $ uniformly on $ \sfe $, as
$ \varepsilon\rightarrow 0 $. Indeed, since $ \tilde{u} $ is uniformly continuous, for a fixed
$ \varepsilon'>0 $ there is a $ \delta>0 $ such that
$$ \vert\tilde{u}(z_1)-\tilde{u}(z_2)\vert<\frac{\varepsilon'}{2}, $$
for every $ z_1,z_2\in\mathbb{R}^n $ satisfying $ \|z_1-z_2\|\leq\delta $. Therefore, for
$ \varepsilon>0 $ and $ x\in\sfe $ we get
\begin{eqnarray*}
\vert\widetilde{u}_\varepsilon(x)-u(x)\vert&=&\Bigg|\int_{\mathbb{R}^n}\tilde{u}(x-y)
\varphi_\varepsilon(y)dy-\int_{\mathbb{R}^n}\tilde{u}(x)\varphi_\varepsilon(y)dy\Bigg|\\
&\leq&\int_{\mathbb{R}^n}\varphi_\varepsilon(y)\vert\tilde{u}(x-y)-\tilde{u}(x)\vert dy
<\frac{\varepsilon'}{2}+\int_{\mathbb{R}^n\backslash B_\delta(0)}\varphi_\varepsilon(y)\vert 
\tilde{u}(x-y)-\tilde{u}(x)\vert dy. 
\end{eqnarray*}
Since
$$ \lim_{\varepsilon\rightarrow 0^+}\int_{\mathbb{R}^n\backslash B_\delta(0)}\varphi_\varepsilon(y)
dy=0, $$
there exists $ 0<\varepsilon_0<1 $ such that for every $ 0<\varepsilon<\varepsilon_0 $ we have
that, for $ x\in \sfe $,
\begin{eqnarray*}
\vert\widetilde{u}_\varepsilon(x)-u(x)\vert&<&\frac{\varepsilon'}{2}+\int_{\overline{B_1(0)}\backslash 
B_\delta(0)}\varphi_\varepsilon(y)\vert\tilde{u}(x-y)-\tilde{u}(x)\vert dy\\
&\leq&\frac{\varepsilon'}{2}+2M\int_{\overline{B_1(0)}\backslash B_\delta(0)}\varphi_\varepsilon(y)dy=
\frac{\varepsilon'}{2}+2M\int_{\mathbb{R}^n\backslash B_\delta(0)}\varphi_\varepsilon(y)dy<
\varepsilon',
\end{eqnarray*}
where
$$ M=\max_{z\in\overline{B_2(0)}}\vert\tilde{u}(z)\vert. $$
This proves that $ \widetilde{u}_\varepsilon\rightarrow u $ uniformly on $ \sfe $.

To show the $ H^{n-1}$-a.e. convergence of the gradients, for an arbitrary $ k\in\{1,\ldots,n\} $ we
evaluate, for $ x\in\mathbb{R}^n $,
\begin{equation}\notag
\begin{split}
\frac{\partial\widetilde{u}_\varepsilon}{\partial x_k}(x)&=\lim_{h\rightarrow 0}
\frac{\widetilde{u}_\varepsilon(x+he_k)-\widetilde{u}_\varepsilon(x)}{h}\\
&=\lim_{h\rightarrow 0}\int_{\mathbb{R}^n}\varphi_\varepsilon(y)\frac{\tilde{u}(x-y+he_k)-
\tilde{u}(x-y)}{h}dy\\
&=\int_{\mathbb{R}^n}\varphi_\varepsilon(y)\frac{\partial\tilde{u}}{\partial x_k}(x-y)dy=
\varphi_\varepsilon\ast\frac{\partial\tilde{u}}{\partial x_k}(x),
\end{split}
\end{equation}
where we have used the Dominated Convergence Theorem, which can be applied because of the
Lipschitz continuity of $ \tilde{u} $ and the Lebesgue integrability of $ \varphi_\varepsilon $. The
Lipschitz continuity of $ \tilde{u} $, together with the fact that $ \eta $ is compactly supported, also
implies $ \frac{\partial\tilde{u}}{\partial x_k}\in L^1\left(\mathbb{R}^n\right) $. The properties of the
$ \varphi_\varepsilon$'s then guarantee that
$$ \frac{\partial\widetilde{u}_\varepsilon}{\partial x_k}=\varphi_\varepsilon\ast\frac{\partial\tilde{u}}
{\partial x_k}\rightarrow\frac{\partial\tilde{u}}{\partial x_k}\mbox{ in }L^1\left(\mathbb{R}^n\right),
\mbox{ as }\varepsilon\rightarrow 0^+, $$
for $ k=1,\ldots,n $.

We now turn the family $ \{\widetilde{u}_\varepsilon\}_{\varepsilon>0} $ into a sequence
$ \{\widetilde{u}_i\}_{i\in\mathbb{N}} $ by choosing $ \varepsilon=1/i $, $ i\in\mathbb{N} $, and 
renaming $ \widetilde{u}_i:=\widetilde{u}_{1/i} $ for the sake of simplicity. Every sequence of functions
which converges in $ L^1 $ possesses a subsequence which converges a.e. to the same limit, hence
there is a subsequence
$$ \left\{\widetilde{u}_{i_j^{(1)}}\right\}_{j\in\mathbb{N}}\subseteq\{\widetilde{u}_i\}_{i\in
\mathbb{N}} $$
such that
$$ \frac{\partial\widetilde{u}_{i_j^{(1)}}}{\partial x_1}(x)\rightarrow\frac{\partial\tilde{u}}{\partial x_1}(x)
\mbox{ as }j\rightarrow\infty, $$
for a.e. $ x\in\mathbb{R}^n $, and
$$ \frac{\partial\widetilde{u}_{i_j^{(1)}}}{\partial x_k}\rightarrow\frac{\partial\tilde{u}}{\partial x_k}
\mbox{ in }L^1\left(\mathbb{R}^n\right)\mbox{ as }j\rightarrow\infty, $$
for $ k=2,\ldots,n $. We repeat the process for every $ 2\leq j\leq n $, finding sequences
$$ \left\{\widetilde{u}_{i_j^{(n)}}\right\}_{j\in\mathbb{N}}\subseteq\left\{\widetilde{u}_{i_j^{(n-1)}}
\right\}_{j\in\mathbb{N}}\subseteq\ldots\subseteq\left\{\widetilde{u}_{i_j^{(1)}}\right\}_{j\in\mathbb{N}}
\subseteq\{\widetilde{u}_i\}_{i\in\mathbb{N}}. $$
If we set $ u_j:=\widetilde{u}_{i_j^{(n)}} $, $ j\in\mathbb{N} $, we have that
$$ \frac{\partial u_j}{\partial x_k}(x)\rightarrow\frac{\partial\tilde{u}}{\partial x_k}(x)\mbox{ as }
j\rightarrow\infty, $$
for a.e. $ x\in\mathbb{R}^n $ and for all $ k=1,\ldots,n $. This implies
\begin{equation}\label{a.e. convergence of the gradients}
\nabla u_j(x)\rightarrow\nabla u(x)\mbox{ as }j\rightarrow\infty,
\end{equation}
for $ H^{n-1}$-a.e. $ x\in \sfe $, using \eqref{connection between the gradients} again.

Recalling that $ \{u_j\}_{j\in\mathbb{N}} \subseteq\{\widetilde{u}_\varepsilon\}_{\varepsilon>0} $, from 
the fact that $ \widetilde{u}_\varepsilon\rightarrow u $ uniformly on $ \sfe $ and the properties
\eqref{the gradients are uniformly bounded}, \eqref{a.e. convergence of the gradients} we conclude
that $ u_j\xrightarrow[\tau]{} u $.
\end{proof}
We have actually proved that $ C^\infty(\sfe) $ is $ \tau-$dense in $ \Lip $. However, for our
purposes Lemma \ref{approximation of lipschitz functions} will be sufficient.
\subsection{Density of $ \mathcal{L}(\sfe) $ in $ \Lip $}
Putting things together, we obtain the following density result.
\begin{lem}\label{density result}
The space $ \mathcal{L}(\sfe) $ is $ \tau-$dense in $ \Lip $.
\end{lem}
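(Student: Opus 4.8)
The plan is to chain together the three approximation lemmas already established in this section. Given $u\in\Lip$, I first invoke Lemma \ref{approximation of lipschitz functions} to produce a sequence $\{u_i\}\subseteq C^1(\sfe)$ with $u_i\xrightarrow[\tau]{}u$. This reduces the problem to showing that every $C^1$ function lies in the $\tau$-closure of $\mathcal{L}(\sfe)$.

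Next, for each fixed $i$, Lemma \ref{approximation of C^1 functions} gives a sequence $\{f_{i,j}\}_{j\in\N}\subseteq\mathcal{L}(\sfe)$ with $f_{i,j}\xrightarrow[\tau]{}u_i$ as $j\to\infty$. So every element of $C^1(\sfe)$ is a $\tau$-limit of piecewise linear functions, and in particular the $\tau$-closure of $\mathcal{L}(\sfe)$ contains $C^1(\sfe)$, hence contains $u$. The only subtlety is that $\tau$ is defined via a convergence rather than explicitly as a topology, so I should phrase the argument directly in terms of sequences: I want to exhibit a single sequence in $\mathcal{L}(\sfe)$ converging to $u$. This is done by a diagonal argument. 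From the uniform convergence $u_i\to u$ and the $\tau$-convergence $f_{i,j}\to u_i$, for each $i$ choose $j(i)$ large enough that $\|f_{i,j(i)}-u_i\|_\infty<1/i$, that $\|\nabla f_{i,j(i)}(x)-\nabla u_i(x)\|<1/i$ off a null set, and that the gradient bound coming from condition (3) in the definition of $\tau$ applied to the sequence $\{u_i\}$ is respected; then set $g_i=f_{i,j(i)}$.

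It remains to verify $g_i\xrightarrow[\tau]{}u$, i.e. the three conditions. For (1), $\|g_i-u\|_\infty\le\|g_i-u_i\|_\infty+\|u_i-u\|_\infty\to0$. For (3), since $u_i\xrightarrow[\tau]{}u$ there is a uniform bound $\|\nabla u_i(x)\|\le C$ for all $i$ and a.e. $x$; together with $\|\nabla g_i(x)-\nabla u_i(x)\|<1/i\le1$ this gives $\|\nabla g_i(x)\|\le C+1$ uniformly. For (2), the pointwise a.e. convergence $\nabla g_i(x)\to\nabla u(x)$ follows from $\nabla g_i(x)-\nabla u(x)=(\nabla g_i(x)-\nabla u_i(x))+(\nabla u_i(x)-\nabla u(x))$, where the first term is bounded by $1/i$ everywhere outside a null set and the second tends to $0$ a.e.\ because $u_i\xrightarrow[\tau]{}u$; the union of the countably many exceptional null sets is still null. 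The main (minor) obstacle is purely bookkeeping: ensuring that the countably many a.e.\ statements are organized so that a single common null set works, and that the diagonal choice $j(i)$ can indeed be made to satisfy all the required estimates simultaneously, which is possible precisely because each is an $\varepsilon$-type condition that $f_{i,j}\xrightarrow[\tau]{}u_i$ delivers for $j$ large.

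\begin{proof}
Let $u\in\Lip$. By Lemma \ref{approximation of lipschitz functions} there is a sequence $\{u_i\}\subseteq C^1(\sfe)$ with $u_i\xrightarrow[\tau]{}u$; in particular $\|u_i-u\|_\infty\to0$, $\nabla u_i(x)\to\nabla u(x)$ for $x$ outside a null set $E_0$, and there is $C>0$ with $\|\nabla u_i(x)\|\le C$ for all $i$ and a.e.\ $x$. For each fixed $i$, Lemma \ref{approximation of C^1 functions} provides a sequence $\{f_{i,j}\}_{j\in\N}\subseteq\mathcal{L}(\sfe)$ with $f_{i,j}\xrightarrow[\tau]{}u_i$ as $j\to\infty$. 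Hence we may choose $j(i)\in\N$ and a null set $E_i\subseteq\sfe$ such that, writing $g_i:=f_{i,j(i)}$,
$$
\|g_i-u_i\|_\infty<\frac1i,\qquad \|\nabla g_i(x)-\nabla u_i(x)\|<\frac1i\quad\text{for all }x\in\sfe\setminus E_i .
$$
Set $E=\bigcup_{i=0}^\infty E_i$, a null set. For $x\in\sfe\setminus E$ and every $i$ we have
$$
\|\nabla g_i(x)\|\le\|\nabla g_i(x)-\nabla u_i(x)\|+\|\nabla u_i(x)\|<\frac1i+C\le C+1,
$$
and
$$
\|\nabla g_i(x)-\nabla u(x)\|\le\|\nabla g_i(x)-\nabla u_i(x)\|+\|\nabla u_i(x)-\nabla u(x)\|\xrightarrow[i\to\infty]{}0 .
$$
Finally $\|g_i-u\|_\infty\le\|g_i-u_i\|_\infty+\|u_i-u\|_\infty\to0$. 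Therefore $g_i\xrightarrow[\tau]{}u$ with $g_i\in\mathcal{L}(\sfe)$, proving that $\mathcal{L}(\sfe)$ is $\tau$-dense in $\Lip$.
\end{proof}
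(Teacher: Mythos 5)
Your route differs from the paper's: the paper argues purely topologically (an open $\tau$-neighbourhood $U$ of $u$ contains some $v\in C^1(\sfe)$ by Lemma \ref{approximation of lipschitz functions}, and, being open, $U$ is also a neighbourhood of $v$, hence contains some $f\in\mathcal{L}(\sfe)$ by Lemma \ref{approximation of C^1 functions}), whereas you try to exhibit a single sequence in $\mathcal{L}(\sfe)$ that $\tau$-converges to $u$ via a diagonal extraction. The latter is a perfectly reasonable, and in some ways more concrete, goal, but as written it has a gap at the key step.

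The gap is the selection of $j(i)$ with $\|\nabla f_{i,j(i)}(x)-\nabla u_i(x)\|<1/i$ for all $x$ outside a null set $E_i$. The statement of Lemma \ref{approximation of C^1 functions} only gives you condition (2) of the definition of $\tau$: $\nabla f_{i,j}(x)\to\nabla u_i(x)$ for $H^{n-1}$-a.e.\ $x$ as $j\to\infty$, i.e.\ pointwise a.e.\ convergence with an $x$-dependent rate, together with a uniform bound. Pointwise a.e.\ convergence does not imply that any single $j$ makes the gradient difference small in the essential supremum (think of gradient discrepancies of height $1$ supported on sets of shrinking measure: they tend to $0$ a.e.\ but their essential sup never drops below $1$). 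This is not mere bookkeeping: it is exactly the reason a.e.\ convergence of a doubly indexed family cannot in general be diagonalised, so the step as justified would fail. There are two clean repairs. (a) Use not the statement but the proof of Lemma \ref{approximation of C^1 functions}, which actually establishes the quantitative estimates $\|u_i-f_{i,j}\|_\infty<2\sqrt{n}/j$ and $\|\nabla u_i(x)-\nabla f_{i,j}(x)\|<C_i/j$ for a.e.\ $x$ (with $C_i$ independent of $j$); this essential-uniform convergence makes your choice of $j(i)$ legitimate and the rest of your verification of conditions (1)--(3) then goes through exactly as you wrote it. (b) Alternatively, drop the diagonal and argue as the paper does with open neighbourhoods, which sidesteps the issue entirely. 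With either repair your argument is correct.
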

\begin{proof}
We have already noted that $ \mathcal{L}(\sfe)\subseteq\Lip $.

Let $ u\in\Lip $ and let $ U\subseteq\Lip $ be an open neighbourhood of $ u $ (with respect to
$ \tau $). Because of Lemma \ref{approximation of lipschitz functions}, $ U $ contains a function
$ v\in C^1(\sfe) $. Moreover, since $ U $ is also an open neighbourhood of $ v $, from Lemma
\ref{approximation of C^1 functions} we have that there is $ f\in\mathcal{L}(\sfe) $ such that
$ f\in U $.
\end{proof}
The tools developed throughout this section allow us now to prove Proposition
\ref{valuations are determined by their values on support functions}, which is the key ingredient
for the proof of Theorem \ref{characterization result}.
\begin{proof}[Proof of Proposition \ref{valuations are determined by their values on support functions}]
Let $ \mu_1,\mu_2:\Lip\rightarrow\mathbb{R} $ be continuous valuations such that $ \mu_1=\mu_2 $
on $ \mathcal{H}(\sfe) $. Then $ \mu_1=\mu_2 $ on $ \widehat{\mathcal{H}}(\sfe) $, as can be
proved by induction, using the valuation property and Lemma
\ref{maximum and minimum of support functions}. From Lemma
\ref{piecewise linear functions are minima of support functions} we obtain that $ \mu_1=\mu_2 $ on
$ \mathcal{L}(\sfe) $ too.

Let now $ u\in\Lip $. From Lemma \ref{density result}, there exists a sequence $ \{f_i\}\subseteq
\mathcal{L}(\sfe) $ such that $ f_i\xrightarrow[\tau]{}u $. Since $ \mu_1 $, $ \mu_2 $ are continuous,
$$ \mu_1(u)=\lim_{i\rightarrow\infty}\mu_1(f_i)=\lim_{i\rightarrow\infty}\mu_2(f_i)=\mu_2(u), $$
hence the conclusion.
\end{proof}

\section{The homogeneous decomposition for continuous and dot product invariant valuations}\label{section The homogeneous decomposition}

This section is devoted to the proof of the following result.

\begin{thm}\label{McMullen on Lip}
Let $ \mu:\Lip\rightarrow\mathbb{R} $ be a continuous and dot product invariant valuation. Then
there exist continuous and dot product invariant valuations $ \mu_0,\ldots,\mu_n:\Lip\rightarrow
\mathbb{R} $ such that $ \mu_i $ is $ i-$homogeneous, for $ i=0,\ldots,n $, and
\begin{equation}\label{McMullen decomposition formula on Lip}
\mu(\lambda u)=\sum_{i=0}^n\lambda^i\mu_i(u),
\end{equation}
for every $ u\in\Lip $ and $ \lambda>0 $.
\end{thm}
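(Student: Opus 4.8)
The plan is to transfer the McMullen homogeneous decomposition from $\mathcal K^n$ to $\Lip$ via the approximation machinery of Section \ref{section Approximation}. First I would restrict $\mu$ to $\mathcal H(\sfe)$ and use Lemma \ref{moving valuations}: the functional $\nu(K)=\mu(h_K)$ is a continuous, translation invariant valuation on $\mathcal K^n$. Applying Theorem \ref{McMullen} yields continuous, translation invariant, $i$-homogeneous valuations $\nu_0,\dots,\nu_n$ on $\mathcal K^n$ with $\nu(\lambda K)=\sum_{i=0}^n\lambda^i\nu_i(K)$. Transporting back, set $\widehat\mu_i(h_K):=\nu_i(K)$ for $h_K\in\mathcal H(\sfe)$; each $\widehat\mu_i$ is a $\tau$-continuous (by Lemma \ref{continuity of a valuation on H}), dot product invariant (translation invariance of $\nu_i$ pulls back), $i$-homogeneous valuation on $\mathcal H(\sfe)$, and $\mu(\lambda h)=\sum_i\lambda^i\widehat\mu_i(h)$ for all $h\in\mathcal H(\sfe)$, $\lambda>0$.

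The next step is to extend each $\widehat\mu_i$ from $\mathcal H(\sfe)$ to all of $\Lip$ as a continuous valuation. For this I would extract the $\widehat\mu_i$ by a finite-difference / Vandermonde argument: choosing $n+1$ distinct values $\lambda_0,\dots,\lambda_n>0$, the identity $\mu(\lambda_j u)=\sum_{i=0}^n\lambda_j^i\mu_i(u)$ (which we want to hold on $\Lip$) forces
\begin{equation}\label{vandermonde extraction}
\mu_i(u)=\sum_{j=0}^n a_{ij}\,\mu(\lambda_j u),
\end{equation}
where $(a_{ij})$ is the inverse of the Vandermonde matrix $(\lambda_j^i)$. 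So define $\mu_i\colon\Lip\to\R$ directly by \eqref{vandermonde extraction}. Since $u\mapsto\lambda_j u$ is $\tau$-continuous and commutes with $\vee$ and $\wedge$, and $\mu$ is a continuous valuation, each $\mu_i$ is automatically a continuous valuation on $\Lip$; dot product invariance of $\mu_i$ follows from that of $\mu$ together with the observation that $\lambda_j(u+l)=\lambda_j u+\lambda_j l$ and $\lambda_j l$ is again (the restriction of) a linear function. It remains to check two things: that $\mu_i$ so defined agrees on $\mathcal H(\sfe)$ with the $\widehat\mu_i$ obtained from McMullen (immediate from the finite-difference formula applied to the convex-body decomposition), and that \eqref{McMullen decomposition formula on Lip} actually holds, i.e. $\sum_i\lambda^i\mu_i(u)=\mu(\lambda u)$ for \emph{every} $\lambda>0$, not just $\lambda\in\{\lambda_0,\dots,\lambda_n\}$.

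The latter is the crux, and I expect it to be the main obstacle. On $\mathcal H(\sfe)$ it is true by McMullen. To propagate it to $\Lip$, fix $u\in\Lip$ and $\lambda>0$ and consider the two continuous valuations on $\Lip$ given by $v\mapsto\mu(\lambda v)$ and $v\mapsto\sum_{i=0}^n\lambda^i\mu_i(v)$. Both agree on $\mathcal H(\sfe)$: indeed for $h=h_K$ we have $\mu(\lambda h_K)=\mu(h_{\lambda K})=\nu(\lambda K)=\sum_i\lambda^i\nu_i(K)=\sum_i\lambda^i\widehat\mu_i(h_K)=\sum_i\lambda^i\mu_i(h_K)$, using $1$-homogeneity of support functions. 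By Proposition \ref{valuations are determined by their values on support functions}, two continuous valuations on $\Lip$ that coincide on $\mathcal H(\sfe)$ coincide everywhere, so $\mu(\lambda u)=\sum_i\lambda^i\mu_i(u)$ for all $u\in\Lip$. This also retroactively justifies that the $\mu_i$ of \eqref{vandermonde extraction} are independent of the choice of nodes $\lambda_j$. A small point to verify carefully is that $v\mapsto\mu(\lambda v)$ is genuinely a valuation: this holds because scaling by $\lambda>0$ commutes with pointwise max and min, so $(\lambda v)\vee(\lambda w)=\lambda(v\vee w)$ and likewise for $\wedge$. With these pieces assembled, Theorem \ref{McMullen on Lip} follows.
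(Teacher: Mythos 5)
Your proposal is correct and follows essentially the same route as the paper: restrict to $\mathcal H(\sfe)$, apply McMullen via Lemma \ref{moving valuations}, extend the homogeneous components to $\Lip$ by inverting a Vandermonde system, and then invoke Proposition \ref{valuations are determined by their values on support functions} to upgrade identities from $\mathcal H(\sfe)$ to all of $\Lip$ (the paper does this for $\lambda=1$ and for the homogeneity of each $\mu_i$ separately, you do it for each fixed $\lambda$ at once — a cosmetic difference). The only item you leave implicit is the $i$-homogeneity of the $\mu_i$ themselves, but this follows at once from your identity $\mu(\lambda\rho u)=\sum_i\lambda^i\mu_i(\rho u)=\sum_i(\lambda\rho)^i\mu_i(u)$ holding for all $\lambda>0$ by comparing polynomial coefficients.
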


\begin{proof}
Let $ \mu:\Lip\rightarrow\mathbb{R} $ be as in the hypothesis. Consider the map $ \nu:
\mathcal{K}^n\rightarrow\mathbb{R} $ defined by $ \nu(K)=\mu(h_K) $, for $ K\in\mathcal{K}^n $;
because of Lemma \ref{moving valuations}, this is a valuation on ${\mathcal K}^n$ which is translation invariant and
continuous with respect to the Hausdorff metric. From Theorem \ref{McMullen} we obtain continuous
and translation invariant valuations $ \nu_0,\ldots,\nu_n:\mathcal{K}^n\rightarrow\mathbb{R} $ such
that each $ \nu_i $ is $ i-$homogeneous and \eqref{McMullen decomposition formula} holds.

Define now $ \mu_i:\mathcal{H}(\sfe)\rightarrow\mathbb{R} $, $ i=0,\ldots,n $, by setting
$ \mu_i(h_K)=\nu_i(K) $, for $ h_K\in\mathcal{H}(\sfe) $. Reading McMullen's formula
\eqref{McMullen decomposition formula} in the support functions' setting, we have that for every
$ h_K\in\mathcal{H}(\sfe) $ and $ \lambda>0 $
\begin{equation}\label{decomposition formula on H}
\mu(\lambda h_K)=\mu(h_{\lambda K})=\nu(\lambda K)=\sum_{i=0}^n\lambda^i\nu_i(K)=
\sum_{i=0}^n\lambda^i\mu_i(h_K).
\end{equation}
This is the desired decomposition formula stated for support functions; we would now like to extend
it to all Lipschitz functions $ u\in\Lip $. To do that, we must first extend each $ \mu_i $ to $ \Lip $.

In what follows, for simplicity we write $h$ to denote a generic element of ${\mathcal H}(\sfe)$. We write
\eqref{decomposition formula on H} for an arbitrary $h$ and for $\lambda=k=1,\dots,n+1$:
\begin{equation}\label{McMullen system}
\mu(kh)=\sum_{i=0}^nk^i\mu_i(h).
\end{equation}
We see it as a system of $ n+1 $ equations in the $ n+1 $ unknowns $ \mu_0(h),
\mu_1(h),\ldots,\mu_n(h) $. The matrix associated with this system is
$$ 
M=\left(\begin{matrix} 1         & 1         & 1           & \cdots & 1\\
                                         1         & 2         & 2^2       & \cdots & 2^n\\
                                         \vdots & \vdots & \vdots   &            & \vdots\\
                                         1         & n         & n^2       & \cdots & n^n\\
                                         1         & n+1    & (n+1)^2 & \cdots & (n+1)^n
\end{matrix}\right), 
$$
which is a Vandermonde matrix, hence
$$ 
\det M=\prod_{1\leq i<j\leq n+1}(j-i)\neq 0, 
$$
and then $ M $ is nonsingular. Therefore, the system \eqref{McMullen system} is invertible and we can
find coefficients $ a_{ij} $, $ i=0,\ldots,n $, $ j=1,\ldots,n+1 $, such that
$$ \mu_i(h)=\sum_{j=1}^{n+1}a_{ij}\mu(jh); $$
note that the coefficients are independent of $h$. 
This allows us to extend the $ \mu_i$'s to $ \Lip $: for $ i=0,\ldots,n $ we set
\begin{equation}\label{second extension of mu_i}
\mu_i(u):=\sum_{j=1}^{n+1}a_{ij}\mu(ju),
\end{equation}
for every $ u\in\Lip $. We observe that for every $j\in\{1,\dots, n+1\}$, the function defined on $\Lip$ by $u\mapsto\mu(ju)$ inherits all the properties of $\mu$, i.e., it is a continuous and dot product invariant valuation on $\Lip$ as well. 

Let 
$$
\bar\mu=\sum_{i=0}^{n}\mu_i.
$$
By \eqref{decomposition formula on H}, $\mu$ and $\bar\mu$ coincide on 
${\mathcal H}(\sfe)$; hence, by Proposition \ref{valuations are determined by their values on support functions}, 
$\mu=\bar\mu$ on $\Lip$. 

It remains to be shown that, for every $ i\in\{0,\ldots,n\} $, $\mu_i$ is $i$-homogeneous on $\Lip$.
Fix $ i\in\{0,\ldots,n\} $. Let $\lambda>0$, and define $\mu',\,\mu''\colon\Lip\to\R$ by
$$
\mu'(u)=\mu_i(\lambda u),\quad \mu''(u)=\lambda^i\mu_i(u),\quad\forall u\in\Lip.
$$
These are continuous valuations and as they coincide on ${\mathcal H}(\sfe)$, they coincide on $\Lip$. This proves 
that $\mu_i$ is $i$-homogeneous. 
\end{proof}

\section{Characterisation of dot product and rotation invariant valuations}\label{The characterization result}

\subsection{Homogeneity and valuations on $ \Lip $}
The proof of Theorem \ref{characterization result} requires this last result.
\begin{prop}\label{homogeneous valuations on Lip}
Let $ n\geq 3 $ and $ 3\leq k\leq n $. Let $ \mu:\Lip\rightarrow\mathbb{R} $ be a continuous,
rotation invariant, dot product invariant and $ k-$homogeneous valuation. Then $ \mu\equiv 0 $ on
$ \Lip $.
\end{prop}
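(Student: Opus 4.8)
The plan is to combine the homogeneous decomposition on convex bodies with Hadwiger's theorem and then transport the vanishing back to $\Lip$ via the approximation result. Starting from $\mu$ as in the hypothesis, consider the valuation $\nu\colon{\mathcal K}^n\to\R$ given by $\nu(K)=\mu(h_K)$. By Lemma \ref{moving valuations}, $\nu$ is a continuous, rotation invariant, translation invariant and $k$-homogeneous valuation on ${\mathcal K}^n$. By Hadwiger's theorem (Theorem \ref{Hadwiger}), $\nu$ is a linear combination of intrinsic volumes, and since it is $k$-homogeneous with $k\geq 3$, we must have $\nu=c\,V_k$ for some constant $c\in\R$. Thus $\mu(h_K)=c\,V_k(K)$ for every $K\in{\mathcal K}^n$, and by Proposition \ref{valuations are determined by their values on support functions} the valuation $\mu$ is determined on all of $\Lip$ by this single constant $c$; it remains to show $c=0$.

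To prove $c=0$, the idea is to exploit the fact that $\mu$, and hence $u\mapsto c\,V_k(\text{``}u\text{''})$ for $u\in{\mathcal H}(\sfe)$, must extend to a continuous valuation on the much larger space $\Lip$, and that this forces a contradiction unless $c=0$ when $k\geq 3$. Concretely, I would choose a convex body $K$ whose support function $h_K$ can be $\tau$-approximated by support functions $h_{K_i}$ while simultaneously $V_k(K_i)$ fails to converge to $V_k(K)$ in the required way — or, more cleanly, construct two sequences $\{h_{K_i}\}$ and $\{h_{L_i}\}$ of support functions which $\tau$-converge to the same limit $u\in\Lip$ (not necessarily a support function) but with $V_k(K_i)$ and $V_k(L_i)$ tending to different values. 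Since $\tau$-convergence only controls the sup-norm and the a.e.\ limit and a uniform bound on the spherical gradients, while $V_k$ for $k\geq 2$ depends on higher-order (curvature) information of $\partial K$, there is enough room: one can perturb a convex body by adding small, highly oscillating convex bumps that change $V_k$ by a fixed amount for $k\geq 2$ while changing $h_K$ and $\nabla h_K$ arbitrarily little in the $\tau$-sense. Continuity of $\mu$ on $\Lip$ then yields $c\,\lim V_k(K_i)=\mu(u)=c\,\lim V_k(L_i)$, forcing $c=0$.

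Equivalently — and this is probably the route the paper takes, given the remark after Theorem \ref{characterization result} — one shows directly that $V_k$ for $k\geq 3$ admits no $\tau$-continuous extension from ${\mathcal H}(\sfe)$ to $\Lip$: pick a fixed $u_0\in C^1(\sfe)$ that is $\tau$-approximated (via Lemma \ref{approximation of C^1 functions} composed with Lemma \ref{piecewise linear functions are minima of support functions}) by piecewise linear functions, hence by finite minima of support functions; applying $\mu$, the valuation property reduces $\mu$ on $\widehat{\mathcal H}(\sfe)$ to an inclusion–exclusion expression in the $V_k$'s of the bodies involved, and one checks that this expression, while $\tau$-Cauchy by continuity of $\mu$, would only be consistent with the homogeneity and the geometry if the coefficient $c$ is zero. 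Here the $k\geq 3$ hypothesis enters because $V_0,V_1,V_2$ restricted to ${\mathcal H}(\sfe)$ do extend continuously (they are exactly the terms appearing in \eqref{representation formula}, and the divergence-theorem identity quoted in the introduction exhibits $V_2$ as a $\tau$-continuous integral functional), whereas $V_k$ for $k\geq 3$ genuinely involves mixed discriminants of the Hessian of $h_K$ of order $\geq 3$, which are not controlled by $\|\nabla h_{K_i}\|_\infty$ alone.

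The main obstacle is the construction in the previous paragraph: exhibiting concrete sequences of convex bodies, or of finite minima of support functions, that are $\tau$-convergent but on which $V_k$ misbehaves, and making the estimates rigorous — in particular verifying the uniform gradient bound (condition 3 in the definition of $\tau$) for the perturbations while keeping $V_k$ bounded away from its ``naive'' limit. One must be careful that the perturbations stay inside ${\mathcal H}(\sfe)$ (genuine support functions) or at least inside $\widehat{\mathcal H}(\sfe)$, so that $\mu$ can be evaluated on them via the valuation identity; the oscillating-bump construction has to be compatible with convexity, which is the delicate point. Once a single such bad pair is in hand, continuity of $\mu$ on $\Lip$ together with $\mu|_{{\mathcal H}(\sfe)}=c\,V_k$ immediately gives $c=0$, and then Proposition \ref{valuations are determined by their values on support functions} upgrades this to $\mu\equiv 0$ on all of $\Lip$.
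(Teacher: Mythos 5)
Your opening reduction is exactly the paper's: transport $\mu$ to ${\mathcal K}^n$ via Lemma \ref{moving valuations}, apply Hadwiger to get $\mu(h_K)=c\,V_k(K)$, and note that $c=0$ finishes the proof through Proposition \ref{valuations are determined by their values on support functions}. The gap is that you never actually prove $c=0$; you only describe two candidate strategies, and the first one cannot work. If $\{h_{K_i}\}\subseteq{\mathcal H}(\sfe)$ is $\tau$-convergent, it is in particular uniformly convergent, so the bodies $K_i$ are Cauchy in the Hausdorff metric and converge to a convex body $K$ with $u=h_K$; since $V_k$ is continuous with respect to the Hausdorff metric, $V_k(K_i)\to V_k(K)$ always. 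Hence no two sequences of genuine support functions with a common $\tau$-limit can have different $V_k$-limits, and no ``oscillating convex bump'' perturbation can change $V_k$ by a fixed amount while moving $h_K$ arbitrarily little in $\tau$ (your own parenthetical ``for $k\ge2$'' already signals trouble, since $V_2$ does extend continuously). This is the content of Lemma \ref{continuity of a valuation on H}: on ${\mathcal H}(\sfe)$ the topology $\tau$ is just the uniform topology, so the obstruction must be sought outside ${\mathcal H}(\sfe)$.

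Your second route --- evaluating $\mu$ on finite minima of support functions via inclusion--exclusion --- is the one the paper takes, but the phrase ``one checks that this expression \dots would only be consistent \dots if $c$ is zero'' is where the entire proof lives, and it is not a soft consistency check: it is a quantitative construction. The paper takes $w_{\lambda,\xi}=\bigl(\lambda h_{D_\xi}-\langle\cdot,\xi\rangle\bigr)\wedge\mathbb{O}$, where $D_\xi$ is a $(k-1)$-dimensional unit disk orthogonal to $\xi\in S^{k-1}$; the valuation property plus $\mu=V_k$ on ${\mathcal H}(\sfe)$ gives $\mu(w_{\lambda,\xi})=-\frac{\omega_{k-1}}{k}\lambda^{k-1}$ (the $V_k$ of a cone of height $1$ over $\lambda D_\xi$). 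One then packs $N_\nu=\nu^{k-1}$ points on $S^{k-1}$ at mutual distance $\gtrsim 1/\nu$, takes $\lambda_\nu\sim\nu$ so the corresponding $w$'s have disjoint supports (so inclusion--exclusion degenerates to additivity), and sets $\psi_\nu=\nu^{-p}\bigwedge_i w_{\lambda_\nu,x_i}$. Homogeneity gives $\mu(\psi_\nu)\sim-\nu^{2k-2-kp}$, while $\|\psi_\nu\|_\infty$ and $L(\psi_\nu)$ are $O(\nu^{1-p})$, so $\psi_\nu\xrightarrow{\tau}\mathbb{O}$ provided $p>1$. A contradiction with continuity requires $2k-2-kp>0$, i.e. $p<2-\frac2k$, and the window $1<p<2-\frac2k$ is nonempty precisely when $k\ge3$. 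This scaling argument is where the hypothesis $k\ge3$ enters; your heuristic about mixed discriminants of order $\ge3$ does not substitute for it. As written, the proposal is therefore incomplete at its essential step.
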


\begin{rmk}
This proposition shows that a significant number of valuations, namely the intrinsic volumes with homogeneity degree larger or equal to three, cannot be extended from the space of support functions to the wider set of Lipschitz functions. In particular the volume functional cannot be extended to $\Lip$, in dimension three and higher.  
\end{rmk}

To ease the reading, we have stated some of the steps of the proof of Proposition
\ref{homogeneous valuations on Lip} as lemmas. Their proofs are provided along the way.
\begin{proof}
Let $ n $, $ k $ and $ \mu $ be as in the hypothesis. Define $ \nu:\mathcal{K}^n\rightarrow
\mathbb{R} $ by setting
$$ \nu(K)=\mu(h_K), $$
for $ K\in\mathcal{K}^n $. The functional $ \nu $ is a $ k-$homogeneous, translation and rotation
invariant valuation which is continuous with respect to the Hausdorff metric, thanks to Lemma
\ref{moving valuations}. From Theorem \ref{Hadwiger}, we have that there exists a constant $ c\in
\mathbb{R} $ such that
$$ \mu(h_K)=\nu(K)=cV_k(K), $$
for every $ K\in\mathcal{K}^n $, where $ V_k $ is the $ k^{\textnormal{th}} $ intrinsic volume.

If $ c=0 $, then $ \mu=0 $ on $ \mathcal{H}(\sfe) $, and from Proposition
\ref{valuations are determined by their values on support functions} we have the assertion.

Suppose now $ c\neq 0 $. We will show that this leads to a contradiction. Since the functional
$ \frac{1}{c}\mu $ retains all of $ \mu$'s properties, up to dividing by $ c $ we can assume that
\begin{equation}\label{mu on support functions}
\mu(h_K)=V_k(K),
\end{equation}
for every $ K\in\mathcal{K}^n $.

For $ x\in\mathbb{R}^n $ we write $ x=(\xi,\eta) $, with $ \xi\in\mathbb{R}^k $ and $ \eta\in
\mathbb{R}^{n-k} $. Fix $ \overline{\xi}\in S^{k-1} $ and define $ u_{\overline{\xi}}:\mathbb{R}^n
\rightarrow\mathbb{R} $ by setting
$$ 
u_{\overline{\xi}}(x)=u_{\overline{\xi}}(\xi,\eta)=\|\xi-\langle\xi,\overline{\xi}\rangle
\overline{\xi}\|, 
$$
for $ x\in\mathbb{R}^n $. Consider the $ (k-1)-$dimensional disk in $ \mathbb{R}^n $ defined by
$$ 
D_{\overline{\xi}}=\left\{(\xi,0)\in\mathbb{R}^k\times\mathbb{R}^{n-k}:\langle\xi,\overline{\xi}
\rangle=0,\|\xi\|\leq 1\right\}. 
$$
The map $ u_{\overline{\xi}} $ is the support function of $ D_{\overline{\xi}} $. In fact, up to a change
of coordinate system, we can assume $ \overline{\xi}=(1,0,\ldots,0) $; from the definition of support
function, for every $ (\xi,\eta)\in\mathbb{R}^n $ we have
$$ 
h_{D_{\overline{\xi}}}(\xi,\eta)=\max_{(\xi',0)\in D_{\overline{\xi}}}\langle\xi,\xi'\rangle=
\max_{(\xi',0)\in D_{\overline{\xi}}}\langle(\xi_2,\ldots,\xi_k),(\xi_2',\ldots,\xi_k')\rangle=\|(\xi_2,
\ldots,\xi_k)\|=u_{\overline{\xi}}(\xi,\eta). 
$$

Define now $ v_{\overline{\xi}}:\mathbb{R}^n\rightarrow\mathbb{R} $ by setting
$$ 
v_{\overline{\xi}}(x)=v_{\overline{\xi}}(\xi,\eta)=\langle\xi,\overline{\xi}\rangle, 
$$
for $ x\in\mathbb{R}^n $; $ v_{\overline{\xi}} $ is the support function of the convex body (in fact, a singleton)
$ \{(\overline{\xi},0)\} $.

For $ \lambda\geq 1 $, consider $ w_{\lambda,\,\overline{\xi}}:\sfe\rightarrow\mathbb{R} $
defined by
$$ w_{\lambda,\,\overline{\xi}}=(\lambda u_{\overline{\xi}}-v_{\overline{\xi}})\wedge\mathbb{O}, $$
where $ \mathbb{O} $ denotes the function which is identically zero on $ \sfe $. Note that
$ w_{\lambda,\,\overline{\xi}}=h_{\lambda D_{\overline{\xi}}-(\overline{\xi},0)}\wedge\mathbb{O}
\in\Lip $, being a minimum of Lipschitz functions. Therefore, $ \mu $ can be evaluated at
$ w_{\lambda,\,\overline{\xi}} $, and we do that in the
following lemma.
\begin{lem}\label{values of mu}
We have
$$ \mu(w_{\lambda,\,\overline{\xi}})=-\frac{\omega_{k-1}}{k}\lambda^{k-1}, $$
where $ \omega_{k-1} $ denotes the measure of the unit ball of $ \mathbb{R}^{k-1} $.
\end{lem}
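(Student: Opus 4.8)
The plan is to express $w_{\lambda,\overline{\xi}}$ as the pointwise minimum of two support functions and then run the valuation identity, reducing the whole computation to intrinsic volumes of convex bodies. Recall that, by the behaviour of support functions under dilations and translations, $\lambda u_{\overline{\xi}}-v_{\overline{\xi}}=h_{K_1}$ with $K_1:=\lambda D_{\overline{\xi}}-(\overline{\xi},0)$ (as already noted in the text), and that $\mathbb{O}=h_{K_2}$ with $K_2:=\{0\}$. Thus $w_{\lambda,\overline{\xi}}=h_{K_1}\wedge h_{K_2}\in\Lip$, and applying \eqref{definition of valuation} to the pair $h_{K_1},h_{K_2}$ gives
$$
\mu(w_{\lambda,\overline{\xi}})=\mu(h_{K_1})+\mu(h_{K_2})-\mu(h_{K_1}\vee h_{K_2}).
$$
By Lemma \ref{maximum and minimum of support functions}, $h_{K_1}\vee h_{K_2}=h_{\conv(K_1\cup K_2)}$, so all three terms on the right are $\mu$ evaluated at support functions, where we may use $\mu(h_K)=V_k(K)$ (formula \eqref{mu on support functions}).

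It then remains to identify the three intrinsic volumes. Since $K_1$ is a $(k-1)$-dimensional disk and $K_2=\{0\}$, and since $k\geq 3$, we have $V_k(K_1)=V_k(K_2)=0$. The set $\conv(K_1\cup K_2)=\conv(K_1\cup\{0\})$ is a $k$-dimensional cone: its apex is the origin and its base is the $(k-1)$-disk $K_1$ of radius $\lambda$, which lies in an affine $(k-1)$-plane at distance $\|(\overline{\xi},0)\|=1$ from the origin, so the cone has height $1$. Since $V_k$ of a $k$-dimensional convex body equals its $k$-dimensional Lebesgue measure, the elementary cone-volume formula gives $V_k(\conv(K_1\cup K_2))=\frac1k\cdot\omega_{k-1}\lambda^{k-1}\cdot 1$. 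Substituting into the displayed identity yields $\mu(w_{\lambda,\overline{\xi}})=-\frac{\omega_{k-1}}{k}\lambda^{k-1}$.

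There is no real obstacle in this lemma; the only points to handle with some care are the two support-function identities $\mathbb{O}=h_{\{0\}}$ and $\lambda u_{\overline{\xi}}-v_{\overline{\xi}}=h_{\lambda D_{\overline{\xi}}-(\overline{\xi},0)}$, the fact that $V_k$ vanishes on lower-dimensional bodies, and the verification that the affine hull of $K_1$ is at distance exactly $1$ from the apex, so that the cone indeed has height $1$.
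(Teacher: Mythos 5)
Your proposal is correct and follows essentially the same route as the paper: decompose via the valuation identity applied to $h_{K_1}\wedge\mathbb{O}$, observe that $V_k$ vanishes on the $(k-1)$-dimensional disk (and at $\mathbb{O}$, which the paper handles via homogeneity rather than via $\mathbb{O}=h_{\{0\}}$, but both are fine), and compute $V_k$ of the resulting cone of height $1$ by the cone-volume formula. No gaps.
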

\begin{proof}
From the valuation property we get
\begin{eqnarray}\label{mu on w}
\mu(w_{\lambda,\,\overline{\xi}})&=&\mu((\lambda u_{\overline{\xi}}-v_{\overline{\xi}})\wedge
\mathbb{O})=\mu(\lambda u_{\overline{\xi}}-v_{\overline{\xi}})+\mu(\mathbb{O})-\mu((\lambda
u_{\overline{\xi}}-v_{\overline{\xi}})\vee\mathbb{O})\nonumber\\
&=&\mu(\lambda u_{\overline{\xi}}-v_{\overline{\xi}})-\mu((\lambda u_{\overline{\xi}}-
v_{\overline{\xi}})\vee\mathbb{O}),
\end{eqnarray}
since $ \mu(\mathbb{O})=0 $, because of the homogeneity.

As we have already pointed out, $ \lambda u_{\overline{\xi}}-v_{\overline{\xi}}=
h_{\lambda D_{\overline{\xi}}-(\overline{\xi},0)} $, and remembering
\eqref{mu on support functions} we obtain
\begin{equation}\label{mu on lambda u-v}
\mu(\lambda u_{\overline{\xi}}-v_{\overline{\xi}})=V_k(\lambda D_{\overline{\xi}}-
(\overline{\xi},0))=V_k(\lambda D_{\overline{\xi}})=\lambda^kV_k(D_{\overline{\xi}})=0,
\end{equation}
where the last equality follows from the fact that $ D_{\overline{\xi}} $ has dimension $ k-1 $.

Now, $ (\lambda u_{\overline{\xi}}-v_{\overline{\xi}})\vee\mathbb{O} $ is the support function
of $ \conv\left((\lambda D_{\overline{\xi}}-(\overline{\xi},0))\cup\{0\}\right) $ (see Lemma
\ref{maximum and minimum of support functions}), which is a cone with vertex at the origin, base
$ \lambda D_{\overline{\xi}}-(\overline{\xi},0) $ and height $ 1 $, since $ \|\overline{\xi}\|=1 $. From
\eqref{mu on w}, \eqref{mu on lambda u-v} and \eqref{mu on support functions} we get
$$ \mu(w_{\lambda,\,\overline{\xi}})=-\mu((\lambda u_{\overline{\xi}}-v_{\overline{\xi}})\vee
\mathbb{O})=-V_k\left(\conv\left((\lambda D_{\overline{\xi}}-(\overline{\xi},0))\cup\{0\}\right)
\right)=-\frac{\omega_{k-1}}{k}\lambda^{k-1}. $$
\end{proof}
The next lemma concerns the support set $ \supp(w_{\lambda,\,\overline{\xi}}) $ of the function
$ w_{\lambda,\,\overline{\xi}} $.
\begin{lem}
For every $ (\xi,0)\in\supp(w_{\lambda,\,\overline{\xi}}) $ we have
$$ \|\xi-\overline{\xi}\|<\frac{\sqrt{2}}{\lambda}. $$
\end{lem}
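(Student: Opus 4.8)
The plan is to translate membership in the support into a scalar inequality and then reduce everything to an elementary estimate in the single variable $t=\langle\xi,\overline{\xi}\rangle$. First, for a point $(\xi,0)\in\sfe$ one has $\|\xi\|=1$, and since $\xi-t\overline{\xi}$ is orthogonal to the unit vector $\overline{\xi}$ the Pythagorean theorem gives $u_{\overline{\xi}}(\xi,0)=\|\xi-t\overline{\xi}\|=\sqrt{1-t^2}$, while $v_{\overline{\xi}}(\xi,0)=t$; similarly $\|\xi-\overline{\xi}\|^2=\|\xi\|^2-2t+\|\overline{\xi}\|^2=2(1-t)$. Hence the desired bound $\|\xi-\overline{\xi}\|<\sqrt{2}/\lambda$ is equivalent to $t>1-1/\lambda^2$, and this is what I would prove.

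Next I would use the hypothesis $(\xi,0)\in\supp(w_{\lambda,\overline{\xi}})$. Since $w_{\lambda,\overline{\xi}}=(\lambda u_{\overline{\xi}}-v_{\overline{\xi}})\wedge\mathbb{O}\le0$ on $\sfe$, the set $\{x\in\sfe:w_{\lambda,\overline{\xi}}(x)\ne0\}$ is precisely the relatively open set $\{x\in\sfe:\lambda u_{\overline{\xi}}(x)<v_{\overline{\xi}}(x)\}$, whose closure is contained in the closed set $\{x\in\sfe:\lambda u_{\overline{\xi}}(x)\le v_{\overline{\xi}}(x)\}$. Therefore $\lambda\sqrt{1-t^2}\le t$. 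In particular $t>0$: if $t\le0$, then since $\lambda\sqrt{1-t^2}\ge0\ge t$ both inequalities would be equalities, giving $t=0$ and $1-t^2=0$, which is absurd. Squaring $\lambda\sqrt{1-t^2}\le t$ then yields $\lambda^2(1-t^2)\le t^2$, i.e.\ $t\ge\lambda/\sqrt{1+\lambda^2}$.

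Finally I would check the numerical inequality $\lambda/\sqrt{1+\lambda^2}>1-1/\lambda^2$ for every $\lambda\ge1$; combined with the previous step this gives $t>1-1/\lambda^2$, which is the claim. Writing $1-1/\lambda^2=(\lambda^2-1)/\lambda^2$ and clearing denominators, the inequality becomes $\lambda^3>(\lambda^2-1)\sqrt{1+\lambda^2}$; both sides are nonnegative when $\lambda\ge1$, so squaring and expanding reduces it to $\lambda^4+\lambda^2-1>0$, which holds since $\lambda\ge1$ forces $\lambda^4+\lambda^2-1\ge1$. I do not expect a real obstacle here: the only steps that need a moment's care are the orthogonality identities (which is exactly why the disk $D_{\overline{\xi}}$ was chosen orthogonal to $\overline{\xi}$) and the use of the closure in the definition of support, which forces the non-strict bound $\lambda\sqrt{1-t^2}\le t$ and hence requires the strict numerical inequality $\lambda^4+\lambda^2-1>0$ rather than merely its non-strict version.
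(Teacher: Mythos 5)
Your proof is correct and follows essentially the same route as the paper: both extract the inequality $\lambda\sqrt{1-t^2}\le t$ (with $t=\langle\xi,\overline{\xi}\rangle=\xi_1$) from the support condition and then conclude by elementary algebra, since $\|\xi-\overline{\xi}\|^2=2(1-t)$ is just the paper's decomposition $|\xi_1-1|^2+\|\xi'\|^2$ in disguise. Your detour through the polynomial inequality $\lambda^4+\lambda^2-1>0$ is a slightly longer, but valid, substitute for the paper's direct estimate $1-\xi_1\le\|\xi'\|^2<1/\lambda^2$.
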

\begin{proof}
Like before, we assume $ \overline{\xi}=(1,0,\ldots,0) $. Thus, for every $ (\xi,\eta)\in \sfe $,
$$ w_{\lambda,\,\overline{\xi}}(\xi,\eta)=\left(\lambda\sqrt{\xi_2^2+\ldots+\xi_k^2}-\xi_1\right)
\wedge 0. $$
If $ (\xi,0)\in\supp(w_{\lambda,\,\overline{\xi}}) $, we have $ \|\xi\|=1 $ and $ \lambda\sqrt{\xi_2^2
+\ldots+\xi_k^2}-\xi_1\leq 0 $, hence
\begin{equation}\label{inequality}
\sqrt{\xi_2^2+\ldots+\xi_k^2}\leq\frac{\xi_1}{\lambda}.
\end{equation}
In particular, this implies $ \xi_1\geq 0 $.

We write $ \xi=(\xi_1,\xi') $, with $ \xi'\in\mathbb{R}^{k-1} $. Since $ \|\xi\|=1 $ and $ \xi_1\geq
0 $, we have $ \xi_1=\sqrt{1-\|\xi'\|^2} $. Using this last equality in \eqref{inequality} we obtain
$$ \|\xi'\|\leq\frac{\sqrt{1-\|\xi'\|^2}}{\lambda}, $$
which in turn gives
$$ \|\xi'\|^2\leq\frac{1}{1+\lambda^2}<\frac{1}{\lambda^2}. $$
We can also estimate
$$ \vert\xi_1-1\vert=1-\xi_1=1-\sqrt{1-\|\xi'\|^2}=\frac{\|\xi'\|^2}{1+\sqrt{1-\|\xi'\|^2}}
\leq\|\xi'\|^2<\frac{1}{\lambda^2}. $$

From these inequalities we get
$$ \|\xi-\overline{\xi}\|^2=\|\xi-(1,0,\ldots,0)\|^2=\vert\xi_1-1\vert^2+\|\xi'\|^2<
\frac{1}{\lambda^4}+\frac{1}{\lambda^2}\leq\frac{2}{\lambda^2}, $$
since $ \lambda\geq 1 $. The assertion follows.
\end{proof}
This result yields the following one.
\begin{lem}
For every $ \xi_1,\xi_2\in S^{k-1} $ such that $ \|\xi_1-\xi_2\|\geq\frac{4}{\lambda} $ we have
$$ w_{\lambda,\,\xi_1}\cdot w_{\lambda,\,\xi_2}=\mathbb{O}. $$
\end{lem}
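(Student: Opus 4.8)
The plan is to show that $w_{\lambda,\xi_1}$ and $w_{\lambda,\xi_2}$ are never both strictly negative at the same point of $\sfe$; since each of these two functions is everywhere $\le\mathbb{O}$, it will then follow that their pointwise product is identically $\mathbb{O}$, which is exactly the claim.

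The first step I would carry out is a homogeneity reduction that transfers the question from an arbitrary point $(\xi,\eta)\in\sfe$ to a point of the slice $\{\eta=0\}$, where the preceding lemma applies. The key remark is that $u_{\overline{\xi}}$ and $v_{\overline{\xi}}$ are $1$-homogeneous and depend on $x=(\xi,\eta)$ only through $\xi$; hence, for $\xi\neq0$, writing $\omega=\xi/\|\xi\|\in S^{k-1}$, we have $u_{\overline{\xi}}(\xi,\eta)=\|\xi\|\,u_{\overline{\xi}}(\omega,0)$ and $v_{\overline{\xi}}(\xi,\eta)=\|\xi\|\,v_{\overline{\xi}}(\omega,0)$, and therefore, since $\|\xi\|\ge0$,
$$
w_{\lambda,\overline{\xi}}(\xi,\eta)=\|\xi\|\,w_{\lambda,\overline{\xi}}(\omega,0),
$$
while $w_{\lambda,\overline{\xi}}(0,\eta)=0$. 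Consequently $w_{\lambda,\overline{\xi}}(\xi,\eta)<0$ forces $\xi\neq0$ and $w_{\lambda,\overline{\xi}}(\omega,0)<0$; since $w_{\lambda,\overline{\xi}}\le0$ everywhere, this means $(\omega,0)\in\supp(w_{\lambda,\overline{\xi}})$, so the previous lemma gives $\|\omega-\overline{\xi}\|<\sqrt{2}/\lambda$.

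I would then conclude by contradiction. Assume there exists $p=(\xi,\eta)\in\sfe$ with $w_{\lambda,\xi_1}(p)<0$ and $w_{\lambda,\xi_2}(p)<0$. By the previous step $\xi\neq0$, and setting $\omega=\xi/\|\xi\|$ we obtain $\|\omega-\xi_1\|<\sqrt{2}/\lambda$ and $\|\omega-\xi_2\|<\sqrt{2}/\lambda$; the triangle inequality then yields $\|\xi_1-\xi_2\|<2\sqrt{2}/\lambda<4/\lambda$, contradicting the hypothesis $\|\xi_1-\xi_2\|\ge4/\lambda$. Hence at every $p\in\sfe$ at least one of $w_{\lambda,\xi_1}(p),w_{\lambda,\xi_2}(p)$ equals $0$, and since both functions are nonpositive on $\sfe$ their product vanishes everywhere, that is, $w_{\lambda,\xi_1}\cdot w_{\lambda,\xi_2}=\mathbb{O}$.

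The only part that needs a little care is the homogeneity reduction of the second paragraph, including the (harmless) degenerate case $\xi=0$, where $w_{\lambda,\overline{\xi}}$ vanishes automatically; apart from that, the argument is just the triangle inequality combined with the support estimate already established, so I do not anticipate a genuine obstacle.
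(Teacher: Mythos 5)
Your proof is correct and follows essentially the same route as the paper's: reduce to the slice $\eta=0$ via the $1$-homogeneity of $w_{\lambda,\overline{\xi}}$ in $\xi$ (and its independence of $\eta$), invoke the preceding support lemma to get $\|\omega-\xi_i\|<\sqrt{2}/\lambda$ for $i=1,2$, and conclude by the triangle inequality. No issues.
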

\begin{proof}
Take $ \xi_1,\xi_2 $ as in the hypothesis. Suppose the result to be false. Then there is a point
$ (\widetilde{\xi},\widetilde{\eta})\in \sfe $ such that
$$ w_{\lambda,\,\xi_1}(\widetilde{\xi},\widetilde{\eta})\cdot w_{\lambda,\,\xi_2}(\widetilde{\xi},
\widetilde{\eta})\neq 0. $$
Note that $ w_{\lambda,\,\xi_1}(0,\widetilde{\eta})=w_{\lambda,\,\xi_2}(0,\widetilde{\eta})=0 $, hence
$ \widetilde{\xi}\neq 0 $.

For $ i=1,2 $, the function
$$ w_{\lambda,\,\xi_i}(\xi,\eta)=\left[\lambda\|\xi-\langle\xi,\xi_i\rangle\xi_i\|-\langle\xi,\xi_i
\rangle\right]\wedge 0  $$
is $ 1-$homogeneous with respect to $ \xi $, and since $  w_{\lambda,\,\xi_i}(\widetilde{\xi},
\widetilde{\eta})\neq 0 $, we also have $ w_{\lambda,\,\xi_i}(\widehat{\xi},\widetilde{\eta})\neq 0 $, 
where $ \widehat{\xi}=\frac{\widetilde{\xi}}{\|\widetilde{\xi}\|} $. This means that $ (\widehat{\xi},
\widetilde{\eta})\in\supp(w_{\lambda,\,\xi_i}) $, hence $ (\widehat{\xi},0)\in
\supp(w_{\lambda,\,\xi_i}) $ too (since $ w_{\lambda,\,\xi_i} $ does not depend on $ \eta $), and from
the previous lemma we have
$$ \|\widehat{\xi}-\xi_i\|<\frac{\sqrt{2}}{\lambda}, $$
for $ i=1,2 $. Therefore,
$$ \|\xi_1-\xi_2\|\leq\|\xi_1-\widehat{\xi}\|+\|\widehat{\xi}-\xi_2\|<\frac{2\sqrt{2}}{\lambda}, $$
which contradicts the hypothesis.
\end{proof}
Iterating, the previous result can be extended to any finite number of points.

\begin{cor}\label{distant points give orthogonal w's}
Let $ N\in\mathbb{N} $ and $ \xi_1,\ldots,\xi_N\in S^{k-1} $ be such that $ \|\xi_i-\xi_j\|\geq
\frac{4}{\lambda} $, for every $ i\neq j $. Then
$$ w_{\lambda,\,\xi_i}\cdot w_{\lambda,\,\xi_j}=\mathbb{O}, $$
for every $ i\neq j $.
\end{cor}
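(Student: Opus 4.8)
The plan is to observe that Corollary~\ref{distant points give orthogonal w's} is a direct consequence of the preceding lemma, which is already phrased for an arbitrary pair of unit vectors in $S^{k-1}$; the word ``iterating'' refers only to the trivial fact that a statement about pairs automatically yields a statement about every pair in a finite family. Concretely, I would fix two distinct indices $i,j\in\{1,\dots,N\}$. By hypothesis $\xi_i,\xi_j\in S^{k-1}$ and $\|\xi_i-\xi_j\|\ge\frac{4}{\lambda}$, so the previous lemma, applied with $\xi_1:=\xi_i$ and $\xi_2:=\xi_j$, gives immediately
$$
w_{\lambda,\,\xi_i}\cdot w_{\lambda,\,\xi_j}=\mathbb{O}.
$$
Since $i\ne j$ were arbitrary, this holds for every pair of distinct indices, which is exactly the assertion of the corollary. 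No genuine induction is needed; the only thing to check is that the separation hypothesis imposed on the whole family $\xi_1,\dots,\xi_N$ restricts, by definition, to the separation hypothesis required for each individual pair, and this is immediate.

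Accordingly, I do not expect any real obstacle here: the step is purely formal. I would, however, make explicit the purpose of recording it, since that is where the content lies downstream. The identity $w_{\lambda,\,\xi_i}\cdot w_{\lambda,\,\xi_j}=\mathbb{O}$ says precisely that the functions $w_{\lambda,\,\xi_1},\dots,w_{\lambda,\,\xi_N}$ have pairwise disjoint supports; this disjointness is what will later allow $\mu$ to be evaluated on suitable combinations of these functions (their pointwise minima) by repeated use of the valuation property, together with the exact value computed in Lemma~\ref{values of mu}, ultimately yielding the contradiction that proves Proposition~\ref{homogeneous valuations on Lip}. Thus the role of this step is bookkeeping for the forthcoming disjointness argument rather than a substantial difficulty in itself.
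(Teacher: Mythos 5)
Your proposal is correct and matches the paper exactly: the paper justifies the corollary with the single remark that the preceding pairwise lemma can be ``iterated,'' which is precisely your observation that the separation hypothesis on the family restricts to each pair, so the lemma applies to every pair of distinct indices. No further argument is needed.
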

We will need a couple more results. The first one concerns the behaviour of a general valuation on
non-positive orthogonal functions.
\begin{lem}\label{mu on a minimum of functions}
Let $ N\in\mathbb{N} $ and $ u_1,\ldots,u_N\in\Lip $. If $ u_i\leq 0 $ for every $ i=1,\ldots,N $ and
$ u_i\cdot u_j=\mathbb{O} $ for $ i\neq j $, then
$$ \mu\left(\bigwedge_{i=1}^Nu_i\right)=\sum_{i=1}^N\mu(u_i). $$
\end{lem}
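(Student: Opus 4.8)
The plan is to prove the identity by induction on $N$, using the valuation property of $\mu$ together with the hypothesis that the functions are pairwise disjointly supported and non-positive. The base case $N=1$ is trivial, and the case $N=2$ is essentially a direct consequence of the definition of a valuation once one identifies the pointwise maximum and minimum of $u_1$ and $u_2$.

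First I would handle $N=2$. Let $u_1,u_2\le0$ with $u_1\cdot u_2=\mathbb O$, meaning that at every point $x\in\sfe$ at least one of $u_1(x),u_2(x)$ vanishes. I claim that $u_1\vee u_2=\mathbb O$. Indeed, at any $x$, if $u_1(x)=0$ then $(u_1\vee u_2)(x)=\max\{0,u_2(x)\}=0$ since $u_2(x)\le0$, and symmetrically if $u_2(x)=0$; so the pointwise maximum is identically zero. The valuation property \eqref{definition of valuation} then gives
$$
\mu(u_1\wedge u_2)=\mu(u_1)+\mu(u_2)-\mu(u_1\vee u_2)=\mu(u_1)+\mu(u_2)-\mu(\mathbb O).
$$
Here one needs $\mu(\mathbb O)=0$; this holds because (as used earlier in the excerpt, e.g. in the proof of Lemma \ref{values of mu}) $\mu$ is assumed $k$-homogeneous with $k\ge3>0$, so $\mu(\mathbb O)=\mu(0\cdot\mathbb O)=0^k\mu(\mathbb O)=0$. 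Thus $\mu(u_1\wedge u_2)=\mu(u_1)+\mu(u_2)$, which is the case $N=2$.

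For the inductive step, suppose the claim holds for $N-1$ functions, and let $u_1,\dots,u_N$ satisfy the hypotheses. Set $v=\bigwedge_{i=1}^{N-1}u_i$. Then $v\le0$, and I claim $v\cdot u_N=\mathbb O$: at any $x$, if $u_N(x)\ne0$ then $u_i(x)=0$ for all $i\le N-1$, so $v(x)=\min_i u_i(x)\le0$; but also $v(x)=\min\{0,\dots,0,\text{(some non-positive values)}\}$ — more carefully, since each $u_i(x)=0$ for $i<N$, we get $v(x)=0$, so $v(x)u_N(x)=0$. Hence $v$ and $u_N$ are two non-positive disjointly-supported functions, and the $N=2$ case gives $\mu(v\wedge u_N)=\mu(v)+\mu(u_N)$. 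Since $\bigwedge_{i=1}^N u_i=v\wedge u_N$ and, by the induction hypothesis, $\mu(v)=\sum_{i=1}^{N-1}\mu(u_i)$, we conclude $\mu\bigl(\bigwedge_{i=1}^N u_i\bigr)=\sum_{i=1}^N\mu(u_i)$, completing the induction.

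I do not expect any serious obstacle here; the only points requiring a little care are the verification that the pointwise maximum of two non-positive disjointly-supported functions is identically zero, the fact that $\mu(\mathbb O)=0$ (which follows from homogeneity), and the check in the inductive step that $v=\bigwedge_{i<N}u_i$ remains disjointly supported from $u_N$ and still non-positive — all of which are immediate from the pointwise definitions. It is worth noting that the lemma as stated and used later only needs $\mu(\mathbb O)=0$, which in the ambient proof of Proposition \ref{homogeneous valuations on Lip} is guaranteed by $k$-homogeneity with $k\ge3$; for a general valuation one would instead simply carry the additive constant $-(N-1)\mu(\mathbb O)$ through the computation.
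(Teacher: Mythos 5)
Your proof is correct. The paper proves the same lemma by invoking the inclusion--exclusion principle for valuations on the lattice $(\Lip,\vee,\wedge)$ in one shot: it expands $\mu\bigl(\bigwedge_{j}u_j\bigr)$ as an alternating sum over all maxima $u_{j_1}\vee\dots\vee u_{j_m}$ and observes that every term with $m\ge2$ equals $\mathbb{O}$, because the $u_i$ are non-positive and pairwise disjointly supported. Your induction on $N$ reaches the same conclusion without citing inclusion--exclusion as a black box: the case $N=2$ is exactly the observation that $u_1\vee u_2=\mathbb{O}$, and the inductive step only needs the easy check that $\bigwedge_{i<N}u_i$ is again non-positive and disjointly supported from $u_N$. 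Since inclusion--exclusion is itself proved by such an induction, the two arguments are close in substance, but yours is the more self-contained and, usefully, more explicit about the role of $\mu(\mathbb{O})$: both proofs tacitly or explicitly need $\mu(\mathbb{O})=0$ (the paper's alternating sum otherwise leaves a residual $(1-N)\mu(\mathbb{O})$), and you correctly note that this is supplied by the $k$-homogeneity of the valuation $\mu$ fixed in the proof of Proposition \ref{homogeneous valuations on Lip}, and that for a general valuation the identity would acquire the extra term $-(N-1)\mu(\mathbb{O})$.
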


\begin{proof}
The set $ \left(\Lip,\vee,\wedge\right) $ is a lattice, and since every valuation on a lattice satisfies the
Inclusion-Exclusion Principle, we can write
$$ \mu\left(\bigwedge_{j=1}^Nu_j\right)=\sum_{1\leq j\leq N}\mu(u_j)-\sum_{1\leq j_1<j_2\leq N}
\mu(u_{j_1}\vee u_{j_2})+ $$
$$ +\sum_{1\leq j_1<j_2<j_3\leq N}\mu(u_{j_1}\vee u_{j_2}\vee u_{j_3})-\ldots+(-1)^{N-1}
\mu\left(\bigvee_{j=1}^Nu_j\right). $$
The hypotheses imply that $ u_{j_1}\vee\ldots\vee u_{j_m}=\mathbb{O} $, for every $ m\in\{1,
\ldots,N\} $ and $ \{j_1,\ldots,j_m\}\subseteq\{1,\ldots,N\} $. The conclusion immediately follows.
\end{proof}
The next well-known lemma allows us to find sufficiently many points on the unit sphere which are
not too close to each other.
\begin{lem}
Let $ N\in\mathbb{N} $, $ N\geq 2 $. For every $ \nu\in\mathbb{N} $ there are $ N_\nu=\nu^{N-1} $
points $ x_1,\ldots,x_{N_\nu}\in S^{N-1} $ such that
$$ \|x_i-x_j\|\geq\frac{1}{\sqrt{N}\nu}, $$
for $ i\neq j $.
\end{lem}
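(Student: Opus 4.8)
The plan is to argue by induction on the dimension $N$, constructing a configuration on $S^{N-1}$ from scaled copies of a configuration on the latitude sphere $S^{N-2}$ (a slice $\{\text{last coordinate}=s\}$ of $S^{N-1}$). The base case $N=1$ is trivial, since $S^{0}=\{-1,1\}$ and $\nu^{0}=1$, so a single point does the job. For the inductive step fix $N\ge2$ and $\nu\in\N$ (the case $\nu=1$ being again trivial). By the inductive hypothesis, applied with $N-1$ in place of $N$, there exist $\omega_{1},\dots,\omega_{\nu^{N-2}}\in S^{N-2}$ with $\|\omega_{k}-\omega_{l}\|\ge\frac{1}{\sqrt{N-1}\,\nu}$ for all $k\ne l$.

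Next I would choose real numbers $s_{1}<s_{2}<\dots<s_{\nu}$ inside the interval $[-1/\sqrt N,\,1/\sqrt N]$ with consecutive gaps $s_{i+1}-s_{i}\ge\frac{1}{\sqrt N\,\nu}$; this is possible because the total gap required, $(\nu-1)/(\sqrt N\,\nu)$, is smaller than the length $2/\sqrt N$ of that interval. Then set
$$
x_{i,k}=\Bigl(\sqrt{1-s_{i}^{2}}\;\omega_{k},\ s_{i}\Bigr)\in\R^{N-1}\times\R=\R^{N},\qquad 1\le i\le\nu,\ 1\le k\le\nu^{N-2}.
$$
Since $\|\omega_{k}\|=1$ one sees immediately that $\|x_{i,k}\|=1$, so these are $\nu\cdot\nu^{N-2}=\nu^{N-1}$ distinct points of $S^{N-1}$. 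Now I check the separation. If $i=i'$ and $k\ne k'$, the last coordinates cancel and $\|x_{i,k}-x_{i,k'}\|=\sqrt{1-s_{i}^{2}}\,\|\omega_{k}-\omega_{k'}\|$; since $|s_{i}|\le1/\sqrt N$ we have $\sqrt{1-s_{i}^{2}}\ge\sqrt{(N-1)/N}$, whence $\|x_{i,k}-x_{i,k'}\|\ge\sqrt{\tfrac{N-1}{N}}\cdot\frac{1}{\sqrt{N-1}\,\nu}=\frac{1}{\sqrt N\,\nu}$. If $i\ne i'$, then $\|x_{i,k}-x_{i',k'}\|$ is at least the absolute difference of the last coordinates, i.e.\ $|s_{i}-s_{i'}|\ge\frac{1}{\sqrt N\,\nu}$. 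This is exactly the desired bound and closes the induction.

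The only point requiring care — and the reason the latitudes $s_{i}$ are confined to the thin band $|s|\le1/\sqrt N$ rather than being spread over all of $[-1,1]$ — is that the scaling factor $\sqrt{1-s_{i}^{2}}$ contracts the within-slice distances near the poles; keeping $\sqrt{1-s_{i}^{2}}\ge\sqrt{(N-1)/N}$ is precisely what turns the inductive constant $\frac{1}{\sqrt{N-1}\,\nu}$ into $\frac{1}{\sqrt N\,\nu}$, and one has to verify simultaneously that this narrow band still comfortably holds $\nu$ adequately separated values (it does, by the elementary count above). So no serious obstacle arises; the whole content is this balancing of the two constraints against the constant $1/\sqrt N$. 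This is also why cruder constructions — radially projecting a grid on a face of the circumscribed cube, say — require more work to control the distortion and typically yield a weaker constant such as $1/N$.
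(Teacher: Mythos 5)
Your proof is correct: the induction is well-founded (the $N=1$ base case is vacuous with a single point), the latitude values $s_1<\dots<s_\nu$ fit in the band $[-1/\sqrt N,1/\sqrt N]$ by your count, and the two separation cases ($i=i'$ versus $i\ne i'$) are each handled with the right constant, the factor $\sqrt{(N-1)/N}$ exactly cancelling the inductive $1/\sqrt{N-1}$. However, the paper does something different and rather more economical: it takes the grid points $x_a'=\frac{1}{\sqrt N}\bigl(\frac{a_1}{\nu},\dots,\frac{a_{N-1}}{\nu}\bigr)$ with $a_i\in\{0,\dots,\nu-1\}$, which are $\nu^{N-1}$ points of $\R^{N-1}$ of norm $<1$ at mutual distance $\ge\frac{1}{\sqrt N\nu}$, and lifts them to the sphere as $x_a=(x_a',\sqrt{1-\|x_a'\|^2})$. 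The point your closing remark misses is that this is a \emph{vertical} lift (a graph map), not a radial projection: appending an extra coordinate can only increase Euclidean distances, so $\|x_a-x_b\|\ge\|x_a'-x_b'\|$ with no distortion to control and no loss in the constant. Your induction buys nothing extra here --- it reproves the same bound with more bookkeeping (a one-dimensional spacing argument plus a contraction estimate at each level) --- whereas the paper's one-step construction isolates the single relevant fact that the grid in a cube of side $1/\sqrt N$ already has the right cardinality and separation. Both arguments are valid; if you keep yours, you might still note that the expansiveness of the vertical lift gives the result without any balancing of constants.
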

\begin{proof}
Fix $ N\in\mathbb{N} $, $ N\geq 2 $, and take $ \nu\in\mathbb{N} $. For $ a=(a_1,\ldots,a_{N-1}) $,
with $ a_1,\ldots,a_{N-1}\in\{0,1,\ldots,\nu-1\} $, we define
$$ 
x_a'=\frac{1}{\sqrt{N}}\left(\frac{a_1}{\nu},\ldots,\frac{a_{N-1}}{\nu}\right)\in\R^{N-1}. 
$$
These are $ \nu^{N-1} $ points, and they satisfy
$$ \|x_a'-x_b'\|\geq\frac{1}{\sqrt{N}\nu}, $$
for every $ a\neq b $. Moreover, $ \|x_a'\|<1 $ for every $ a $.

Consider now
$$ 
x_a=(x_a',\sqrt{1-\|x_a'\|^2})\in S^{N-1},
$$
for $ a=(a_1,\ldots,a_{N-1}) $ with $ a_1,\ldots,a_{N-1}\in\{0,1,\ldots,\nu-1\} $. These are
$ \nu^{N-1} $ points on the sphere, and we have
$$ \|x_a-x_b\|\geq\|x_a'-x_b'\|\geq\frac{1}{\sqrt{N}\nu}, $$
for every $ a\neq b $.
\end{proof}

We will now use these results to build a sequence of Lipschitz functions which will yield the
contradiction we are looking for. Choose $ N=k $ in the last lemma and take $ \nu\in\mathbb{N} $.
Then we have $ N_\nu=\nu^{k-1} $ points $ x_1,\ldots,x_{N_\nu}\in S^{k-1} $ such that
$$ \|x_i-x_j\|\geq\frac{1}{\sqrt{k}\nu}, $$
for every $ i\neq j $. Let
$$ \lambda_\nu=4\sqrt{k}\nu; $$
note that $ \lambda_\nu\geq 1 $. Since $ \frac{2k-2}{k}\geq\frac{4}{3}>1 $, we can pick a number
$$ 1<p<\frac{2k-2}{k} $$
and define the function $ \psi_\nu:\sfe\rightarrow\mathbb{R} $,
$$ \psi_\nu=\frac{1}{\nu^p}\bigwedge_{i=1}^{N_\nu}w_{\lambda_\nu,\,x_i}. $$

From the $ k-$homogeneity of $\mu$, 
Lemma \ref{mu on a minimum of functions} (which can be applied
because the fact that $ \|x_i-x_j\|\geq\frac{1}{\sqrt{k}\nu}=\frac{4}{\lambda_\nu} $ allows us to use
Corollary \ref{distant points give orthogonal w's}) and Lemma \ref{values of mu}, we get
$$ \mu(\psi_\nu)=\frac{1}{\nu^{kp}}\mu\left(\bigwedge_{i=1}^{N_\nu}w_{\lambda_\nu,\,x_i}\right)=
\frac{1}{\nu^{kp}}\sum_{i=1}^{N_\nu}\mu(w_{\lambda_\nu,\,x_i})=-\frac{1}{\nu^{kp}}
\frac{\omega_{k-1}}{k}\lambda_\nu^{k-1}N_\nu=-c_k\nu^{2k-2-kp}, $$
where
$$ c_k=4^{k-1}\omega_{k-1}k^{\frac{k-3}{2}}>0. $$
Given how $ p $ was chosen, $ 2k-2-kp>0 $, hence
\begin{equation}\label{mu goes to infinity}
\mu(\psi_\nu)\rightarrow-\infty
\end{equation}
as $ \nu\rightarrow\infty $.

We would now like to prove that $ \psi_\nu\xrightarrow[\tau]{}\mathbb{O} $, as $ \nu\rightarrow
\infty $.

For every $ i=1,\ldots,N_\nu $ and $ (\xi,\eta)\in \sfe $, from the triangular and Cauchy-Schwarz 
inequalities we have
\begin{equation}\notag
\begin{split}
\vert w_{\lambda_\nu,\,x_i}(\xi,\eta)\vert &=\vert[\lambda_\nu u_{x_i}(\xi,\eta)-v_{x_i}(\xi,\eta)]
\wedge 0\vert\leq\vert\lambda_\nu u_{x_i}(\xi,\eta)-v_{x_i}(\xi,\eta)\vert\\
&=\vert\lambda_\nu\|\xi-\langle\xi,x_i\rangle x_i\|-\langle\xi,x_i\rangle\vert\leq\lambda_\nu
\left(\|\xi\|+\|\xi\|\cdot\|x_i\|^2\right)+\\
&+\|\xi\|\cdot\|x_i\|=(2\lambda_\nu+1)\|\xi\|\leq 2\lambda_\nu+1,
\end{split}
\end{equation}
since $ x_i\in S^{k-1} $. This yields $ \|w_{\lambda_\nu,\,x_i}\|_\infty\leq 2\lambda_\nu+1$ for
every $ i=1,\ldots,N_\nu $, and consequently
$$ 
\|\psi_\nu\|_\infty\leq\frac{2\lambda_\nu+1}{\nu^p}=\frac{8\sqrt{k}}{\nu^{p-1}}+
\frac{1}{\nu^p}.
$$
Since $ p>1 $, this implies that $ \psi_\nu\rightarrow\mathbb{O} $ uniformly on $ \sfe $ as
$ \nu\rightarrow\infty $.

We now look for a uniform bound on $ L(\psi_\nu) $, the Lipschitz constant of $ \psi_\nu $.
For $ i\in\{1,\ldots,N_\nu\} $, consider $ \widetilde{w}_{\lambda_\nu,\,x_i}=\lambda_\nu u_{x_i}-
v_{x_i} $. For $ (\xi_1,\eta_1),(\xi_2,\eta_2)\in \sfe $,
\begin{eqnarray*}
\vert\widetilde{w}_{\lambda_\nu,\,x_i}(\xi_1,\eta_1)&-&\widetilde{w}_{\lambda_\nu,\,x_i}(\xi_2,\eta_2)
\vert\leq\lambda_\nu\vert u_{x_i}(\xi_1,\eta_1)-u_{x_i}(\xi_2,\eta_2)\vert+\vert v_{x_i}(\xi_1,\eta_1)-
v_{x_i}(\xi_2,\eta_2)\vert\\
&=&\lambda_\nu\vert\|\xi_1-\langle\xi_1,x_i\rangle x_i\|-\|\xi_2-
\langle\xi_2,x_i\rangle x_i\|\vert+\vert\langle\xi_1-\xi_2,x_i\rangle\vert\\
&\leq&\lambda_\nu\|\xi_1-\xi_2-\langle\xi_1-\xi_2,x_i\rangle x_i\|+\vert\langle\xi_1-\xi_2,x_i\rangle\vert\\
&\leq&\lambda_\nu(\|\xi_1-\xi_2\|+\|\xi_1-\xi_2\|\cdot\|x_i\|^2)+\|\xi_1-\xi_2\|\cdot\|x_i\|\\
&=&(2\lambda_\nu+1)\|\xi_1-\xi_2\| \\
&\leq&(2\lambda_\nu+1)\|(\xi_1,\eta_1)-(\xi_2,\eta_2)\|. 
\end{eqnarray*}
This yields
$$ L(w_{\lambda_\nu,\,x_i})\leq L(\widetilde{w}_{\lambda_\nu,\,x_i})\leq 2\lambda_\nu+1. $$

Therefore, recalling that the Lipschitz constant of a finite minimum of functions is at most the
maximum of the Lipschitz constants, we get
$$
L(\psi_\nu)=L\left(
\frac1{\nu^p}\bigwedge_{i=1}^{N_\nu}w_{\lambda_\nu,x_i}\right)\le
\frac1{\nu^p}\max\{L(w_{\lambda_\nu,x_i})\colon i=1,\ldots,N_\nu\}\le
\frac{8\sqrt k \nu+1}{\nu^p}\le\frac{8\sqrt k +1}{\nu^{p-1}}.
$$
This, together with \eqref{bound on the gradient of a lipschitz function}, implies that
$$ \|\nabla\psi_\nu(x)\|\leq\frac{\sqrt{n}(8\sqrt{k}+1)}{\nu^{p-1}}, $$
for every $ \nu\in\mathbb{N} $ and $ H^{n-1}$-a.e. $ x\in \sfe $. The last inequality both tells us
that $ \nabla\psi_\nu\rightarrow 0 $ $ H^{n-1}$-a.e. in $ \sfe $, as $ \nu\rightarrow\infty $, and
that $ \|\nabla\psi_\nu\| $ is uniformly bounded by
$$ C=\sqrt{n}(8\sqrt{k}+1). $$

Therefore, $ \psi_\nu\xrightarrow[\tau]{}\mathbb{O} $ as $ \nu\rightarrow\infty $. Since $ \mu $ is
continuous, this gives $ \mu(\psi_\nu)\rightarrow\mu(\mathbb{O})=0 $, which is in contradiction with
\eqref{mu goes to infinity}. This concludes the proof of Proposition
\ref{homogeneous valuations on Lip}.
\end{proof}
\subsection{Proof of the characterisation result}
We are finally ready to prove Theorem \ref{characterization result}.
\begin{proof}[Proof of Theorem \ref{characterization result}]
Assume the functional $ \mu:\Lip\rightarrow\mathbb{R} $ to be defined by
\eqref{representation formula} for some constants $ c_0,c_1,c_2\in\mathbb{R} $, and write
$$ \mu(u)=\int_{\sfe}F(u,\|\nabla u\|)dH^{n-1}(x), $$
for $ u\in\Lip $, where $ F:\mathbb{R}\times[0,+\infty)\rightarrow\mathbb{R} $ is the $ C^\infty $
function given by
$$ F(x,y)=\frac{c_0}{H^{n-1}(\sfe)}+c_1x+c_2[(n-1)x^2-y^2], $$
for every $ (x,y)\in\mathbb{R}\times[0,+\infty) $.

To prove that $ \mu $ is a valuation, we take $ u,v\in\Lip $ and compute
\begin{eqnarray}\label{mu is a valuation}
\mu(u\vee v)+\mu(u\wedge v)&=&\int_{\sfe}F(u\vee v,\|\nabla(u\vee v)\|)dH^{n-1}(x)+\\
&&+\int_{\sfe}F(u\wedge v,\|\nabla(u\wedge v)\|)dH^{n-1}(x)\nonumber\\
&=&\int_UF(u,\|\nabla(u\vee v)\|)dH^{n-1}(x)+\int_VF(v,\|\nabla(u\vee v)\|)dH^{n-1}(x)+\nonumber\\
&&+\int_EF(u\vee v,\|\nabla(u\vee v)\|)dH^{n-1}(x)+\int_UF(v,\|\nabla(u\wedge v)\|)dH^{n-1}(x)+\nonumber\\
&&+\int_VF(u,\|\nabla(u\wedge v)\|)dH^{n-1}(x)+\int_EF(u\wedge v,\|\nabla(u\wedge v)\|)dH^{n-1}(x),\nonumber
\end{eqnarray}
where
$$ 
U=\left\{x\in \sfe:\;u(x)>v(x)\right\},\;  V=\left\{x\in \sfe:\;u(x)<v(x)\right\},\; E=\left\{x\in \sfe:
\;u(x)=v(x)\right\}. 
$$

Let $x\in\sfe$ be such that $u$, $v$, $u\vee v$ and $u\wedge v$ are differentiable at $x$. Then clearly
$$ 
\nabla(u\vee v)(x)=\begin{cases}\nabla u(x)\mbox{ if }x\in U,\\
                                                        \nabla v(x)\mbox{ if }x\in V,\\
\end{cases} 
\quad\mbox{and}\quad
\nabla(u\wedge v)(x)=\begin{cases}\nabla v(x)\mbox{ if }x\in U,\\
                                                             \nabla u(x)\mbox{ if }x\in V.\\
\end{cases} 
$$
On the other hand, if $u(x)=v(x)$ it is not hard to prove (see also \cite{Pagnini-Master Thesis}) that
$$
\nabla u(x)=\nabla v(x)=\nabla (u\vee v)(x)=\nabla (u\wedge v)(x).
$$
Hence we can reassemble the integrals in \eqref{mu is a valuation} so that
$$ 
\mu(u\vee v)+\mu(u\wedge v)=\int_{\sfe}F(u,\|\nabla u\|)dH^{n-1}(x)+\int_{\sfe}F(v,
\|\nabla v\|)dH^{n-1}(x)=\mu(u)+\mu(v). 
$$

We now prove that $ \mu $ is continuous. Let $ \{u_i\}\subseteq\Lip $ be such that $ u_i
\xrightarrow[\tau]{}u\in\Lip $. Then $ \|u_i-u\|_\infty\rightarrow 0 $, hence there exists $ I\in
\mathbb{N} $ such that $ \|u_i\|_\infty<\|u\|_\infty+1 $ for every $ i>I $. Set
$$ M=\max\{\|u_1\|_\infty,\ldots,\|u_I\|_\infty,\|u\|_\infty+1\}. $$
Because of the $ \tau-$convergence, there is also a $ C>0 $ such that
$$ \left(u_i(x),\|\nabla u_i(x)\|\right)\in K:=[-M,M]\times[0,C], $$
for every $ i\in\mathbb{N} $ and $ H^{n-1}$-a.e. $ x\in \sfe $. Let $ D=\max_{K}\vert F\vert $,
thus $ F\left(u_i,\|\nabla u_i\|\right)=F\vert_K\left(u_i,\|\nabla u_i\|\right) $ is dominated by the
constant function $ D $, which is integrable on $ \sfe $ since the sphere has finite measure. From
the Dominated Convergence Theorem we get
$$ \mu(u)=\int_{\sfe}F(u,\|\nabla u\|)dH^{n-1}(x)=\lim_{i\rightarrow\infty}\int_{\sfe}F(u_i,
\|\nabla u_i\|)dH^{n-1}(x)=\lim_{i\rightarrow\infty}\mu(u_i). $$

For what concerns rotation invariance, we have that for every $ u\in\Lip $ and $ \varphi\in
\O(n) $
\begin{equation}\notag
\begin{split}
\|\nabla(u\circ\varphi)(x)\|&=\sqrt{\|\nabla_e(u\circ\varphi)(x)\|^2-[(u\circ\varphi)(x)]^2}=
\sqrt{\Big\|\left(D\varphi(x)\right)^T\nabla_e u(\varphi(x))\Big\|^2-u(\varphi(x))^2}\\
&=\sqrt{\|\nabla_e u(\varphi(x))\|^2-u(\varphi(x))^2}=\|\nabla u(\varphi(x))\|,
\end{split}
\end{equation}
for a.e. $ x\in\sfe $, where we have used \eqref{connection between the gradients} and the fact
that the matrix $ (D\varphi(x))^T $, being orthogonal, preserves the norm. Therefore,
\begin{equation}\notag
\begin{split}
\mu(u\circ\varphi)&=\int_{\sfe}F(u(\varphi(x)),\|\nabla(u\circ\varphi)(x)\|)dH^{n-1}(x)\\
&=\int_{\sfe}F(u(\varphi(x)),\|\nabla u(\varphi(x))\|)dH^{n-1}(x)=\mu(u),
\end{split}
\end{equation}
where we have applied the change of variables $ y=\varphi(x) $.

It remains to be seen that $ \mu $ is dot product invariant. This can be proved with a direct
computation, but it is easier to show it via a trick which also gives us the chance to recall how some
intrinsic volumes can be written, something that will be useful during the second part of the proof too.
It is known that (see for instance \cite{Schneider}) for every $ K\in\mathcal{K}^n $, the intrinsic volumes $ V_0 $, $ V_1 $,
$ V_2 $ can be expressed as follows:
\begin{equation}\label{V_0}
V_0(K)=1,
\end{equation}
\begin{equation}\label{V_1}
V_1(K)=\frac{1}{\omega_{n-1}}\int_{\sfe}h_KdH^{n-1}(x),
\end{equation}
where $ \omega_{n-1} $ is the $ (n-1)-$dimensional Hausdorff measure of the unit $ (n-1)-$ball,
and, if $ K\in C^{2,+} $,
\begin{equation}\label{V_2}
V_2(K)=\int_{\sfe}h_K\cdot\textnormal{tr}\left(M(h_K)\right)dH^{n-1}(x),
\end{equation}
where $ \textnormal{tr}\left(M(h_K)\right) $ denotes the trace of the matrix $ M(h_K) $ given by
$$ M(h_K)=h_K\cdot\textnormal{Id}_{n-1}+(h_K^{ij}), $$
$ (h_K^{ij}) $ being the $ (n-1)\times(n-1) $ matrix of the second covariant derivatives of $ h_K $ with respect to a local orthonormal frame on the sphere. From \eqref{V_2} we get
\begin{eqnarray}\label{rewritten V_2}
V_2(K)&=&\int_{\sfe}h_K[(n-1)h_K+\Delta h_K]dH^{n-1}(x)\\
&=&\int_{\sfe}[(n-1)h_K^2+ h_K\textnormal{div}\left(\nabla h_K\right)]dH^{n-1}(x)\nonumber\\
&=&\int_{\sfe}\left[(n-1)h_K^2-\|\nabla h_K\|^2\right]dH^{n-1}(x),\nonumber
\end{eqnarray}
where the last equality follows from the Divergence Theorem (here $ \Delta $ denotes the spherical
Laplace operator). Therefore,
\begin{equation}\label{mu on H}
\mu(h_K)=c_0V_0(K)+c_1\omega_{n-1}V_1(K)+c_2V_2(K),
\end{equation}
for every convex body $ K\in C^{2,+} $.

For $ x\in\mathbb{R}^n $, consider the functional $ \mu_x:\Lip\rightarrow\mathbb{R} $ defined by
$ \mu_x(u)=\mu(u+\langle\cdot,x\rangle) $, for $ u\in\Lip $. This is still a continuous valuation on
$ \Lip $ and, because of \eqref{mu on H}, it satisfies
\begin{eqnarray*}
\mu_x(h_K)&=&\mu(h_K+\langle\cdot,x\rangle)\\
&=&\mu(h_{K+x})=c_0V_0(K+x)+c_1\omega_{n-1}V_1(K+x)+c_2V_2(K+x)\\
&=&c_0V_0(K)+c_1\omega_{n-1}V_1(K)+c_2V_2(K)=\mu(h_K),
\end{eqnarray*}
for every convex body $ K\in C^{2,+} $, since the intrinsic volumes are translation invariant.

Now, the integral in \eqref{V_2} only makes sense for support functions of $ C^{2,+} $ bodies, but its
rewritten form \eqref{rewritten V_2} is well defined for every support function $ h_K\in
\mathcal{H}(\sfe) $. Since $ C^{2,+} $ bodies are dense in $ \mathcal{K}^n $ with respect to the
Hausdorff metric, for an arbitrary $ h_K\in\mathcal{H}(\sfe) $ we can find a sequence $ \{h_{K_i}\}
\subseteq\mathcal{H}(\sfe) $ with $ \{K_i\}\subseteq C^{2,+} $ such that $ \|h_{K_i}-h_K\|_\infty
\rightarrow 0 $. Then we also have $ h_{K_i}\xrightarrow[\tau]{}h_K $ (see the proof of Lemma
\ref{continuity of a valuation on H}), and since $ \mu_x $ and $ \mu $ are continuous with respect
to $ \tau $ we get $ \mu_x(h_K)=\mu(h_K) $. From Proposition
\ref{valuations are determined by their values on support functions} it follows that they coincide on the
whole space $ \Lip $, hence $ \mu $ is dot product invariant.

Vice versa, let $ \mu:\Lip\rightarrow\mathbb{R} $ be a continuous, rotation invariant and dot
product invariant valuation. As we previously did, let us consider $ \nu:\mathcal{K}^n\rightarrow
\mathbb{R} $ defined by
$$ \nu(K)=\mu(h_K), $$
for $ K\in\mathcal{K}^n $, which is a translation and rotation invariant valuation that is continuous
with respect to the Hausdorff metric, because of Lemma \ref{moving valuations}. From Theorem
\ref{Hadwiger}, there are real constants $ c_0,c_1,\ldots,c_n $ such that
\begin{equation}\label{application of Hadwiger's theorem}
\mu(h_K)=\nu(K)=\sum_{i=0}^nc_iV_i(K),
\end{equation}
for every $ K\in\mathcal{K}^n $.

From Theorem \ref{McMullen on Lip}, there exist continuous and dot product invariant valuations
$$ 
\mu_0,\mu_1,\ldots,\mu_n:\Lip\rightarrow\mathbb{R} 
$$ 
such that $ \mu_i $ is $ i-$homogeneous, for $ i=0,1,\ldots,n $, and
$$ \mu(\lambda u)=\sum_{i=0}^n\lambda^i\mu_i(u), $$
for every $ \lambda>0 $ and $ u\in\Lip $. Moreover, if we go back to \eqref{second extension of mu_i} 
we deduce that the $ \mu_i$'s are rotation invariant too, since $ \mu $ is. Applying Proposition
\ref{homogeneous valuations on Lip} to $ \mu_i $, for $ i=3,\ldots,n $, we get
\begin{equation}\label{application of McMullen's theorem}
\mu(\lambda u)=\mu_0(u)+\lambda\mu_1(u)+\lambda^2\mu_2(u),
\end{equation}
for every $ \lambda>0 $ and $ u\in\Lip $.

Combining \eqref{application of Hadwiger's theorem} and \eqref{application of McMullen's theorem}
we have that, for every $ \lambda>0 $ and $ K\in\mathcal{K}^n $,
$$ \mu_0(h_K)+\lambda\mu_1(h_K)+\lambda^2\mu_2(h_K)=\mu(\lambda h_K)=\mu(h_{\lambda K})=
\sum_{i=0}^nc_iV_i(\lambda K)=\sum_{i=0}^nc_i\lambda^iV_i(K), $$
where the last equality follows from the $ i-$homogeneity of the $ i^{\textnormal{th}} $ intrinsic
volume. This implies $ \mu_0(h_K)=c_0V_0(K) $, $ \mu_1(h_K)=c_1V_1(K) $, $ \mu_2(h_K)=
c_2V_2(K) $ and $ c_3=\ldots=c_n=0 $. Therefore, taking $ \lambda=1 $, $ u=h_K $ in
\eqref{application of McMullen's theorem} and remembering \eqref{V_0}, \eqref{V_1},
\eqref{rewritten V_2}, we find
\begin{equation}
\mu(h_K)=c_0+c_1\int_{\sfe}h_KdH^{n-1}(x)+c_2\int_{\sfe}\left[(n-1)h_K^2-\|\nabla h_K\|^2
\right]dH^{n-1}(x),
\end{equation}
for every $ K\in C^{2,+} $, where we have renamed $ c_1:=c_1/\omega_{n-1} $. From the
first part of the proof, the functional $ \tilde{\mu}:\Lip\rightarrow\mathbb{R} $ defined by
$$ \tilde{\mu}(u)=c_0+c_1\int_{\sfe}udH^{n-1}(x)+c_2\int_{\sfe}\left[(n-1)u^2-\|\nabla u\|^2
\right]dH^{n-1}(x), $$
for $ u\in\Lip $, is a continuous valuation like $ \mu $, and they coincide on the set of support
functions of $ C^{2,+} $ bodies, hence on $ \mathcal{H}(\sfe) $, by density. We conclude the proof from
Proposition \ref{valuations are determined by their values on support functions}.
\end{proof}

\section{An improved version of Theorem \ref{McMullen on Lip}}

In this final section we refine Theorem \ref{McMullen on Lip} as follows. 

\begin{thm}\label{McMullen on Lip improved}
Let $n\ge3$ and $ \mu:\Lip\rightarrow\mathbb{R} $ be a continuous and dot product invariant
valuation. Then there exist continuous and dot product invariant valuations $ \mu_0,\ldots,\mu_{n-1}:
\Lip\rightarrow\mathbb{R} $ such that $ \mu_i $ is $ i-$homogeneous, for $ i=0,\ldots,n-1 $, and
\begin{equation}\label{McMullen decomposition formula on Lip}
\mu(\lambda u)=\sum_{i=0}^{n-1}\lambda^i\mu_i(u),
\end{equation}
for every $ u\in\Lip $ and $ \lambda>0 $.
\end{thm}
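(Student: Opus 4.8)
The plan is to derive Theorem \ref{McMullen on Lip improved} from Theorem \ref{McMullen on Lip} by showing that, in the decomposition provided there, the top-degree component vanishes identically. So, given a continuous and dot product invariant valuation $\mu\colon\Lip\to\R$, I would first apply Theorem \ref{McMullen on Lip} to obtain continuous, dot product invariant valuations $\mu_0,\dots,\mu_n\colon\Lip\to\R$, with $\mu_i$ being $i$-homogeneous, such that $\mu(\lambda u)=\sum_{i=0}^n\lambda^i\mu_i(u)$ for all $u\in\Lip$ and $\lambda>0$. It then suffices to prove that $\mu_n\equiv0$ on $\Lip$.

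To study $\mu_n$, I would pass to convex bodies via $\nu_n\colon\K^n\to\R$, $\nu_n(K)=\mu_n(h_K)$. By Lemma \ref{moving valuations}, $\nu_n$ is a translation invariant, $n$-homogeneous valuation which is continuous with respect to the Hausdorff metric. By the classical description of top-degree translation invariant valuations on $\K^n$ (see \cite{Schneider}), the space of continuous, translation invariant, $n$-homogeneous valuations on $\K^n$ is one-dimensional and spanned by the volume functional $V_n$; hence there is $c\in\R$ with $\mu_n(h_K)=\nu_n(K)=c\,V_n(K)$ for every $K\in\K^n$. Note that, in contrast with Proposition \ref{homogeneous valuations on Lip}, where rotation invariance was needed to invoke Hadwiger's theorem, here no rotation invariance of $\mu$ (nor of $\mu_n$) is required to reach this conclusion, precisely because we are in the maximal degree of homogeneity.

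If $c=0$, then $\mu_n\equiv0$ on $\mathcal H(\sfe)$, and since $\mu_n$ is a continuous valuation on $\Lip$, Proposition \ref{valuations are determined by their values on support functions} applied to $\mu_n$ and the identically zero valuation yields $\mu_n\equiv0$ on $\Lip$. Suppose, for contradiction, that $c\ne0$. Replacing $\mu_n$ by $\frac{1}{c}\mu_n$, which is again a continuous, dot product invariant, $n$-homogeneous valuation on $\Lip$, we may assume $\mu_n(h_K)=V_n(K)$ for every $K\in\K^n$; this is exactly the normalisation \eqref{mu on support functions} appearing in the proof of Proposition \ref{homogeneous valuations on Lip}, specialised to $k=n$. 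From that point onward, that proof does not use rotation invariance anywhere: the functions $u_{\overline\xi}$, $v_{\overline\xi}$, $w_{\lambda,\overline\xi}$ are written down explicitly, Lemma \ref{values of mu} and the subsequent geometric lemmas, Corollary \ref{distant points give orthogonal w's} and Lemma \ref{mu on a minimum of functions} rest only on the valuation and lattice properties, on $n$-homogeneity, on the identity $\mu_n(h_K)=V_n(K)$ and on elementary estimates, and the concluding contradiction (the functions $\psi_\nu$ satisfy $\psi_\nu\xrightarrow[\tau]{}\mathbb{O}$ while $\mu_n(\psi_\nu)\to-\infty$) uses only $n$-homogeneity and $\tau$-continuity of $\mu_n$. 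When $k=n$ the auxiliary variable $\eta$ is simply absent, so $x=\xi\in\R^n$ and $D_{\overline\xi}$ is an $(n-1)$-dimensional disk in $\R^n$; the inequality $\frac{2k-2}{k}>1$ required there becomes $\frac{2n-2}{n}>1$, which is exactly the hypothesis $n\ge3$. Thus, running that construction verbatim with $\mu$ replaced by $\mu_n$ and $k$ by $n$ produces a contradiction, forcing $c=0$ and hence $\mu_n\equiv0$. This gives $\mu(\lambda u)=\sum_{i=0}^{n-1}\lambda^i\mu_i(u)$ with $\mu_0,\dots,\mu_{n-1}$ continuous, dot product invariant and $\mu_i$ being $i$-homogeneous, as asserted.

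The one delicate point, and the main thing to verify carefully, is the claim highlighted above: that rotation invariance enters the proof of Proposition \ref{homogeneous valuations on Lip} only through the appeal to Hadwiger's theorem, and can therefore be dispensed with in the top-degree case, where Hadwiger's theorem is replaced by the one-dimensionality of the space of continuous, translation invariant, $n$-homogeneous valuations on $\K^n$. Everything else is bookkeeping: checking that the auxiliary constructions of that proof make sense with the $\eta$-coordinate removed, and that the hypothesis $n\ge3$ is precisely what the quantitative step of that proof needs.
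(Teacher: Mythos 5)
Your proof is correct, and it follows the paper's reduction up to the last step: apply Theorem \ref{McMullen on Lip}, then show $\mu_n\equiv0$ using the fact that continuous, translation invariant, $n$-homogeneous valuations on $\K^n$ are multiples of $V_n$ (Theorem \ref{volume theorem}). Where you diverge is in how that fact is exploited. The paper does not re-open the proof of Proposition \ref{homogeneous valuations on Lip}: from $\nu_n=cV_n$ it deduces that $\mu_n$ is rotation invariant on ${\mathcal H}(\sfe)$, and then upgrades this to rotation invariance on all of $\Lip$ by applying Proposition \ref{valuations are determined by their values on support functions} to the continuous valuation $u\mapsto\mu_n(u\circ\varphi)-\mu_n(u)$, which vanishes on ${\mathcal H}(\sfe)$; this makes $\mu_n$ a continuous, rotation invariant, dot product invariant, $n$-homogeneous valuation, so Proposition \ref{homogeneous valuations on Lip} applies as a black box. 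You instead bypass rotation invariance altogether by auditing the proof of that proposition and observing that rotation invariance enters only through the appeal to Hadwiger's theorem, which in top degree is replaced by Theorem \ref{volume theorem}. Your audit is accurate: Lemma \ref{values of mu}, the support-set estimates, Corollary \ref{distant points give orthogonal w's} and Lemma \ref{mu on a minimum of functions} rely only on the valuation and lattice properties, homogeneity, continuity and the normalisation $\mu(h_K)=V_k(K)$, and the case $k=n$ falls within the proposition's hypotheses precisely when $n\ge3$. The paper's version buys a shorter, modular argument that reuses the proposition verbatim; yours buys the sharper (and worth recording) observation that the top-degree vanishing requires no rotation invariance at all, at the cost of asking the reader to re-verify a long proof. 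Both are valid.
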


For the proof we will need the following result (see \cite{Schneider}, Theorem 6.4.8).

\begin{thm}\label{volume theorem}
Let $ \nu:\mathcal{K}^n\rightarrow\mathbb{R} $ be a continuous and translation invariant valuation
which is homogeneous of degree $ n $. Then there exists $ c\in\mathbb{R} $ such that $ \nu(K)=c
V_n(K) $, for every $ K\in\mathcal{K}^n $.
\end{thm}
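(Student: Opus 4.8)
The plan is to start from the decomposition already obtained in Theorem \ref{McMullen on Lip} and to show that its top-degree term is identically zero. Applying Theorem \ref{McMullen on Lip} to $\mu$ produces continuous and dot product invariant valuations $\mu_0,\dots,\mu_n\colon\Lip\to\R$, with $\mu_i$ being $i$-homogeneous, such that $\mu(\lambda u)=\sum_{i=0}^n\lambda^i\mu_i(u)$ for every $u\in\Lip$ and $\lambda>0$. Thus it suffices to prove that $\mu_n\equiv\mathbb{O}$ on $\Lip$; the statement of Theorem \ref{McMullen on Lip improved} then follows at once with the same valuations $\mu_0,\dots,\mu_{n-1}$.

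To study $\mu_n$ I would transfer it to convex bodies. Define $\nu_n\colon\mathcal{K}^n\to\R$ by $\nu_n(K)=\mu_n(h_K)$. By Lemma \ref{moving valuations}, $\nu_n$ is a valuation, continuous with respect to the Hausdorff metric, translation invariant (because $\mu_n$ is dot product invariant), and $n$-homogeneous. Theorem \ref{volume theorem} then furnishes a constant $c\in\R$ with $\nu_n(K)=cV_n(K)$, i.e. $\mu_n(h_K)=cV_n(K)$ for all $K\in\mathcal{K}^n$. If $c=0$, then $\mu_n\equiv\mathbb{O}$ on $\mathcal{H}(\sfe)$, and Proposition \ref{valuations are determined by their values on support functions} gives $\mu_n\equiv\mathbb{O}$ on $\Lip$, which is what we want.

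It remains to exclude the case $c\neq0$. After dividing $\mu_n$ by $c$ we may assume $\mu_n(h_K)=V_n(K)$ for every $K\in\mathcal{K}^n$. At this point I would invoke the construction carried out in the proof of Proposition \ref{homogeneous valuations on Lip} with $k=n$: the disks $D_{\overline\xi}$, the support functions $u_{\overline\xi}$, $v_{\overline\xi}$, the functions $w_{\lambda,\overline\xi}=(\lambda u_{\overline\xi}-v_{\overline\xi})\wedge\mathbb{O}$, the evaluation $\mu_n(w_{\lambda,\overline\xi})=-\frac{\omega_{n-1}}{n}\lambda^{n-1}$ of Lemma \ref{values of mu}, the orthogonality statements culminating in Corollary \ref{distant points give orthogonal w's}, the additivity of Lemma \ref{mu on a minimum of functions}, and finally the sequence $\psi_\nu=\nu^{-p}\bigwedge_{i=1}^{N_\nu}w_{\lambda_\nu,\,x_i}$, where (since $n\ge3$) one can still pick $p$ with $1<p<\frac{2n-2}{n}$. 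The crucial point is that, in that proof, rotation invariance is used \emph{only} to apply Hadwiger's theorem and conclude that $\nu=cV_k$ on support functions; here that very conclusion is supplied instead by Theorem \ref{volume theorem}. Hence all the remaining steps go through unchanged, yielding $\mu_n(\psi_\nu)=-c_n\,\nu^{\,2n-2-np}\to-\infty$ with $2n-2-np>0$, while $\psi_\nu\xrightarrow[\tau]{}\mathbb{O}$. Continuity of $\mu_n$ forces $\mu_n(\psi_\nu)\to\mu_n(\mathbb{O})=0$, a contradiction. Therefore $c=0$ and $\mu_n\equiv\mathbb{O}$.

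The main obstacle is not any fresh computation but the careful bookkeeping in the previous paragraph: one must verify that the entire apparatus of Proposition \ref{homogeneous valuations on Lip} (from Lemma \ref{values of mu} through the uniform bounds on $\|\psi_\nu\|_\infty$ and $L(\psi_\nu)$, hence on $\|\nabla\psi_\nu\|$) depends solely on $\mu_n$ being a continuous, $n$-homogeneous valuation that coincides with $V_n$ on $\mathcal{H}(\sfe)$, and in no way on rotation invariance. Once this is granted, the improved homogeneous decomposition is immediate: the term of degree $n$ in Theorem \ref{McMullen on Lip} vanishes identically, leaving $\mu(\lambda u)=\sum_{i=0}^{n-1}\lambda^i\mu_i(u)$.
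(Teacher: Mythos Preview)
Your proposal does not prove the stated Theorem~\ref{volume theorem}. That theorem is a classical result about valuations on $\mathcal{K}^n$ (every continuous, translation invariant, $n$-homogeneous valuation is a multiple of the volume), which the paper simply \emph{cites} from \cite{Schneider} without proof. Instead, you have written a proof of Theorem~\ref{McMullen on Lip improved}, \emph{using} Theorem~\ref{volume theorem} as an ingredient (you explicitly write ``Theorem~\ref{volume theorem} then furnishes a constant $c$''). As a proof of the statement actually listed, your proposal is therefore circular: it assumes what is to be shown.

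If your intended target was Theorem~\ref{McMullen on Lip improved}, then your argument is correct but takes a route that differs from the paper's. The paper applies Theorem~\ref{volume theorem} to get $\nu_n=cV_n$, observes that $V_n$ is rotation invariant, upgrades the rotation invariance of $\mu_n$ from $\mathcal{H}(\sfe)$ to all of $\Lip$ via Proposition~\ref{valuations are determined by their values on support functions}, and then feeds $\mu_n$ into Proposition~\ref{homogeneous valuations on Lip} (with $k=n$) as a black box. Your approach instead bypasses rotation invariance of $\mu_n$ altogether: you note (correctly) that the machinery inside the proof of Proposition~\ref{homogeneous valuations on Lip} never uses rotation invariance once one knows $\mu(h_K)=V_k(K)$, and you run that machinery directly on $\mu_n$ after Theorem~\ref{volume theorem} has delivered $\mu_n(h_K)=V_n(K)$. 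Your route is sound and arguably tidier (it avoids the auxiliary functional $\mu_n^\varphi$), but it requires opening up the proof of Proposition~\ref{homogeneous valuations on Lip} rather than quoting its statement; the paper's route keeps that proposition as a black box at the cost of one short extra step.
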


\bigskip

\noindent
{\em Proof of Theorem \ref{McMullen on Lip improved}.}
We use the notations introduced in the proof of Theorem \ref{McMullen on Lip}; by the latter result
we only need to prove that $ \mu_n\equiv 0 $. By Theorem \ref{volume theorem}, there exists
$ c\in\mathbb{R} $ such that $ \nu_n(K)=cV_n(K) $, for every $ K\in\mathcal{K}^n $. In particular,
$ \nu_n $ is rotation invariant, hence $ \mu_n $ is rotation invariant on $ \mathcal{H}(\sfe) $.

Let us prove that $ \mu_n $ is rotation invariant on the whole space $ \Lip $. For a fixed $ \varphi\in
\mathbf{O}(n) $, consider $ \mu_n^\varphi:\Lip\rightarrow\mathbb{R} $ defined by
$$ \mu_n^\varphi(u)=\mu_n(u\circ\varphi)-\mu_n(u), $$
for $ u\in\Lip $. Such functional is a continuous valuation on $ \Lip $; because $ \mu_n $ is rotation
invariant on $ \mathcal{H}(\sfe) $, $ \mu_n^\varphi=0 $ on $ \mathcal{H}(\sfe) $. From
Proposition \ref{valuations are determined by their values on support functions}, $ \mu_n^\varphi=0 $
on $ \Lip $, so that $ \mu_n(u\circ\varphi)=\mu_n(u) $, for every $ u\in\Lip $ and
$ \varphi\in\mathbf{O}(n) $. Therefore, $ \mu_n $ is a continuous, rotation invariant, dot product
invariant and $ n-$homogeneous valuation on $ \Lip $, hence $ \mu_n\equiv 0 $, thanks to
Proposition \ref{homogeneous valuations on Lip}.
\begin{flushright}
$\square$
\end{flushright}

\end{document}